\documentclass[12pt,american,english]{article}
\usepackage{mathpazo}
\usepackage[T1]{fontenc}
\usepackage[latin9]{inputenc}
\usepackage{babel}
\usepackage{enumitem}
\usepackage{amsmath}
\usepackage{amsthm}
\usepackage{amssymb}
\usepackage{stmaryrd}
\usepackage{stackrel}
\usepackage{setspace}
\usepackage{wasysym}
\usepackage[authoryear]{natbib}

\usepackage{bussproofs}
\usepackage{turnstile}
\usepackage{marvosym}
\usepackage{adjustbox}
\usepackage{titletoc}

\usepackage[unicode=true,pdfusetitle,
 bookmarks=true,bookmarksnumbered=true,bookmarksopen=true,bookmarksopenlevel=2,
 breaklinks=false,pdfborder={0 0 0},pdfborderstyle={},backref=false,colorlinks=false]
 {hyperref}

\makeatletter
\numberwithin{equation}{section}
\numberwithin{figure}{section}
\newlength{\lyxlabelwidth}      
	\newenvironment{elabeling}[2][]%
	{\settowidth{\lyxlabelwidth}{#2}
		\begin{description}[font=\normalfont,style=sameline,
			leftmargin=\lyxlabelwidth,#1]}
	{\end{description}}
\theoremstyle{plain}
\newtheorem{thm}{\protect\theoremname}
\theoremstyle{definition}
\newtheorem{defn}[thm]{\protect\definitionname}
\theoremstyle{remark}
\newtheorem{notation}[thm]{\protect\notationname}
\theoremstyle{plain}
\newtheorem{prop}[thm]{\protect\propositionname}
\theoremstyle{plain}
\newtheorem{lem}[thm]{\protect\lemmaname}

\hypersetup{pdfstartview={FitH}}

\usepackage[all]{nowidow}
\renewcommand{\makehor}[4]
    {\ifthenelse{\equal{#1}{n}}{\!\hstretch{2.3}{\raisebox{-3pt}{$\sim$}}}{}
     \ifthenelse{\equal{#1}{t}}{\setbox0=\hbox{$\sim$} \raisebox{-.6ex}{\hspace*{-.05ex}\adjustbox{width=#3,height=\height}{\clipbox{0.75 0 0 0}{\usebox0}}}}{}
     \ifthenelse{\equal{#1}{s}}{\rule[-0.5#2]{#3}{#2}}{}
     \ifthenelse{\equal{#1}{d}}{\setlength{\lengthvar}{#2}
            \addtolength{\lengthvar}{0.5#4}
            \rule[-\lengthvar]{#3}{#2}
            \hspace{-#3}
            \rule[0.5#4]{#3}{#2}}{}
    \ifthenelse{\equal{#1}{m}}{\setlength{\lengthvar}{1.5#2}
        \addtolength{\lengthvar}{#4}
        \rule[-\lengthvar]{#3}{#2}
        \hspace{-#3}
        \rule[-0.5#2]{#3}{#2}
        \hspace{-#3}
        \setlength{\lengthvar}{0.5#2}
        \addtolength{\lengthvar}{#4}
        \rule[\lengthvar]{#3}{#2}}{}}

\renewcommand{\turnstile}[6][s]
    {\ifthenelse{\equal{#1}{d}}
        {\sbox{\first}{$\displaystyle{#4}$}
        \sbox{\second}{$\displaystyle{#5}$}}{}
    \ifthenelse{\equal{#1}{t}}
        {\sbox{\first}{$\textstyle{#4}$}
        \sbox{\second}{$\textstyle{#5}$}}{}
    \ifthenelse{\equal{#1}{s}}
        {\sbox{\first}{$\scriptstyle{#4}$}
        \sbox{\second}{$\scriptstyle{#5}$}}{}
    \ifthenelse{\equal{#1}{ss}}
        {\sbox{\first}{$\scriptscriptstyle{#4}$}
        \sbox{\second}{$\scriptscriptstyle{#5}$}}{}
    \setlength{\dashthickness}{0.111ex}
    \setlength{\ddashthickness}{0.35ex}
    \setlength{\leasturnstilewidth}{0.8em}
    \setlength{\extrawidth}{0.2em}
    \ifthenelse{%
      \equal{#3}{n}}{\setlength{\tinyverdistance}{0ex}}{}
    \ifthenelse{%
      \equal{#3}{s}}{\setlength{\tinyverdistance}{0.5\dashthickness}}{}
    \ifthenelse{%
      \equal{#3}{d}}{\setlength{\tinyverdistance}{0.5\ddashthickness}
        \addtolength{\tinyverdistance}{\dashthickness}}{}
    \ifthenelse{%
      \equal{#3}{t}}{\setlength{\tinyverdistance}{1.5\dashthickness}
        \addtolength{\tinyverdistance}{\ddashthickness}}{}
        \setlength{\verdistance}{0.4ex}
        \settoheight{\lengthvar}{\usebox{\first}}
        \setlength{\raisedown}{-\lengthvar}
        \addtolength{\raisedown}{-\tinyverdistance}
        \addtolength{\raisedown}{-\verdistance}
        \settodepth{\raiseup}{\usebox{\second}}
        \addtolength{\raiseup}{\tinyverdistance}
        \addtolength{\raiseup}{\verdistance}
        \setlength{\lift}{0.8ex}
        \settowidth{\firstwidth}{\usebox{\first}}
        \settowidth{\secondwidth}{\usebox{\second}}
        \ifthenelse{\lengthtest{\firstwidth = 0ex}
            \and
            \lengthtest{\secondwidth = 0ex}}
                {\setlength{\turnstilewidth}{\leasturnstilewidth}}
                {\setlength{\turnstilewidth}{2\extrawidth}
        \ifthenelse{\lengthtest{\firstwidth < \secondwidth}}
            {\addtolength{\turnstilewidth}{\secondwidth}}
            {\addtolength{\turnstilewidth}{\firstwidth}}}
    \setlength{\turnstileheight}{2ex}
    \sbox{\turnstilebox}
    {\raisebox{\lift}{\ensuremath{
        \makever{#2}{\dashthickness}{\turnstileheight}{\ddashthickness}
        \makehor{#3}{\dashthickness}{\turnstilewidth}{\ddashthickness}
        \hspace{-\turnstilewidth}
        \raisebox{\raisedown}
        {\makebox[\turnstilewidth]{\usebox{\first}}}
            \hspace{-\turnstilewidth}
            \raisebox{\raiseup}
            {\makebox[\turnstilewidth]{\usebox{\second}}}
        \makever{#6}{\dashthickness}{\turnstileheight}{\ddashthickness}}}}
        \mathrel{\usebox{\turnstilebox}}}

\providecommand*{\cupdot}{%
  \mathbin{%
    \mathpalette\@cupdot{}%
  }%
}
\newcommand*{\@cupdot}[2]{%
  \ooalign{%
    $\m@th#1\cup$\cr
    \hidewidth$\m@th#1\cdot$\hidewidth
  }%
}

\makeatother

\addto\captionsamerican{\renewcommand{\definitionname}{Definition}}
\addto\captionsamerican{\renewcommand{\lemmaname}{Lemma}}
\addto\captionsamerican{\renewcommand{\notationname}{Notation}}
\addto\captionsamerican{\renewcommand{\propositionname}{Proposition}}
\addto\captionsamerican{\renewcommand{\theoremname}{Theorem}}
\addto\captionsenglish{\renewcommand{\definitionname}{Definition}}
\addto\captionsenglish{\renewcommand{\lemmaname}{Lemma}}
\addto\captionsenglish{\renewcommand{\notationname}{Notation}}
\addto\captionsenglish{\renewcommand{\propositionname}{Proposition}}
\addto\captionsenglish{\renewcommand{\theoremname}{Theorem}}
\providecommand{\definitionname}{Definition}
\providecommand{\lemmaname}{Lemma}
\providecommand{\notationname}{Notation}
\providecommand{\propositionname}{Proposition}
\providecommand{\theoremname}{Theorem}

\begin{document}

\global\long\def\fCenter{\proofsep}
	
	\global\long\def\binaryprimitive#1#2{\BinaryInf$#1\fCenter#2$}
	
	\global\long\def\axiomprimitive#1#2{\Axiom$#1\fCenter#2$}
	
	\global\long\def\unaryprimitive#1#2{\UnaryInf$#1\fCenter#2$}
	
	\global\long\def\trinaryprimitive#1#2{\TrinaryInf$#1\fCenter#2$}
	
	\global\long\def\bussproof#1{#1\DisplayProof}
	
	\global\long\def\binaryinfc#1#2#3#4{#1#2\RightLabel{\ensuremath{{\scriptstyle \textrm{#4}}}}\BinaryInfC{\ensuremath{#3}}}
	
	\global\long\def\trinaryinfc#1#2#3#4#5{#1#2#3\RightLabel{\ensuremath{{\scriptstyle \textrm{#5}}}}\TrinaryInfC{\ensuremath{#4}}}
	
	\global\long\def\unaryinfc#1#2#3{#1\RightLabel{\ensuremath{{\scriptstyle \textrm{#3}}}}\UnaryInfC{\ensuremath{#2}}}
	
	\global\long\def\axiomc#1{\AxiomC{\ensuremath{#1}}}
	
	\global\long\def\binaryinf#1#2#3#4#5{#1#2\RightLabel{\ensuremath{{\scriptstyle \textrm{#5}}}}\binaryprimitive{#3}{#4}}
	
	\global\long\def\trinaryinf#1#2#3#4#5#6{#1#2#3\RightLabel{\ensuremath{{\scriptstyle \textrm{#6}}}}\trinaryprimitive{#4}{#5}}
	
	\global\long\def\unaryinf#1#2#3#4{#1\RightLabel{\ensuremath{{\scriptstyle \textrm{#4}}}}\unaryprimitive{#2}{#3}}
	
	\global\long\def\axiom#1#2{\axiomprimitive{#1}{#2}}
	
	\global\long\def\axrulesp#1#2#3#4{\bussproof{\unaryinf{\axiomc{\vphantom{#4}}}{#1}{#2}{#3}}}
	
	\global\long\def\axrule#1#2#3{\axrulesp{#1}{#2}{#3}{\proofsep\Gamma,\Delta}}
	
	\global\long\def\unrule#1#2#3#4#5{\bussproof{\unaryinf{\axiom{#1}{#2}}{#3}{#4}{#5}}}
	
	\global\long\def\unrulec#1#2#3{\bussproof{\unaryinfc{\axiomc{#1}}{#2}{#3}}}
	
	\global\long\def\binrule#1#2#3#4#5#6#7{\bussproof{\binaryinf{\axiom{#1}{#2}}{\axiom{#3}{#4}}{#5}{#6}{#7}}}

\global\long\def\proofsep{\Rightarrow}

\title{First-Order Implication-Space Semantics}
\author{Ulf Hlobil}
\date{\today}

\maketitle

\begin{abstract}
\noindent This paper extends implication-space semantics to include
first-order quantification. Implication-space semantics has recently
been introduced as an inferentialist formal semantics that can capture
nonmonotonic and nontransitive material inferences. Extant versions,
however, include only propositional logic. This paper extends the
framework so as to recover classical first-order logic. The goal is
to formulate a theory in which consequence relations can be nonmonotonic
and supraclassical, while obeying the deduction-detachment theorem
and disjunction simplification, while also including conjunctions
that behave multiplicatively as premises and counterexamples to the
usual quantifier rules. The paper explains these constraints and shows
how they can be met jointly. The result is a first-order version of
implication-space semantics that has all the virtues for which inferentialists
and inferential expressivists praise propositional implication-space
semantics.

\emph{Keywords}:  implication-space semantics; inferentialism;
substructural logic; defeasible reasoning; material inferences; logical
expressivism
\end{abstract}

\section{Introduction}

Implication-space semantics has recently been developed as a novel
kind of inferentialist formal semantics \citep{Kaplan2022,Hlobil2024-HLORFL}.
It is inspired by, and (in some respects) similar to, Girard's \citeyearpar{GIRARD19871}
phase-space semantics for linear logic. Unlike Girard's phase-space
semantics, however, implication-space semantics can model nontransitive
and nonmonotonic material inferences. Brandom calls implication-space
semantics ``the current state of the art in inferentialist semantics''
\citep[18]{Hlobil2024-HLORFL}. And Peregrin has used a version of
this semantics to argue that inferential roles are compositional \citep{Peregrin2025}.
Classical propositional logic can be recovered in implication-space
semantics, as can the propositional fragments of Priest's Logic of
Paradox ($\mathsf{LP}$), strong Kleene logic ($\mathsf{K3}$), strict/tolerant
logic ($\mathsf{ST}$), and, with some adjustments, multiplicative-additive
linear logic ($\mathsf{MALL}$). There is, however, no extant account
of quantification in implication-space semantics. The aim of this
paper is to fill this lacuna.

The paper is structured as follows: In Section~2, I will provide
some background and explain my aim. This aim is to formulate a semantics
that can validate constraints that I label MOF, SCL, DDT, DS, and
LC without underwriting the usual quantifier rules $\forall R$ and
$\forall L$ that I will state below. In Section~3, I formulate first-order
implication-space semantics. I show that this version of implications
space semantics can validate MOF, DDT, DS, LC and need not underwrite
the rules $\forall R$ and $\forall L$. The goal of the remainder
of the paper is to show that SCL can also be met and, hence, that
classical first-order logic can be recovered in implication-space
semantics. That is the job of Section~4, in which I first put Tarskian
model theory on the table and then map it into implication-space semantics,
thus showing how to ensure that the resulting consequence relations
are supraclassical. Section~5 concludes.

\section{Background and Aim}

Since implication-space semantics is probably unfamiliar to most readers,
I start, in this section, by explaining some of the goals and challenges.
This will yield seven constraints that first-order implication-space
semantics ought to satisfy.

The goal of implication-space semantics is to provide an account of
the kind of implications that we exploit in defeasible, material (i.e.,
not logically valid) reasoning. Implications of this sort are common
not only in everyday reasoning but also in highly specialized contexts
like medical diagnosis and legal reasoning, as the following (admittedly
simplistic) examples illustrate.

\begin{elabeling}{00.00.0000}
\item [{Inf-1:}] NN is a middle-aged man with chest pain radiating to the
left arm. So, NN probably has a heart attack.
\item [{Inf-2:}] NN is a middle-aged man with chest pain radiating to the
left arm. NN's ECG and cardiac enzymes are normal, and the pain worsens
with inspiration and movement. So, NN probably has a heart attack.
\item [{Inf-3:}] A signed contract exists, according to which NN must pay
MM \$1000. So, NN must pay MM \$1000.
\item [{Inf-4:}] A signed contract exists, according to which NN must pay
MM \$1000. This contract was signed by NN under duress. So, NN must
pay MM \$1000.
\end{elabeling}

Plausibly, Inf-1 is a good inference but Inf-2 is not. And Inf-3 is
a good inference but Inf-4 is not. So, Inf-1 and Inf-3 are defeasible,
materially good inferences. What I mean by ``good inference'' here
is that the conclusion follows from the premises in some suitably
broad way, which is not always a matter of logical form. This implication
relation is not classical logical consequence and it is nonmonotonic.
Such examples can easily be multiplied, and they arise in almost all
areas of discourse, with some exceptions like mathematics. Inferentialists
like Brandom \citeyearpar{Brandom1994,Brandom2000,Brandom2008} hold
that at least some implications of this sort are content determining.
Hence, an inferentialist formal semantics must be able to capture
such implications. This is the goal of implication-space semantics.

Of course, the goodness of Inf-1 and Inf-3 are a medical and a legal
issue, respectively, and cannot be decided by any formal theory. The
goal of implication-space semantics is not to decide whether such
material implications hold but merely to provide a formal framework
in which such implications can be included (by fiat) together with
an account of logical vocabulary. The resulting theory should account
systematically for how logical and nonlogical expressions combine
into good implications by assigning contents to expressions, where
these contents are thought of as roles in such implication relations.

This goal motivates a first constraint on implication-space semantics:
it must be nonmonotonic. I will take the relevant kind of implication
to be a relation between sets of sentences,\footnote{Some have argued that inferentialists should take conclusions to be
individual sentences and not sets \citep{Steinberger2011-STEWCS-2}.
I disagree but I won't engage with this issue here for two reasons:
(i) advocates of implication-space semantics are comfortable with
a multiple-conclusion setting, and (ii) we can take the fragment of
the resulting consequence relation with just one conclusion as the
``real'' consequence relation, as I am here interested in the extension
of the consequence relation and not in any rules that might generate
that relation.} and I will express it by ``$\sttstile{}{}$'' using upper case
Greek letters for (schematic) sets of sentences and lower case Greek
letters for (schematic) sentences. We can hence formulate our first
constraint thus:

\begin{itemize}
\item \emph{Monotonicity Failures} (MOF): It can happen that $\Gamma\sttstile{}{}\Delta$
but not $\Gamma,\phi\sttstile{}{}\Delta$.
\end{itemize}

In propositional implication-space semantics, it is easy to validate
all inferences that are valid in classical propositional logic. I
will treat it as a second constraint that my version of implication-space
semantics can, similarly, validate classical first-order logic.

\begin{itemize}
\item \emph{Supraclassicality} (SCL): If $\Gamma\vdash\Delta$ holds in
classical logic, then $\Gamma\sttstile{}{}\Delta$.
\end{itemize}

In addition to these two constraints, advocates of implication-space
semantics usually endorse logical expressivism in addition to inferentialism
\citep{Brandom2000,Hlobil2024-HLORFL}. In particular, they think
that the conditional makes explicit what follows from what, so that
``If $\phi$, then $\psi$'' is implied by some premises $\Gamma$
just in case $\Gamma$ together with $\phi$ implies $\psi$. Similarly,
they hold that it something follows from a disjunction it must follow
from the disjuncts separately, and that the use of conjuncts as premises
amounts to the same as using the conjuncts as separate premises. This
gives us three further constraints:\footnote{As inferentialists, advocates of implication-space semantics typically
also have specific views about the inferential role of conditionals
as premises and the role of disjunctions and conjunctions as conclusions.
Since the connectives I will define below will meet these other desiderata
as well and the constraints here are useful to highlight why my aim
is not trivial, I do not mention this here.}

\begin{itemize}
\item \emph{Deduction-Detachment Theorem }(DDT): $\Gamma\sttstile{}{}\phi\rightarrow\psi,\Delta$
iff $\Gamma,\phi\sttstile{}{}\psi,\Delta$.
\item \emph{Disjunction Simplification }(DS): If $\Gamma,\phi\lor\psi\sttstile{}{}\Delta$,
then $\Gamma,\phi\sttstile{}{}\Delta$ and $\Gamma,\psi\sttstile{}{}\Delta$.
\item \emph{Left Conjunction }(LC): $\Gamma,\phi\wedge\psi\sttstile{}{}\Delta$
iff $\Gamma,\phi,\psi\sttstile{}{}\Delta$.
\end{itemize}

These constraints have some intuitive plausibility. Regarding DS,
for instance, if you argue from ``It is either Tuesday or Wednesday''
to ``The post office is open,'' it seems like an appropriate response
to say: ``No, the post office is closed on Wednesdays.'' And, regarding
LC, it would be implausible to take ``Tweety is a penguin'' to defeat
the inference from ``Tweety is a bird'' to ``Tweety can fly''
while holding that ``Tweety is a bird and a penguin; so Tweety can
fly'' is a good inference. However, it is not my goal to justify
these constraints; I merely state and accept them as constraints on
my project here.

These constraints have at least three important consequences. First,
SCL implies that the failures of monotonicity that implication-space
semantics aims to capture are \emph{not} the kind of monotonicity
failures that arise in relevance logics or the like. They are more
like the failures of monotonicity that default logics and preferential
logics aim to capture. Second, MOF, SCL, and DDT jointly imply that
multiplicative Cut fails in implication-space semantics. For suppose,
by MOF, that $\Gamma\sttstile{}{}\psi,\Delta$ and not $\Gamma,\phi\sttstile{}{}\psi,\Delta$.
Now, SCL implies $\psi\sttstile{}{}\phi\rightarrow\psi$. By multiplicative
Cut, $\Gamma\sttstile{}{}\phi\rightarrow\psi,\Delta$. And DDT then
yields $\Gamma,\phi\sttstile{}{}\psi,\Delta$, contradicting our assumption
\citep{Arieli2000}. Hence, implication-space semantics must be able
to accommodate nontransitive consequence relations. Third, MOF, LC,
and DS jointly imply that implication-space semantics is hyperintensional
\citep{Fine1975-FINCN-2}. For suppose, by MOF, that $\Gamma,\psi\sttstile{}{}\Delta$
and not $\Gamma,\psi,\phi\sttstile{}{}\Delta$. Now, $\psi$ is classically
equivalent to $\psi\wedge(\phi\lor\neg\phi)$; so that, in a non-hyperintensional
setting, these sentences can be substituted for each other without
affecting implications. Hence, $\Gamma,\psi\wedge(\phi\lor\neg\phi)\sttstile{}{}\Delta$.
But then, by LC, $\Gamma,\psi,(\phi\lor\neg\phi)\sttstile{}{}\Delta$;
and DS then yields $\Gamma,\psi,\phi\sttstile{}{}\Delta$, which contradicts
our assumption.

This means that no merely intensional theory like standard possible
world semantics can meet our constraints. Nor can we turn to subclassical
logics like relevance or linear logic to allow for failures of monotonicity.
Nonmonotonic logics that are supraclassical, like the most familiar
versions of default logic or preferential logics are usually, firstly,
transitive and hence fail DDT and, secondly, not hyperintensional
and hence fail either DS or LC. Finally, the best know nontransitive
logic, namely strict/tolerant logic ($\mathsf{ST}$), is monotonic
\citep{Cobreros2012}.\footnote{I do not claim that there is no way to adapt some of these logics
to meet the constraints above but merely that doing so will require
substantial and nontrivial adjustments. And implication-space semantics
can be seen as an adjustment of strict/tolerant logic, of truth-maker
semantics, or of phase-space semantics. I will not investigate such
issues here.} So giving an account of consequence relations that meet the five
constraints above is nontrivial. Extant versions of implication-space
semantics meet these constraints for propositional logic \citep{Kaplan2022,Hlobil2024-HLORFL}.
It is an open problem, however, whether---and if so, how---quantifiers
can be added to implication-space semantics, while meeting the constraints
above.

Extending implication-space semantics to a first-order language is
nontrivial. To see this, note that the usual proof rules for quantifiers
are implausible once we allow for defeasible material inferences.
Here are sequent calculus versions of these usual rules for the universal
quantifier,\footnote{I will treat the existential quantifier as defined by the dual of
the universal quantifier throughout this paper. Hence, the issues
that I am discussing have dual versions for the existential quantifier.} where we require that $y$ does not occur free in the bottom sequents
of these rules and where $[y/x]\phi$ denotes the sentence that is
like $\phi$ except that $x$ is replaced by $y$ in all free occurrences.

\medskip{}
\[
\bussproof{\unaryinf{\axiom{\Gamma,[t/x]\phi}{\Delta}}{\Gamma,\forall x\phi}{\Delta}{{\ensuremath{\forall}}L}}\qquad\bussproof{\unaryinf{\axiom{\Gamma}{[y/x]\phi,\Delta}}{\Gamma}{\forall x\phi,\Delta}{{\ensuremath{\forall}}R}}
\]

\medskip{}

One problem with the rule $\forall R$ is that it contains open formulae
since $y$ occurs free.\footnote{Unless, of course, the rule is used in a degenerate case in which
the quantifier in the bottom sequent binds a variable that doesn't
occur in its scope.} However, we don't use open formulae in natural language. Therefore
the rule cannot be applied in any straightforward way to the ordinary
language reasoning that is the target of implication-space semantics.
One option for adjusting the rule is to use a name that does not occur
in the bottom sequent instead of the variable $y$. Let us consider
a thus adjusted version of $\forall R$.

If we allow for defeasible inferences that may be good inferences
without explicitly including premises that state background assumptions,
then Inf-5 is plausibly good and, nevertheless, Inf-6 is not good.

\begin{elabeling}{00.00.0000}
\item [{Inf-5:}] \emph{Romeo and Juliette} is one of the greatest works
of English literature. So, Shakespeare is an important English author. 
\item [{Inf-6:}] \emph{Romeo and Juliette} is one of the greatest works
of English literature. So, everyone is an important English author.
\end{elabeling}

Of course, if we add as a premise that Shakespeare is the author of
\emph{Romeo and Juliette}, then the move from Inf-5 to Inf-6 violates
the constraint that the name into whose position we quantify must
not occur in the bottom sequent of our adjusted version of $\forall R$.
If we require, however, that this premise be added to Inf-5 to make
it a genuinely good inference, then this raises the question what
the general requirement for adding premises in this way is. If we
want to allow that defeasible inferences may be good inferences, then
we cannot require that our premises make the inference logically valid.
It is thus unclear how we could fix this rule so as to apply to defeasible
material inferences in a way that is not \emph{ad hoc}.

The rule $\forall L$ gives rise to a different problem. Since it
is unusual for empty beer bottles to be in fridges, Inf-7 is plausibly
a good inference; but Inf-8 is not a good inference

\begin{elabeling}{00.00.0000}
\item [{Inf-7:}] There are two beer bottles in the fridge, namely Bottle-1
and Bottle-2. If Bottle-1 is a bottle in the fridge, then Bottle-1
is empty. So, there is still beer in the fridge.
\item [{Inf-8:}] There are two beer bottles in the fridge, namely Bottle-1
and Bottle-2. For everything, if it is a bottle in the fridge, then
it is empty. So, there is still beer in the fridge.
\end{elabeling}

This example shows that, contrary to $\forall L$, in our ordinary
material and defeasible reasoning, an instance of a universal generalization
can imply a conclusion, while the universal generalization does not
imply that conclusion. The explanation is clear: the universal generalization
makes a claim about more cases than the particular instance, and the
inference at hand can be defeated by one of the cases that the universal
generalization affirms. In our case, $\forall L$ allows us to achieve
the same effect as weakening Inf-2 with the claim that Bottle-2 is
empty; but that additional premise defeats the inference.

The upshot of all this is that an extension of implication-space semantics
to include quantifiers must provide a system that meets the five constraints
above and that does not use semantic clauses for the universal quantifier
that underwrite rules like $\forall R$ and $\forall L$, so that
Inf-5 and Inf-7 may come out good while Inf-6 and Inf-8 do not come
out good, according to the system. The goal of this paper is to provide
such a system.

Of course, some readers may want to claim that logic is not meant
to apply to pieces of natural language reasoning like those above.
Note, however, that the inferentialist theory that has led to the
development of implication-space semantics clearly aspires to cover
such defeasible natural language reasoning. I will not offer any defense
of this aspiration, nor will I argue for the constraints above. My
goal is to solve the problem within the constraints just presented,
whether or not these constraints are ultimately the philosophically
correct constraints to impose.

My aim in this paper is thus to formulate a semantics that meets the
constraints MOF, SCL, DDT, DS, and LC while also tolerating counterexamples
to $\forall R$ and $\forall L$. In the next section, I will formulate
such a theory and show that it can meet MOF, DDT, DS, and LC while
also tolerating counterexamples to $\forall R$ and $\forall L$.
The remainder of the paper will then be dedicated to showing that
the theory can also meet SCL.

\section{First-Order Implication-Space Semantics}

In this section, I will present a version of implication-space semantics
for first-order languages. In the first subsection, I explain implication
frames, which I then use in the second subsection to define models
and consequence in sets of models. In the third subsection, I show
that this version of implication-space semantics satisfies MOF, DDT,
DS, and LC in the same way as extant versions of implication-space
semantics for propositional languages satisfy these constraints. Moreover,
it does not underwrite $\forall R$ and $\forall L$.

\subsection{First-Order Implication Spaces}

As in propositional implication-space semantics \citep{Hlobil2024-HLORFL},
we start with the bearers of implicational roles (``bearers'' for
short), from which we will then abstract those implicational roles.
Unlike in propositional implication-space semantics, however, these
bearers are composites. We think of them as composed of properties
and (sequences of) objects.\footnote{In contrast to Tarskian model theory, even an implication-space model
in which the set of objects is empty can be a counterexample to an
inference.} We thus start with:

\begin{itemize}
\item a set, $\mathbb{P}$, of properties, with $\mathbb{P}^{n}$ being
the set of \emph{n}-ary properties,
\item a set, $\mathbb{O}$, of objects, with $\mathbb{O}^{n}$ being the
set of all sequences of length $n$.
\end{itemize}

\begin{defn}[Bearers, $\mathbb{B}$]
 The set of bearers, $\mathbb{B}$, is the set of pairs $\left\langle \mathsf{P},\overrightarrow{\mathsf{o}}\right\rangle $,
also written $\mathsf{P}\overrightarrow{\mathsf{o}}$, for all $\mathsf{P}\in\mathbb{P}^{n}$
and $\overrightarrow{\mathsf{o}}\in\mathbb{O}^{n}$.
\end{defn}

Since implication-space semantics is motivated by inferentialism and
inferentialism is sometimes viewed as hostile to appeals to what language
represents, an opponent might worry that my starting point is in tension
with the inferentialist motivation behind implication-space semantics.
I am setting such philosophical issues aside here,\footnote{Notice that we can think of properties as predicates and objects as
names. Then the set of bearers will be the set of atomic sentences
formed from these predicates and names. Inferentialists may then take
the implication relation over these bearers to be instituted by social
norms. Nothing in what I say precludes this approach. Such philosophical
issues are, however, orthogonal to my goals in this paper.} and I will focus on the task of formulating a version of implication-space
semantics that meets the constraints set out in the introduction.

Once we have bearers of implicational roles, we can proceed in the
same way as in propositional implication-space semantics \citep{Hlobil2024-HLORFL},
up to the point where we will need a semantic clause for the universal
quantifier. An implication space is a set of all candidate implications
among sets of bearers of implicational roles.

\begin{defn}[Implication space, $\mathbb{S}$]
 $\mathbb{S}=\mathcal{P}(\mathbb{B})\times\mathcal{P}(\mathbb{B})$.
\end{defn}

An implication frame is a pair of bearers and an implication relation
among sets of these bearers. Intuitively, the implications in this
set are the good implications.\footnote{The role of this partition of all candidate implications into the
good and the bad ones is broadly similar to the role of assigning
truth-values to atomic sentences in a classical framework. Implication
frames are in implication-space semantics what phase spaces are in
Girard's \citeyearpar{GIRARD19871} phase-space semantics.}

\begin{defn}[Implication and implication frames,  $\left\langle \mathbb{B},\mathbb{I}\right\rangle $]
 Implication is a relation among sets of bearers, $\mathbb{I}\subseteq\mathbb{S}$.
An implication space is a pair $\left\langle \mathbb{B},\mathbb{I}\right\rangle $
of set of bearers, $\mathbb{B}$, and an implication relation among
sets of them.
\end{defn}

One can think of good implications philosophically in different ways.
One option is to think of bearers as similar to states of affairs
and to think of the bearers in the first element of the pair as being
made the case or obtaining and the bearers in the second element as
being made not the case or failing to obtain. Then an implication
is good if it is impossible that all the states of affairs in the
first set obtain and all the states of affairs in the second set do
not obtain. Alternatively, one can think of implications in normative-pragmatic
terms and take bearers to be sentences. In this case, one can adopt
a bilateralist idea and say that an implication holds just in case
it is normatively out of bounds to assert all the bearers in the first
set and deny all the bearers in the second set \citep[see][]{Restall2005-RESMC}.
I am setting such philosophical issues aside here.
\begin{notation}
To reduce clutter, we adopt the following notational conventions:
\begin{itemize}
\item Bearers and sets thereof (sans serif): $\mathsf{a,b,c,...A,B,C,D,...,G,...}$
\item Sets of candidate implications (sans serif): $\mathsf{H,I,J},...$ 
\item Roles and contents and sets thereof (type writer): $\mathtt{a,b,c,...A,B,C,...,G,D,...}$
\item Sentences and sets thereof (Greek letters): $\phi,\psi,...$ $\Gamma,\Delta,...$
\end{itemize}
\end{notation}

The range of subjunctive robustness of a set of candidate implications
is the set of candidate implications such that each one of them yields
a good implication when combined, by pairwise union, with any implication
in the set.

\begin{defn}[Range of subjunctive robustness, $\mathsf{RSR}(\cdotp)$]
 If $\mathsf{H}\subseteq\mathbb{S}$, then $\mathsf{RSR}(\mathsf{H})=$
$\{\left\langle x,y\right\rangle \mid\forall\left\langle \mathsf{G},\mathsf{D}\right\rangle \in\mathsf{H}\thinspace(\left\langle \mathsf{G}\cup x,\mathsf{D}\cup y\right\rangle \in\mathbb{I})\}$.
\end{defn}

We now define implicational roles in terms of ranges of subjunctive
robustness. Bearers, candidate implications, and sets thereof share
their implicational roles just in case they have the same range of
subjunctive robustness. Thus, implicational roles are equivalence
classes, with the equivalence relation being sameness of the range
of subjunctive robustness.

\begin{defn}[Implicational Role, $\mathcal{R}(\cdot)$]
 Bearers, candidate implications, and sets thereof have the following
implicational roles, written $\mathcal{R}(\cdot)$:
\begin{elabeling}{00.0}
\item [{(1)}] If $\mathsf{H}\subseteq\mathbb{S}$, then $\mathcal{R}(\mathsf{H})$
$=\{x\mid\mathsf{RSR}(\mathsf{H})=\mathsf{RSR}(x)\}$.\vspace{-6pt}
\item [{(2)}] If $\mathsf{A}\in\mathbb{S}$, then $\mathcal{R}(\mathsf{A})$
$=\mathcal{R}(\{\mathsf{A}\})$.\vspace{-6pt}
\item [{(3)}] If $\mathsf{a}\in\mathbb{B}$, then $\mathcal{R}(\mathsf{a})=$
$\left\langle \mathcal{R}^{+}(\mathsf{a}),\mathcal{R}^{-}(\mathsf{a})\right\rangle $,
where $\mathcal{R}^{+}(\mathsf{a})$ $=\mathcal{R}\left\langle \{\mathsf{a}\},\emptyset\right\rangle $
is the premisory role of $\mathsf{a}$ and $\mathcal{R}^{-}(\mathsf{a})$
$=\mathcal{R}\left\langle \emptyset,\{\mathsf{a}\}\right\rangle $
is the conclusory role of $\mathsf{a}$.
\end{elabeling}
\end{defn}

\noindent By the notational convention above, we use typewriter font
for roles; so: $\mathtt{P}\overrightarrow{\mathtt{o}}=\mathcal{R}(\mathsf{P}\overrightarrow{\mathsf{o}})$,
$\mathtt{P}\overrightarrow{\mathtt{o}}^{+}=\mathcal{R}^{+}(\mathsf{P}\overrightarrow{\mathsf{o}})$,
$\mathtt{P}\overrightarrow{\mathtt{o}}^{-}=\mathcal{R}^{-}(\mathsf{P}\overrightarrow{\mathsf{o}})$,
$\mathtt{H}=\mathcal{R}(\mathsf{H})$, etc.

Notice that $\bigcup\mathcal{R}(\mathsf{H})\in\mathcal{R}(\mathsf{H})$,
for any $\mathsf{H}$. For, all $\mathsf{J}\in\mathcal{R}(\mathsf{H})$
have the same range of subjunctive robustness, $\mathsf{RSR}(\mathsf{J})=\mathsf{RSR}(\mathsf{H})$;
and this means that the union of these sets also has this range of
subjunctive robustness: $\mathsf{RSR}(\mathsf{J})=\mathsf{RSR}(\bigcup\mathcal{R}(\mathsf{H}))$.
We can thus treat the union of a role's elements as the canonical
representative of the role. This gives us a representative member
of a role whenever needed, even when we are not explicitly given any
representative of this equivalence class.

Below I will interpret sentences by assigning pairs of implicational
roles to them: one role as a premise (premisory role), and one role
as a conclusion (conclusory role). I call such pairs of roles conceptual
contents. Being maximally liberal regarding what conceptual contents
are, I let any pair of implicational roles count as a conceptual content.

\begin{defn}[Conceptual Content]
 If there are two implicational roles $\mathcal{\mathtt{a}}^{+}=\mathcal{R}(\mathsf{H})$
and $\mathcal{\mathtt{a}}^{-}=\mathcal{R}(\mathsf{G})$ (for some
sets of candidate implications $\mathsf{H}$ and $\mathsf{G}$), then
the pair of them is a conceptual content, $\mathtt{a}$. That is,
$\mathtt{a}=\left\langle \mathcal{\mathtt{a}}^{+},\mathtt{a}^{-}\right\rangle $,
where $\mathcal{\mathtt{a}}^{+}$ is the premisory role and $\mathcal{\mathtt{a}}^{-}$
is the conclusory role of the content. The collection of all contents
is called $\mathbb{C}$.
\end{defn}

I will later assign conceptual contents to complex sentences in terms
of the contents of their constituents. To this end, I now define some
operations on roles, namely adjunction, symjunction, and power-symjunction.

\begin{defn}[Adjunction, $\sqcup$]
 Let $\mathsf{H},\mathsf{J}\subseteq\mathbb{S}$, then: $\mathcal{R}(\mathsf{H})\sqcup\mathcal{R}(\mathsf{J})=$
$\mathcal{R}(\{\left\langle \mathsf{A}\cup\mathsf{C},\mathsf{B}\cup\mathsf{D}\right\rangle |\left\langle \mathsf{A},\mathsf{B}\right\rangle \in\mathsf{H},\left\langle \mathsf{C},\mathsf{D}\right\rangle \in\mathsf{J}\})$.
If $\mathtt{R}$ is a set of roles, then we write $\bigsqcup\mathtt{R}$
for the adjunction of the roles in $\mathtt{R}$.
\end{defn}

\begin{defn}[Symjunction, $\sqcap$]
 Let $\mathsf{H},\mathsf{J}\subseteq\mathbb{S}$, then: $\mathcal{R}(\mathsf{H})\sqcap\mathcal{R}(\mathsf{J})$
$=\mathcal{R}(\mathsf{H}\cup\mathsf{J})$. If $\mathtt{R}$ is a set
of roles, then we write $\bigsqcap\mathtt{R}$ for the adjunction
of the roles in $\mathtt{R}$.
\end{defn}

\begin{defn}[Power-Symjunction, $\APLdown$]
 If $\mathtt{R}$ is a set of roles, then the power-symjunction of
the roles in $\mathtt{R}$, written $\APLdown\mathtt{R}$, is the
symjunction of the adjunction of every nonempty subset of $\mathtt{R}$,
that is: $\APLdown\mathtt{R}\,=$ $\bigsqcap\{\bigsqcup x\mid x\subseteq\mathtt{R},x\neq\emptyset\}$.
\end{defn}

Below I will define models in terms of implication frames. Since I
will not make use of the idea of truth-in-a-model in implication-space
semantics, however, I cannot say that an implication-space model is
a counterexample to an inference iff, in the model, all the premises
are true and all the conclusions aren't true. Instead, I now define
a notion of entailment among contents, and I will later say that a
model is a counterexample to an inference iff the set of contents
that the model assigns to the premises does not entail the set of
contents that the model assigns to the conclusions.

The idea behind the following definition of implication is that a
set of contents, $\mathtt{G}$, implies a set of contents, $\mathtt{D}$,
just in case the adjunction of all the premisory roles of $\mathtt{G}$
and all the conclusory roles of $\mathtt{D}$ is a role that contains
only good implications. That is, anything that plays the role of combining
all of the first contents as premises and all of the second contents
as conclusions always yields a good implication.

\begin{defn}[Content entailment in an implication frame, $\dttstile{}{}$]
 Given an implication frame $\left\langle \mathbb{B},\mathbb{I}\right\rangle $,
let $\mathtt{G},\mathtt{D}\subseteq\mathbb{C}$ and $\mathtt{G}=\{\mathtt{g}_{0},...,\mathtt{g}_{n}\}$
and $\mathtt{D}=\{\mathtt{d}_{0},...,\mathtt{d}_{m}\}$, then $\mathtt{G}\dttstile{}{}\mathtt{D}$
holds in the implication frame if and only if $\bigcup(\stackrel[i=0]{n}{\sqcup}\mathtt{g}_{i}^{+}\thinspace\sqcup\stackrel[j=0]{m}{\sqcup}\mathtt{d}_{j}^{-})\subseteq\mathbb{I}$.
\end{defn}

Notice that this entailment relation holds neither among sentences
nor among bearers of implicational roles but among contents. Unsurprisingly,
however, an entailment holds among the contents of two sets of bearers,
$\mathtt{G}\dttstile{}{}\mathtt{D}$, just in case the pair of sets
of bearers is a good implication in the frame, $\left\langle \mathsf{G},\mathsf{D}\right\rangle \in\mathbb{I}$.
In this sense, every implication frame interprets itself.

\begin{prop}
Let $\mathsf{G}\cup\mathsf{D}\subseteq\mathbb{B}$ and $\mathtt{G}=\{\mathcal{R}(\mathsf{g})\mid\mathsf{g}\in\mathsf{G}\}$$=\{\mathtt{g}_{0},...,\mathtt{g}_{n}\}$
and $\mathtt{D}=\{\mathcal{R}(\mathsf{d})\mid\mathsf{d}\in\mathsf{D}\}$$=\{\mathtt{d}_{0},...,\mathtt{d}_{n}\}$,
then: $\left\langle \mathsf{G},\mathsf{D}\right\rangle \in\mathbb{I}$
iff $\mathtt{G}\dttstile{}{}\mathtt{D}$.\label{prop:Frames-interpret-themselves}
\end{prop}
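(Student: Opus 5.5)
The plan is to reduce both sides of the biconditional to one fact about ranges of subjunctive robustness. For any $\mathsf{K}\subseteq\mathbb{S}$ we have $\mathsf{K}\subseteq\mathbb{I}$ iff $\left\langle \emptyset,\emptyset\right\rangle \in\mathsf{RSR}(\mathsf{K})$. This is immediate from the definition of $\mathsf{RSR}$, since $\left\langle \mathsf{G}\cup\emptyset,\mathsf{D}\cup\emptyset\right\rangle =\left\langle \mathsf{G},\mathsf{D}\right\rangle $. Consequently, whether a set lies inside $\mathbb{I}$ depends only on its $\mathsf{RSR}$, and hence only on its role: if $\mathsf{K}\in\mathcal{R}(\mathsf{H})$, then $\mathsf{K}\subseteq\mathbb{I}$ iff $\mathsf{H}\subseteq\mathbb{I}$.

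\textbf{Step 1: adjunction is well defined.} I would first check that the adjunction of roles does not depend on the representatives chosen. Write $\mathsf{H}\otimes\mathsf{J}$ for the set of pairwise unions. Then
\[
\mathsf{RSR}(\mathsf{H}\otimes\mathsf{J})=\{x\mid\forall h\in\mathsf{H}\,(h\cup x\in\mathsf{RSR}(\mathsf{J}))\},
\]
where $\cup$ on pairs is componentwise. The right-hand side depends on $\mathsf{J}$ only through $\mathsf{RSR}(\mathsf{J})$, and by the symmetric argument it depends on $\mathsf{H}$ only through $\mathsf{RSR}(\mathsf{H})$.

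\textbf{Step 2: the adjunction of singleton roles is the role of $\left\langle \mathsf{G},\mathsf{D}\right\rangle $.} Take $\{\left\langle \{\mathsf{g}\},\emptyset\right\rangle \}$ as representative of $\mathtt{g}^{+}$ and $\{\left\langle \emptyset,\{\mathsf{d}\}\right\rangle \}$ as representative of $\mathtt{d}^{-}$. The pairwise union of these singleton sets is the singleton $\{\left\langle \mathsf{G},\mathsf{D}\right\rangle \}$. By Step 1, the adjunction $\stackrel[i=0]{n}{\sqcup}\mathtt{g}_{i}^{+}\sqcup\stackrel[j=0]{m}{\sqcup}\mathtt{d}_{j}^{-}$ is therefore $\mathcal{R}(\{\left\langle \mathsf{G},\mathsf{D}\right\rangle \})$.

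\textbf{Main obstacle: collapsing of indices.} This step is where I expect trouble. The sets $\mathtt{G}$ and $\mathtt{D}$ are sets of \emph{contents}, so two distinct bearers with the same role contribute only one index. Adjunction is not obviously idempotent: $\mathtt{r}\sqcup\mathtt{r}$ need not equal $\mathtt{r}$. I would therefore read the indexing in the statement as ranging over the bearers in $\mathsf{G}$ and $\mathsf{D}$, with one entry per bearer, as the statement's use of $\mathtt{g}_{0},\dots,\mathtt{g}_{n}$ suggests. I would flag this reading explicitly. Under it, Step 2 goes through verbatim.

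\textbf{Step 3: the biconditional.} Let $\mathsf{U}=\bigcup\mathcal{R}(\{\left\langle \mathsf{G},\mathsf{D}\right\rangle \})$, the canonical representative of the role.

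For the right-to-left direction: $\{\left\langle \mathsf{G},\mathsf{D}\right\rangle \}$ is a member of its own role, so $\left\langle \mathsf{G},\mathsf{D}\right\rangle \in\mathsf{U}$. Hence $\mathsf{U}\subseteq\mathbb{I}$ gives $\left\langle \mathsf{G},\mathsf{D}\right\rangle \in\mathbb{I}$.

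For the left-to-right direction: every element of $\mathsf{U}$ lies in some $\mathsf{K}$ with $\mathsf{RSR}(\mathsf{K})=\mathsf{RSR}(\{\left\langle \mathsf{G},\mathsf{D}\right\rangle \})$. Since $\left\langle \mathsf{G},\mathsf{D}\right\rangle \in\mathbb{I}$, the opening observation gives $\left\langle \emptyset,\emptyset\right\rangle \in\mathsf{RSR}(\{\left\langle \mathsf{G},\mathsf{D}\right\rangle \})=\mathsf{RSR}(\mathsf{K})$. Therefore $\mathsf{K}\subseteq\mathbb{I}$, and so $\mathsf{U}\subseteq\mathbb{I}$, which is exactly $\mathtt{G}\dttstile{}{}\mathtt{D}$.
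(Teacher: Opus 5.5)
Your argument is essentially the paper's: both rest on the fact that $\langle\emptyset,\emptyset\rangle\in\mathsf{RSR}(\mathsf{K})$ iff $\mathsf{K}\subseteq\mathbb{I}$, applied to every member of $\mathcal{R}(\{\langle\mathsf{G},\mathsf{D}\rangle\})$, and your Steps~1--2 justify what the paper only asserts, namely that this role \emph{is} the adjunction of the $\mathtt{g}_i^{+}$ and $\mathtt{d}_j^{-}$. Your index-collapsing worry does not require reinterpreting the statement: each bearer's role has a singleton representative, so $\mathtt{g}^{+}\sqcup\mathtt{g}^{+}=\mathtt{g}^{+}$; by your own Step~1 you may compute this adjunction with the representatives of two distinct bearers $\mathsf{g}_1,\mathsf{g}_2$ sharing that content, which gives $\mathcal{R}(\{\langle\{\mathsf{g}_1\},\emptyset\rangle\})=\mathcal{R}(\{\langle\{\mathsf{g}_1,\mathsf{g}_2\},\emptyset\rangle\})$, and hence the statement holds exactly as written, with $\mathtt{G},\mathtt{D}$ as sets of contents.
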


\begin{proof}
$\left\langle \mathsf{G},\mathsf{D}\right\rangle \in\mathbb{I}$ iff,
for every $x\in\mathcal{R}(\left\langle \mathsf{G},\mathsf{D}\right\rangle )$,
$\left\langle \emptyset,\emptyset\right\rangle \in\mathsf{RSR}(x)$.
That holds iff, for all candidate implications $y\in x$, $y\in\mathbb{I}$.
That means that $x\subseteq\mathbb{I}$, and hence that the union
of the elements of the role is also a subset of $\mathbb{I}$. So,
$\left\langle \mathsf{G},\mathsf{D}\right\rangle \in\mathbb{I}$ iff
$\bigcup\mathcal{R}(\left\langle \mathsf{G},\mathsf{D}\right\rangle )\subseteq\mathbb{I}$.
But $\mathcal{R}(\left\langle \mathsf{G},\mathsf{D}\right\rangle )$
just is $\stackrel[i=0]{n}{\sqcup}\mathtt{g}_{i}^{+}\thinspace\sqcup\stackrel[j=0]{m}{\sqcup}\mathtt{d}_{j}^{-}$.
Therefore, $\left\langle \mathsf{G},\mathsf{D}\right\rangle \in\mathbb{I}$
iff $\bigcup(\stackrel[i=0]{n}{\sqcup}\mathtt{g}_{i}^{+}\thinspace\sqcup\stackrel[j=0]{m}{\sqcup}\mathtt{d}_{j}^{-})\subseteq\mathbb{I}$,
which means, by definition, that $\mathtt{G}\dttstile{}{}\mathtt{D}$.
\end{proof}

Except for starting with bearers that are composites of properties
and objects (and the definition of power-symjunction), I have so far
merely copied everything from propositional implication-space semantics.
It will be useful below, however, to define a notion of substitution
of objects in bearers of implicational roles, as this will be helpful
in exploiting the internal structure of bearers to state the semantic
clause for the universal quantifier.

\begin{defn}[{Substitution of objects, $[\cdotp/\cdotp]\cdotp$}]
 For any bearer $\mathsf{P}\overrightarrow{\mathsf{o}}$ and objects
$\mathtt{o}_{j}$ and $\mathtt{o}_{k}$, $[\mathtt{o}_{j}/\mathtt{o}_{k}]\mathsf{P}\overrightarrow{\mathsf{o}}$
is the bearer like $\mathsf{P}\overrightarrow{\mathsf{o}}$ except
that $\mathtt{o}_{k}$ is replaced by $\mathtt{o}_{j}$ everywhere
in $\overrightarrow{\mathsf{o}}$. For a set of bearers $\mathsf{A}$,
$[\mathtt{o}_{j}/\mathtt{o}_{k}]\mathsf{A}$ is the set of $[\mathtt{o}_{j}/\mathtt{o}_{k}]\mathsf{P}\overrightarrow{\mathsf{o}}$
for all $\mathsf{P}\overrightarrow{\mathsf{o}}\in\mathsf{A}$. We
can write $[\mathtt{o}_{j}/\mathtt{o}_{k}]\mathcal{R}(\mathsf{A})$
for $\mathcal{R}([\mathtt{o}_{j}/\mathtt{o}_{k}]\mathsf{A})$.
\end{defn}

To illustrate, $[\mathtt{d}/\mathtt{c}]\mathsf{Pabc}=\mathsf{Pabd}$.
Notice that it can happen that $\mathcal{R}(\mathsf{A})=\mathcal{R}(\mathsf{B})$
but $\mathcal{R}([\mathtt{o}_{j}/\mathtt{o}_{k}]\mathsf{A})\not=\mathcal{R}([\mathtt{o}_{j}/\mathtt{o}_{k}]\mathsf{B})$.
This can happen, \emph{inter alia}, if the bearers in $\mathsf{A}$
contain object $\mathtt{o}_{k}$ but the bearers in $\mathsf{B}$
don't. So, when we write $[\mathtt{o}_{j}/\mathtt{o}_{k}]\mathcal{R}(\mathsf{A})$,
we cannot think of $\mathcal{R}(\mathsf{A})$ as a role on which $[\mathtt{o}_{j}/\mathtt{o}_{k}]$
acts. The substitution of objects is defined for bearers and not for
roles. It is nevertheless often convenient to use this notation, for
instance, to write $\APLdown\{\mathcal{R}([\mathtt{o}_{j}/\mathtt{o}_{k}]\mathsf{A}),\mathcal{R}([\mathtt{o}_{j}/\mathtt{o}_{k}]\mathsf{B})\}$
as $[\mathtt{o}_{j}/\mathtt{o}_{k}]\APLdown\{\mathcal{R}(\mathsf{A}),\mathcal{R}(\mathsf{B})\}$
or to write $\bigsqcup\{\mathcal{R}([\mathtt{o}_{j}/\mathtt{o}_{k}]\mathsf{A}),\mathcal{R}([\mathtt{o}_{j}/\mathtt{o}_{k}]\mathsf{B})\}$
as $[\mathtt{o}_{j}/\mathtt{o}_{k}]\bigsqcup\{\mathcal{R}(\mathsf{A}),\mathcal{R}(\mathsf{B})\}$.

We have now seen what implication frames are, how we can abstract
roles from these frames, and how we can define contents as pairs of
such roles. Moreover, I have defined three operations on such roles.
We can use the thus defined contents to interpret a language by assigning
to each sentence a content that we abstract from a given implication
frame.

\subsection{Interpreting First-Order Languages}

In this subsection, I will use the implication frames and contents
defined in the previous subsection to interpret the expressions of
a standard first-order language. I will work with first-order languages
that contain neither identity, nor functors, nor primitive atomic
sentences. Let me say explicitly what the languages at issue look
like.
\begin{quote}
Language $\mathcal{L}$:
\begin{itemize}
\item $\mathsf{Pred}^{n}$: $F_{1},F_{2},...$ is a countable stock of predicates,
each with a given and fixed arity $n$.
\item Terms:
\begin{itemize}
\item $\mathsf{Name}$: $a_{1},a_{2},...$ is a countable stock of names.
\item $\mathsf{Var}$: $x_{1},x_{2},...$ is a countable stock of variables.
\end{itemize}
\item $\mathsf{At_{Sent}}$: any $n$-ary predicate followed by $n$ names
is an atomic sentence, and nothing else is an atomic sentence.
\item $\mathsf{At}$: any $n$-ary predicate followed by $n$ terms is an
atomic formula, and nothing else is.
\item $\mathsf{Form}$: An atomic formula is a formula. If $\phi$ and $\psi$
are formulae and $x_{i}$ is a variable, then $\neg\phi$, $\phi\wedge\psi$,
and $\forall x_{i}\phi$ are formulae. Disjunction, conditionals,
and existential generalizations are treated as defined: $\phi\lor\psi=_{df.}\neg(\neg\phi\wedge\neg\psi)$,
$\phi\rightarrow\psi=_{df.}\neg(\phi\wedge\neg\psi)$, and $\exists x_{i}\phi=_{df.}\neg\forall x_{i}\neg\phi$.
\item $\mathsf{Sent}$: A sentence is a formula in which no variable occurs
free.
\end{itemize}
\end{quote}

In order to account for quantification, we will have to make use of
the idea of substituting terms for other terms in formulae, in the
following familiar sense.

\begin{defn}[{Substitution of terms, $[\cdotp/\cdotp]\cdotp$}]
 Terms: $[t/a](a)=t$ and $[t/a](b)=b$ if $b\neq a$. Atomic sentences:$[t/a](Ft_{k}....t_{m})=F[t/a](t_{k})....[t/a](t_{m})$.
Complex sentences: If $\circ$ is some sentential connective, then
$[t/a](\phi\circ\psi)$ $=[t/a]\phi\circ[t/a]\psi$. Moreover, $[t/x](\forall x\phi)$
$=\forall x\phi$. And $[t/a](\forall x\phi)$ $=\forall x[t/a]\phi$,
if $a\neq x$.
\end{defn}

With our language thus in place, we can turn to models. An implication-space
model consists of the implication frame, from which we abstract contents
as above, and an interpretation function that maps sentences to such
contents.

\begin{defn}[Implication-space model]
 An implication-space model, $\mathcal{M}$, is a pair $\left\langle \left\langle \mathbb{B},\mathbb{I}\right\rangle ,\left\llbracket \cdotp\right\rrbracket ^{\mathcal{M}}\right\rangle $
consisting of an implication frame, $\left\langle \mathbb{B},\mathbb{I}\right\rangle $,
and an interpretation function $\left\llbracket \cdotp\right\rrbracket ^{\mathcal{M}}$
that maps all sentences of a given language, $\mathcal{L}$, to contents
in $\mathbb{C}$ defined by $\left\langle \mathbb{B},\mathbb{I}\right\rangle $.
\end{defn}

For our case of a first-order language, we must specify further what
the interpretation function of a model looks like. This is where we
must provide semantic clauses for our logical vocabulary. For negation
and conjunction, I will use the clauses from propositional implication-space
semantics \citep{Hlobil2024-HLORFL}. The novelty lies in the clause
for the universal quantifier. Names are interpreted by objects and
predicates by properties, and the content of atomic sentences is the
role of the corresponding bearer.

\begin{defn}[Interpretation function, $\left\llbracket \cdotp\right\rrbracket $]
\label{def:NMMS-interpretations} An interpretation function $\left\llbracket \cdotp\right\rrbracket $
maps expressions of a language $\mathcal{L}$ to conceptual contents
$\mathbb{C}$, objects $\mathbb{O}$, and properties $\mathbb{P}$
of an implication frame as follows: $\forall a\in\mathsf{Name}(\left\llbracket a\right\rrbracket \in\mathbb{O})$
and $\forall F\in\mathsf{Perd}^{n}(\left\llbracket F\right\rrbracket \in\mathbb{P}^{n})$.
If $Fa_{1},...,a_{i}$ is an atomic sentence such that $\mathsf{P}=\left\llbracket F\right\rrbracket $
and $\overrightarrow{\mathsf{o}}=\left\langle \left\llbracket a_{1}\right\rrbracket ,...,\left\llbracket a_{i}\right\rrbracket \right\rangle $,
then $\left\llbracket Fa_{1},...,a_{i}\right\rrbracket =_{df.}\mathcal{R}(\mathsf{P}\overrightarrow{\mathsf{o}})$.
The interpretations of complex sentences are:
\begin{itemize}
\item $\left\llbracket \neg\phi\right\rrbracket =_{df.}\left\langle \left\llbracket \phi\right\rrbracket ^{-},\left\llbracket \phi\right\rrbracket ^{+}\right\rangle $,
\item $\left\llbracket \phi\wedge\psi\right\rrbracket =_{df.}\left\langle \bigsqcup\{\left\llbracket \phi\right\rrbracket ^{+},\left\llbracket \psi\right\rrbracket ^{+}\},\APLdown\{\left\llbracket \phi\right\rrbracket ^{-},\left\llbracket \psi\right\rrbracket ^{-}\}\right\rangle $
\item $\left\llbracket \forall x[x/a]\phi\right\rrbracket =_{df.}\left\langle \bigsqcup\{[\mathtt{o}/\left\llbracket a\right\rrbracket ]\left\llbracket \phi\right\rrbracket ^{+}\mid\mathsf{o}\in\mathbb{O}\},\APLdown\{[\mathtt{o}/\left\llbracket a\right\rrbracket ]\left\llbracket \phi\right\rrbracket ^{-}\mid\mathsf{o}\in\mathbb{O}\}\right\rangle $
\end{itemize}
Interpretations of sets of sentences are the set of the interpretants
of the sentences, that is, $\left\llbracket \Gamma\right\rrbracket =\{\left\llbracket \gamma\right\rrbracket \mid\gamma\in\Gamma\}$.
\end{defn}

As is obvious from inspecting the clauses of conjunction and universal
quantification, the idea is to treat the universal quantifier as,
roughly, a ``generalized conjunction.'' That is, if we had names
for all objects, the premisory role of a universal generalization
would be the premisory role of the conjunction of all its instances
and the conclusory role would be the conclusory role of the conjunction
of all of its instances.

If we apply the definitions of the conditional, disjunction, and existential
generalization and work through the semantic clauses above, we arrive
at the following clauses:
\begin{itemize}
\item $\left\llbracket \phi\rightarrow\psi\right\rrbracket =\left\langle \APLdown\{\left\llbracket \phi\right\rrbracket ^{-},\left\llbracket \psi\right\rrbracket ^{+}\},\bigsqcup\{\left\llbracket \phi\right\rrbracket ^{+},\left\llbracket \psi\right\rrbracket ^{-}\}\right\rangle $
\item $\left\llbracket \phi\lor\psi\right\rrbracket =_{}\left\langle \APLdown\{\left\llbracket \phi\right\rrbracket ^{+},\left\llbracket \psi\right\rrbracket ^{+}\},\bigsqcup\{\left\llbracket \phi\right\rrbracket ^{-},\left\llbracket \psi\right\rrbracket ^{-}\}\right\rangle $
\item $\left\llbracket \exists x[x/a]\phi\right\rrbracket =\left\langle \APLdown\{[\mathtt{o}/\left\llbracket a\right\rrbracket ]\left\llbracket \phi\right\rrbracket ^{+}\mid\mathsf{o}\in\mathbb{O}\},\bigsqcup\{[\mathtt{o}/\left\llbracket a\right\rrbracket ]\left\llbracket \phi\right\rrbracket ^{-}\mid\mathsf{o}\in\mathbb{O}\}\right\rangle $
\end{itemize}
Unsurprisingly, the existential quantifier is, on this account, a
``generalized disjunction'' in the same sense in which the universal
quantifier is a ``generalized conjunction.''

Since I don't use the notion of truth-in-a-model, I cannot say that
an inference is valid, relative to a set of implication-space models
iff at least one conclusion is true in every model in which all the
premises are true. Hence, we need another way to define consequence
or implication, as I will say, relative to a set of models.

\begin{defn}[Implication in $m$-models, $\sttstile{}{m}$]
 Given a subset, $m$, of all implication-space models, for a given
language, we say that the set of sentences $\Gamma$ implies the set
of sentences $\Delta$ in $m$, written $\Gamma\sttstile{}{m}\Delta$
iff, for all $\mathcal{M}\in m$, $\left\llbracket \Gamma\right\rrbracket ^{\mathcal{M}}\dttstile{}{}\left\llbracket \Delta\right\rrbracket ^{\mathcal{M}}$
in the implication frame of $\mathcal{M}$.
\end{defn}

We can now ask: Is there a class of implication-space models $m$
such that $\Gamma\sttstile{}{m}\Delta$ iff $\Gamma\models_{FOL}\Delta$,
that is, iff $\Gamma$ implies $\Delta$ in classical first-order
logic? And, if so, can this class of models be characterized in an
easy and illuminating way? The answer to both of these questions is
``yes.'' Showing this, however, will take a bit of work; and the
job of the next section is to do that work. Before we get to that
we should consider whether the just presented version of implication-space
semantics meets the other desiderata from the Introduction.

\subsection{Are the Desiderata Met?\label{subsec:Are-the-Desiderata-met}}

Recall that my goal is to formulate a version of implication-space
semantics that obeys the constraints MOF, SCL, DDT, DS, LC and doesn't
underwrite the rules $\forall R$ and $\forall L$. The reasons why
MOF, DDT, DS, and LC hold are the same as in propositional implication-space
semantics.
\begin{prop}
It can happen that $\Gamma\sttstile{}{m}\Delta$ but not $\Gamma,\phi\sttstile{}{m}\Delta$.\label{prop:MO-fails}
\end{prop}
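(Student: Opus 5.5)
The plan is to construct a single implication-space model and let $m$ be the singleton containing it, so that $\sttstile{}{m}$ reduces to content entailment in that one model. By Proposition~\ref{prop:Frames-interpret-themselves}, for atomic sentences content entailment coincides with membership of the corresponding pair of sets of bearers in $\mathbb{I}$. So it suffices to pick atomic sentences and an implication relation $\mathbb{I}$ that is itself nonmonotonic.

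Concretely, I take a language with a unary predicate $F$ and names $a,b,c$. The frame has one unary property $\mathsf{P}$ and three objects $\mathsf{o}_{1},\mathsf{o}_{2},\mathsf{o}_{3}$, so $\mathbb{B}=\{\mathsf{Po}_{1},\mathsf{Po}_{2},\mathsf{Po}_{3}\}$. The interpretation sets $\left\llbracket F\right\rrbracket =\mathsf{P}$, $\left\llbracket a\right\rrbracket =\mathsf{o}_{1}$, $\left\llbracket b\right\rrbracket =\mathsf{o}_{2}$ and $\left\llbracket c\right\rrbracket =\mathsf{o}_{3}$. Complex sentences are interpreted by the clauses of Definition~\ref{def:NMMS-interpretations}, which determine them uniquely once the atoms are fixed, so this is a legitimate model $\mathcal{M}$. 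I let $\mathbb{I}=\{\left\langle \{\mathsf{Po}_{1}\},\{\mathsf{Po}_{2}\}\right\rangle \}$, or more generously all pairs $\left\langle \mathsf{G},\mathsf{D}\right\rangle$ with $\mathsf{Po}_{1}\in\mathsf{G}$, $\mathsf{Po}_{2}\in\mathsf{D}$ and $\mathsf{Po}_{3}\notin\mathsf{G}$. The second version mirrors the Tweety example: $\mathsf{Po}_{3}$ is a defeater.

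With $\Gamma=\{Fa\}$, $\Delta=\{Fb\}$, $\phi=Fc$ and $m=\{\mathcal{M}\}$, Proposition~\ref{prop:Frames-interpret-themselves} gives $\left\llbracket Fa\right\rrbracket \dttstile{}{}\left\llbracket Fb\right\rrbracket$, because $\left\langle \{\mathsf{Po}_{1}\},\{\mathsf{Po}_{2}\}\right\rangle \in\mathbb{I}$. Hence $Fa\sttstile{}{m}Fb$. The same proposition gives that $\left\llbracket Fa\right\rrbracket ,\left\llbracket Fc\right\rrbracket$ does not entail $\left\llbracket Fb\right\rrbracket$, because $\left\langle \{\mathsf{Po}_{1},\mathsf{Po}_{3}\},\{\mathsf{Po}_{2}\}\right\rangle \notin\mathbb{I}$. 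Hence $Fa,Fc\not\sttstile{}{m}Fb$.

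The only step needing care is the application of Proposition~\ref{prop:Frames-interpret-themselves}. The sets of contents there are obtained from sets of bearers, and distinct bearers might collapse to the same role, which would change the sets of contents. I would check that $\mathsf{Po}_{1}$, $\mathsf{Po}_{2}$ and $\mathsf{Po}_{3}$ have pairwise distinct premisory and conclusory roles in this frame; this can be read off their ranges of subjunctive robustness. For example, $\left\langle \emptyset,\{\mathsf{Po}_{2}\}\right\rangle \in\mathsf{RSR}(\left\langle \{\mathsf{Po}_{1}\},\emptyset\right\rangle )$ but $\left\langle \emptyset,\{\mathsf{Po}_{2}\}\right\rangle \notin\mathsf{RSR}(\left\langle \{\mathsf{Po}_{3}\},\emptyset\right\rangle )$. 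Even without that check, the proposition's proof only uses the adjunction of the roles of the listed bearers, so the argument goes through directly.
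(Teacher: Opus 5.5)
Your proposal is correct and follows essentially the same route as the paper: a singleton $m$ whose frame has a nonmonotonic $\mathbb{I}$ (a good $\langle\{\mathsf{a}\},\{\mathsf{b}\}\rangle$ whose enlargement $\langle\{\mathsf{a},\mathsf{d}\},\{\mathsf{b}\}\rangle$ is bad), three sentences interpreted by the corresponding bearer roles, and a transfer to $\sttstile{}{m}$ via Proposition~\ref{prop:Frames-interpret-themselves}. Your version is somewhat more careful than the paper's, because you use genuinely atomic sentences $Fa,Fb,Fc$ and you verify that the premisory roles of $\mathsf{Po}_{1}$ and $\mathsf{Po}_{3}$ differ, so the set of premise contents does not collapse; the paper leaves this implicit.
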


\begin{proof}
There is an implication frame with that $\left\langle \{\mathsf{a}\},\{\mathsf{b}\}\right\rangle \in\mathbb{I}$
but not $\left\langle \{\mathsf{a},\mathsf{d}\},\{\mathsf{b}\}\right\rangle \in\mathbb{I}$.
By Proposition~\ref{prop:Frames-interpret-themselves}, it follows
that $\{\mathsf{a}\}\dttstile{}{}\{\mathsf{b}\}$ but not $\{\mathsf{a},\mathsf{d}\}\dttstile{}{}\{\mathsf{b}\}$
in that implication frame. And if we let $\left\llbracket \phi\right\rrbracket =\mathtt{a}$,
$\left\llbracket \psi\right\rrbracket =\mathtt{b}$, $\left\llbracket \chi\right\rrbracket =\mathtt{d}$
and $m$ is the singelton of such a model, then $\phi\sttstile{}{m}\psi$
but not $\phi,\chi\sttstile{}{m}\psi$.
\end{proof}

\begin{prop}
DDT, DS, and LC hold in all sets, $m$, of implication-space models:
(DDT) $\Gamma,\phi\sttstile{}{m}\psi,\Delta$ iff $\Gamma\sttstile{}{m}\phi\rightarrow\psi,\Delta$;
(DS) if $\Gamma,\phi\lor\psi\sttstile{}{m}\Delta$, then $\Gamma,\phi\sttstile{}{m}\Delta$
and $\Gamma,\psi\sttstile{}{m}\Delta$; (LC) $\Gamma,\phi\wedge\psi\sttstile{}{m}\Delta$
iff $\Gamma,\phi,\psi\sttstile{}{m}\Delta$.\label{prop:DDT-DS-LC-hold}
\end{prop}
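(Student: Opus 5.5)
Since $\Gamma\sttstile{}{m}\Delta$ is defined by quantifying over all $\mathcal{M}\in m$, it suffices to prove each clause for a single arbitrary model $\mathcal{M}$, working directly with the content entailment $\dttstile{}{}$ of its implication frame. Throughout I will use two structural facts about adjunction, taken over from propositional implication-space semantics \citep{Hlobil2024-HLORFL}. First, $\sqcup$ is well defined on roles, i.e.\ independent of the chosen representatives. Second, $\sqcup$ is commutative and associative, because pairwise union of candidate implications is. I would verify the first briefly via ranges of subjunctive robustness: $\mathsf{RSR}$ of a pairwise-union set is determined by the $\mathsf{RSR}$s of its factors. Given these, the entailment $\mathtt{G}\dttstile{}{}\mathtt{D}$ depends only on the adjunction of the premisory roles of $\mathtt{G}$ and the conclusory roles of $\mathtt{D}$, in any order and grouping.

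LC and DDT then fall out of the clauses almost immediately. For LC, the premisory role of $\phi\wedge\psi$ is $\bigsqcup\{\left\llbracket \phi\right\rrbracket ^{+},\left\llbracket \psi\right\rrbracket ^{+}\}$. Adjoining it with the premisory roles of $\left\llbracket \Gamma\right\rrbracket $ and the conclusory roles of $\left\llbracket \Delta\right\rrbracket $ therefore yields, by associativity, exactly the role that figures in $\left\llbracket \Gamma\right\rrbracket ,\left\llbracket \phi\right\rrbracket ,\left\llbracket \psi\right\rrbracket \dttstile{}{}\left\llbracket \Delta\right\rrbracket $. Both entailments thus require the union of the same role to be a subset of $\mathbb{I}$. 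DDT is the same computation on the conclusion side, since the derived clause gives $\left\llbracket \phi\rightarrow\psi\right\rrbracket ^{-}=\bigsqcup\{\left\llbracket \phi\right\rrbracket ^{+},\left\llbracket \psi\right\rrbracket ^{-}\}$.

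One bookkeeping point must be handled here: interpretations of sets are \emph{sets} of contents, so coinciding contents collapse. For instance, if $\left\llbracket \phi\right\rrbracket =\left\llbracket \psi\right\rrbracket $, the set $\{\left\llbracket \phi\right\rrbracket ^{+},\left\llbracket \psi\right\rrbracket ^{+}\}$ is a singleton. I would check that this collapse happens identically on both sides of each biconditional, because the clause for $\wedge$ is itself stated with a set of roles. Where a content of $\Gamma$ coincides with that of $\phi$ or $\psi$, I expect to need the same observation, or else a remark that the definition of $\dttstile{}{}$ is read with the convention inherited from the propositional case.

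DS is the step requiring a real lemma, and I expect it to be the main obstacle. By the derived clause, $\left\llbracket \phi\lor\psi\right\rrbracket ^{+}=\APLdown\{\left\llbracket \phi\right\rrbracket ^{+},\left\llbracket \psi\right\rrbracket ^{+}\}$, which is the symjunction $\mathcal{R}(\mathsf{H}\cup\mathsf{J}\cup\mathsf{K})$, where $\mathsf{H},\mathsf{J},\mathsf{K}$ represent $\left\llbracket \phi\right\rrbracket ^{+}$, $\left\llbracket \psi\right\rrbracket ^{+}$ and their adjunction. I would prove a monotonicity lemma: if $\mathsf{H}\subseteq\mathsf{H}'$ and $\bigcup(\mathcal{R}(\mathsf{H}')\sqcup\mathtt{X})\subseteq\mathbb{I}$, then $\bigcup(\mathcal{R}(\mathsf{H})\sqcup\mathtt{X})\subseteq\mathbb{I}$.

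The plan for the lemma is to mimic the proof of Proposition~\ref{prop:Frames-interpret-themselves}. First I reformulate $\bigcup(\mathcal{R}(\mathsf{H})\sqcup\mathcal{R}(\mathsf{X}))\subseteq\mathbb{I}$ as the condition that every candidate implication in the canonical representative $\bigcup\mathcal{R}(\mathsf{X})$ lies in $\mathsf{RSR}(\mathsf{H})$. Then I use the evident antitonicity $\mathsf{RSR}(\mathsf{H}')=\bigcap_{h\in\mathsf{H}'}\mathsf{RSR}(\{h\})\subseteq\mathsf{RSR}(\mathsf{H})$.

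The delicate part is the reformulation itself. Canonical representatives of an adjunction may contain candidate implications not literally arising as pairwise unions of the chosen representatives. I would show that, because every member of a role has the same $\mathsf{RSR}$, membership of all of $\bigcup$ of the adjunction in $\mathbb{I}$ is equivalent to $\langle\emptyset,\emptyset\rangle$ lying in the common $\mathsf{RSR}$, and hence to the condition on representatives. Applying the lemma with $\mathtt{X}$ the adjunction of the premisory roles of $\left\llbracket \Gamma\right\rrbracket $ and the conclusory roles of $\left\llbracket \Delta\right\rrbracket $, once with $\mathsf{H}$ and once with $\mathsf{J}$, gives both halves of DS.
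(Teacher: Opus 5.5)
Your main line is the paper's. LC and DDT go by substituting $\llbracket\phi\rrbracket^{+}\sqcup\llbracket\psi\rrbracket^{+}$, resp.\ $\llbracket\phi\rrbracket^{+}\sqcup\llbracket\psi\rrbracket^{-}$, inside the adjunction. DS goes via $\mathsf{H}\subseteq\mathsf{H}\cup\mathsf{J}\cup\mathsf{K}$ and antitonicity of $\mathsf{RSR}$. Your well-definedness remark and your reformulation are correct, and they supply what the paper only asserts. The reformulation says the entailment holds iff every member of $\bigcup\mathcal{R}(\mathsf{X})$ lies in $\mathsf{RSR}(\mathsf{H})$.

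The step that fails as you describe it is the bookkeeping about coinciding contents. The collapse does \emph{not} happen identically on both sides.
\begin{itemize}
\item In DDT, suppose $\llbracket\phi\rrbracket^{+}=\llbracket\psi\rrbracket^{-}$. The left side contributes this role twice, once from the premise set and once from the conclusion set. But $\llbracket\phi\rightarrow\psi\rrbracket^{-}=\bigsqcup\{\llbracket\phi\rrbracket^{+},\llbracket\psi\rrbracket^{-}\}$ contributes it once.
\item In LC the same thing happens when $\llbracket\phi\rrbracket^{+}=\llbracket\psi\rrbracket^{+}$ but $\llbracket\phi\rrbracket\neq\llbracket\psi\rrbracket$.
\item Whenever $\llbracket\phi\rrbracket\in\llbracket\Gamma\rrbracket$, LC compares two occurrences of $\llbracket\phi\rrbracket^{+}$ with one. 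In DS, your lemma then yields $\mathtt{X}\sqcup\llbracket\phi\rrbracket^{+}$, although the target is $\mathtt{X}$ itself, which already contains $\llbracket\phi\rrbracket^{+}$.
\end{itemize}
Your observation about the $\wedge$-clause being stated with a set of roles only covers the case $\llbracket\phi\rrbracket=\llbracket\psi\rrbracket$.

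These discrepancies are real because $\sqcup$ is not idempotent on arbitrary roles. For example, $\mathcal{R}(\{\langle\{\mathsf{a}\},\emptyset\rangle,\langle\{\mathsf{b}\},\emptyset\rangle\})$ adjoined with itself acquires $\langle\{\mathsf{a},\mathsf{b}\},\emptyset\rangle$. That yields a different role in any frame where $\langle\{\mathsf{a}\},\emptyset\rangle,\langle\{\mathsf{b}\},\emptyset\rangle\in\mathbb{I}$ but $\langle\{\mathsf{a},\mathsf{b}\},\emptyset\rangle\notin\mathbb{I}$.

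Under the paper's set-based definitions, the missing lemma is idempotence for interpreted sentences: $\llbracket\chi\rrbracket^{\pm}\sqcup\llbracket\chi\rrbracket^{\pm}=\llbracket\chi\rrbracket^{\pm}$. Write $h\cup j$ for the pairwise union of candidate implications. The lemma holds for these reasons:
\begin{itemize}
\item A role with a representative $\mathsf{H}$ closed under pairwise union is idempotent, since $h\cup h=h$ gives $\{h\cup h'\mid h,h'\in\mathsf{H}\}=\mathsf{H}$.
\item Atomic roles have singleton representatives, and negation merely swaps roles.
\item If $\mathsf{H}$ and $\mathsf{J}$ are closed, then so are $\{h\cup j\}$ and $\mathsf{H}\cup\mathsf{J}\cup\{h\cup j\}$. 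This is exactly what power-symjunction secures and plain symjunction $\mathsf{H}\cup\mathsf{J}$ would not.
\item The quantifier clauses go the same way, since substitution of objects commutes with unions of bearer sets.
\end{itemize}
With idempotence, commutativity and associativity, repeated roles are harmless, and every overlapping case reduces to your main computation. The paper's proof passes over this point silently, so you were right to raise it. The remedy, however, is idempotence, not matching collapses.
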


\begin{proof}
We begin with LC. By our semantic clause for conjunction, $\left\llbracket \phi\wedge\psi\right\rrbracket ^{+}$
$=\bigsqcup\{\left\llbracket \phi\right\rrbracket ^{+},\left\llbracket \psi\right\rrbracket ^{+}\}$,
which is $\left\llbracket \phi\right\rrbracket ^{+}\sqcup\left\llbracket \psi\right\rrbracket ^{+}$.
Hence, they can be substituted salva vertiate in the place of $X$
in the schema $\bigcup(\stackrel[i=0]{n}{\sqcup}\mathtt{g}_{i}^{+}\thinspace\sqcup X\thinspace\sqcup\stackrel[j=0]{m}{\sqcup}\mathtt{d}_{j}^{-})\subseteq\mathbb{I}$.
Therefore, $\mathtt{G}\cup\{\left\llbracket \phi\wedge\psi\right\rrbracket \}\dttstile{}{}\mathtt{D}$
iff $\mathtt{G}\cup\{\left\llbracket \phi\right\rrbracket ,\left\llbracket \psi\right\rrbracket \}\dttstile{}{}\mathtt{D}$,
for any sets of contents $\mathtt{G}$ and $\mathtt{D}$, in all implication
space models. So, $\Gamma,\phi,\psi\sttstile{}{m}\Delta$ iff $\Gamma,\phi\wedge\psi\sttstile{}{m}\Delta$
in all sets of implication-space models $m$.

For DDT and DS, I will use the derived semantic clauses above. For
DDT, note that $\left\llbracket \phi\rightarrow\psi\right\rrbracket ^{-}$
is $\bigsqcup\{\left\llbracket \phi\right\rrbracket ^{+},\left\llbracket \psi\right\rrbracket ^{-}\}$.
So they can be substituted for each other in the place of $X$ in
the schema above. Therefore, $\mathtt{G}\dttstile{}{}\{\left\llbracket \phi\rightarrow\psi\right\rrbracket \}\cup\mathtt{D}$
iff $\mathtt{G}\cup\{\left\llbracket \phi\right\rrbracket \}\dttstile{}{}\{\left\llbracket \psi\right\rrbracket \}\cup\mathtt{D}$.
So, $\Gamma\sttstile{}{m}\phi\rightarrow\psi,\Delta$ iff $\Gamma,\phi\sttstile{}{m}\psi,\Delta$
in all sets of implication-space models $m$.

Regarding DS, notice that $\left\llbracket \phi\lor\psi\right\rrbracket ^{+}$
is $\left\llbracket \phi\right\rrbracket ^{+}\sqcap\left\llbracket \psi\right\rrbracket ^{+}\sqcap(\left\llbracket \phi\right\rrbracket ^{+}\sqcup\left\llbracket \psi\right\rrbracket ^{+})$.
So, if we let $\left\llbracket \phi\right\rrbracket ^{+}=\mathcal{R}(\mathsf{H})$
and $\left\llbracket \psi\right\rrbracket ^{+}=\mathcal{R}(\mathsf{J)}$
and $\left\llbracket \phi\right\rrbracket ^{+}\sqcup\left\llbracket \psi\right\rrbracket ^{+}=\mathcal{R}(\mathsf{K)}$,
then $\left\llbracket \phi\lor\psi\right\rrbracket ^{+}=\mathcal{R}(\mathsf{H}\cup\mathsf{J}\cup\mathsf{K})$.
Now the range of subjunctive robustness that every element of $\mathcal{R}(\mathsf{H}\cup\mathsf{J}\cup\mathsf{K})$
shares is the intersection of the ranges of subjunctive robustness
of $\mathsf{H}$ and $\mathsf{J}$ and $\mathsf{K}$. So $\mathsf{RSR}(\bigcup\mathcal{R}(\mathsf{H}\cup\mathsf{J}\cup\mathsf{K}))\subseteq\mathsf{RSR}(\bigcup\mathcal{R}(\mathsf{H}))$
and $\mathsf{RSR}(\bigcup\mathcal{R}(\mathsf{H}\cup\mathsf{J}\cup\mathsf{K}))\subseteq\mathsf{RSR}(\bigcup\mathcal{R}(\mathsf{J}))$.
Now, for all $\mathsf{X}$ and $\mathsf{Y}$, if $\mathsf{RSR}(\bigcup\mathcal{R}(\mathsf{X}))\subseteq\mathsf{RSR}(\bigcup\mathcal{R}(\mathsf{Y}))$,
then $\mathtt{G}\cup\{\mathcal{R}(\mathsf{X})\}\dttstile{}{}\mathtt{D}$
implies $\mathtt{G}\cup\{\mathcal{R}(\mathsf{Y})\}\dttstile{}{}\mathtt{D}$,
for every $\mathtt{G}$ and $\mathtt{D}$. So, if $\mathtt{G}\cup\{\mathcal{R}(\mathsf{H}\cup\mathsf{J}\cup\mathsf{K})\}\dttstile{}{}\mathtt{D}$,
then $\mathtt{G}\cup\{\mathcal{R}(\mathsf{H})\}\dttstile{}{}\mathtt{D}$
and $\mathtt{G}\cup\{\mathcal{R}(\mathsf{J})\}\dttstile{}{}\mathtt{D}$.
Therefore, if $\Gamma,\phi\lor\psi\sttstile{}{m}\Delta$, then $\Gamma,\phi\sttstile{}{m}\Delta$,
in all sets of models $m$.
\end{proof}

\begin{prop}
There are implication-space models that are counterexamples to $\forall R$
and $\forall L$, that is: it can happen that $\Gamma\sttstile{}{m}Fa,\Delta$,
where $a$ does not occur in any sentence in $\Gamma$ or $\Delta$,
but not $\Gamma\sttstile{}{m}\forall xFx,\Delta$ and it can happen
$\Gamma,Fa\sttstile{}{m}\Delta$ but not $\Gamma,\forall xFx\sttstile{}{m}\Delta$.\label{prop:quantifier-rules-fail}
\end{prop}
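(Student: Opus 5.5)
I will prove both halves by exhibiting a single small implication frame together with an interpretation, and letting $m$ be the singleton of the resulting model. Both counterexamples exploit the same fact: $\left\llbracket \forall xFx\right\rrbracket$ depends on every object in $\mathbb{O}$, whereas $\left\llbracket Fa\right\rrbracket$ depends only on $\left\llbracket a\right\rrbracket$. The frame is as follows. There is one unary property $\mathsf{P}$ and two objects $\mathsf{o}_{1},\mathsf{o}_{2}$, so $\mathbb{B}=\{\mathsf{P}\mathsf{o}_{1},\mathsf{P}\mathsf{o}_{2}\}$. The language has a name $a$ with $\left\llbracket a\right\rrbracket=\mathsf{o}_{1}$, a unary $F$ with $\left\llbracket F\right\rrbracket=\mathsf{P}$, and all other names also sent to $\mathsf{o}_{1}$; this is harmless, since we take $\Gamma$ and $\Delta$ empty. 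Reading $\forall xFx$ as $\forall x[x/a]Fa$, the clause for the universal quantifier gives
\[
\left\llbracket \forall xFx\right\rrbracket^{+}=\mathcal{R}(\langle\{\mathsf{P}\mathsf{o}_{1}\},\emptyset\rangle)\sqcup\mathcal{R}(\langle\{\mathsf{P}\mathsf{o}_{2}\},\emptyset\rangle)=\mathcal{R}(\langle\{\mathsf{P}\mathsf{o}_{1},\mathsf{P}\mathsf{o}_{2}\},\emptyset\rangle).
\]
The conclusory role $\left\llbracket \forall xFx\right\rrbracket^{-}$ is the power-symjunction of $\mathcal{R}(\langle\emptyset,\{\mathsf{P}\mathsf{o}_{1}\}\rangle)$ and $\mathcal{R}(\langle\emptyset,\{\mathsf{P}\mathsf{o}_{2}\}\rangle)$, that is, the role of the set $\{\langle\emptyset,\{\mathsf{P}\mathsf{o}_{1}\}\rangle,\langle\emptyset,\{\mathsf{P}\mathsf{o}_{2}\}\rangle,\langle\emptyset,\{\mathsf{P}\mathsf{o}_{1},\mathsf{P}\mathsf{o}_{2}\}\rangle\}$.

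For $\forall L$ I choose $\mathbb{I}$ with $\langle\{\mathsf{P}\mathsf{o}_{1}\},\emptyset\rangle\in\mathbb{I}$ but $\langle\{\mathsf{P}\mathsf{o}_{1},\mathsf{P}\mathsf{o}_{2}\},\emptyset\rangle\notin\mathbb{I}$, and take $\Gamma=\Delta=\emptyset$. By Proposition~\ref{prop:Frames-interpret-themselves} we get $Fa\sttstile{}{m}$. Conversely, $\left\llbracket \forall xFx\right\rrbracket\dttstile{}{}\emptyset$ would require $\bigcup\mathcal{R}(\langle\{\mathsf{P}\mathsf{o}_{1},\mathsf{P}\mathsf{o}_{2}\},\emptyset\rangle)\subseteq\mathbb{I}$. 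Since $\langle\{\mathsf{P}\mathsf{o}_{1},\mathsf{P}\mathsf{o}_{2}\},\emptyset\rangle$ belongs to its own role class, this would put that pair in $\mathbb{I}$, which fails. This mirrors Inf-7 and Inf-8: the extra instance $\mathsf{P}\mathsf{o}_{2}$ acts as a defeater.

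For $\forall R$ I use a second frame with the same bearers, where $\langle\emptyset,\{\mathsf{P}\mathsf{o}_{1}\}\rangle\in\mathbb{I}$ but $\langle\emptyset,\{\mathsf{P}\mathsf{o}_{2}\}\rangle\notin\mathbb{I}$, again with $\Gamma=\Delta=\emptyset$. Then $\sttstile{}{m}Fa$ holds by Proposition~\ref{prop:Frames-interpret-themselves}, and $a$ trivially occurs in no sentence of $\Gamma$ or $\Delta$. To refute $\sttstile{}{m}\forall xFx$, I note that $\bigcup$ of the conclusory role contains the generating set and hence contains $\langle\emptyset,\{\mathsf{P}\mathsf{o}_{2}\}\rangle\notin\mathbb{I}$, so it is not a subset of $\mathbb{I}$. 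Adjoining nothing else leaves this unchanged, because $\sqcup$ with the role of $\{\langle\emptyset,\emptyset\rangle\}$ is the identity.

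The main step requiring care is the general fact that for any $\mathsf{H}$ we have $\mathsf{H}\in\mathcal{R}(\mathsf{H})$, so that $\bigcup\mathcal{R}(\mathsf{H})\supseteq\mathsf{H}$. This is what makes "$\bigcup$ of the role $\subseteq\mathbb{I}$" fail as soon as one member of the generating set is bad; it is immediate from the definition of $\mathcal{R}$ via $\mathsf{RSR}$. A second point to handle is the empty adjunction for empty $\Gamma,\Delta$. If that is awkward, I will instead add a harmless premise such as a sentence interpreted by a bearer $\mathsf{c}$ that occurs in no other good or bad implication except as needed, adjusting $\mathbb{I}$ accordingly.
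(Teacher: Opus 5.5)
Your proposal is correct and takes essentially the same route as the paper. For $\forall R$ you use the same two-object, one-property frame with $\langle\emptyset,\{\mathsf{P}\mathsf{o}_1\}\rangle\in\mathbb{I}$ but $\langle\emptyset,\{\mathsf{P}\mathsf{o}_2\}\rangle\notin\mathbb{I}$, so the power-symjunction in the conclusory role fails; for $\forall L$ you use $\langle\{\mathsf{P}\mathsf{o}_1\},\emptyset\rangle\in\mathbb{I}$ but $\langle\{\mathsf{P}\mathsf{o}_1,\mathsf{P}\mathsf{o}_2\},\emptyset\rangle\notin\mathbb{I}$, so the adjunction in the premisory role fails. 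The only differences are that you specialize the paper's generic $\Delta$ to $\emptyset$ and make explicit the fact $\mathsf{H}\in\mathcal{R}(\mathsf{H})$, which the paper leaves implicit in its appeal to ``analogous reasoning.''
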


\begin{proof}
To show that the adjusted version of $\forall R$ can fail, consider
a model with just two objects, $\mathsf{o_{1}}$ and $\mathsf{o_{2}}$,
and one property, $\mathsf{F}$, such that $\left\langle \emptyset,\{\mathsf{Fo_{1}}\}\right\rangle \in\mathbb{I}$
but not $\left\langle \emptyset,\{\mathsf{Fo_{2}}\}\right\rangle \in\mathbb{I}$.
By Proposition~\ref{prop:Frames-interpret-themselves}, in such a
model, $\thinspace\dttstile{}{}\mathtt{\mathsf{Fo_{1}}}$ but not
$\thinspace\dttstile{}{}\mathtt{\mathsf{Fo_{2}}}$. Hence, not $\thinspace\dttstile{}{}\mathtt{\mathtt{\mathsf{Fo_{1}}}\sqcap\mathsf{Fo_{2}}\sqcap\mathtt{(\mathsf{Fo_{1}}}\sqcup\mathsf{Fo_{2}}})$.
Letting $m$ be the singleton of this model and $\left\llbracket Fo_{1}\right\rrbracket =\mathtt{\mathsf{Fo_{1}}}$,
we get $\thinspace\sttstile{}{m}Fo_{1}$ but not $\thinspace\sttstile{}{m}\forall xFx$.

Regarding $\forall L$, note that there is a model in which $\left\langle \{\mathsf{Fo_{1}}\},\mathtt{D}\right\rangle \in\mathbb{I}$
but not $\left\langle \{\mathsf{Fo_{1}},\mathsf{Fo_{2}}\},\mathtt{D}\right\rangle \in\mathbb{I}$.
By reasoning analogous to that for $\forall R$, letting $m$ be the
singleton of this model with $\left\llbracket \Delta\right\rrbracket =\mathtt{D}$
and $\left\llbracket Fo_{1}\right\rrbracket =\mathtt{\mathsf{Fo_{1}}}$,
we know that $Fo_{1}\sttstile{}{m}\Delta$ but not $\forall xFx\sttstile{}{m}\Delta$.
\end{proof}
For all the desiderata from the introduction except supraclassicality
(SCL), we have now seen that they can be met by the first-order implication-space
semantics above. The arguments in the introduction that showed that
it is nontrivial to meet these desiderata jointly, however, crucially
relied on SCL. It is, hence, important to show that SCL can also be
met.

\section{Recovering Classical First-Order Logic}

The goal of this section is to show how first-order logic can be recovered
in the implication-space semantics that I have presented in the previous
section. To this end, I will define a mapping of the models of standard
Tarskian model theory to the model of implication-space semantics.
I will then show that this mapping preserves to which inferences the
models are counterexamples. Next, I will characterize the class of
implication-space models that correspond to the models of standard
Tarskian model theory. In order to do all this, I first need to put
standard Tarskian model theory on the table, and I need to establish
some facts about it.

\subsection{Recasting Truth in a Tarskian Model}

A Tarskian model is a model that can be constructed from the same
basic building blocks as our implications-space models: objects and
properties. However, \emph{n}-ary properties are taken to be sets
of \emph{n}-tuples of objects, namely the set of tuples that satisfy
the property.

\begin{defn}[Tarskian model]
 $\mathcal{M}_{T}=\left\langle \mathbb{O},\left\llbracket \cdotp\right\rrbracket ^{\mathcal{M}_{T}}\right\rangle $,
i.e., a Tarskian model is a pair of a set of objects, $\mathbb{O}$,
and an interpretation function, $\left\llbracket \cdotp\right\rrbracket ^{\mathcal{M}_{T}}$,
that assigns expressions in a first-order language, $\mathcal{L}$,
interpretants constructed on the basis of $\mathbb{O}$. For any non-zero
integer, $n$, we have the set of all $n$-tuples of objects in $\mathbb{O}$,
which we denote by $\mathbb{P}^{n}$. Interpretations are such that
$\forall a\in\mathsf{A}(\left\llbracket a\right\rrbracket \in\mathbb{O})$
and $\forall F\in\mathsf{P}^{n}(\left\llbracket F\right\rrbracket \in\mathbb{P}^{n})$.
The interpretations of a set is the set of the interpretations of
its elements.
\end{defn}

To formulate a version of standard Tarskian model theory, we introduce
variable assignments, which are functions from variables to objects,
$\mathfrak{V}:\mathsf{X}\mapsto\mathbb{O}$. The variable assignment
$[x/\mathsf{o}]\mathfrak{v}$ is like the variable assignment $\mathfrak{v}$
except that it assigns the object $\mathsf{o}\in\mathbb{O}$ to the
variable $x\in\mathsf{X}$. We write $\left\llbracket \cdotp\right\rrbracket ^{\mathfrak{v}}$
for an interpretation function together with a variable assignment.
A model with a variable assignment satisfy a formula $\phi$ iff $\left\llbracket \phi\right\rrbracket ^{\mathfrak{v}}=1$.
And $\left\llbracket \phi\right\rrbracket ^{\mathfrak{v}}=0$ iff
$\left\llbracket \phi\right\rrbracket ^{\mathfrak{v}}\not=1$. The
semantic clauses are as follows:

\begin{itemize}
\item $\left\llbracket a\right\rrbracket ^{\mathfrak{v}}=\left\llbracket a\right\rrbracket $,
$\left\llbracket F\right\rrbracket ^{\mathfrak{v}}=\left\llbracket F\right\rrbracket $,
and $\left\llbracket x\right\rrbracket ^{\mathfrak{v}}=\mathfrak{v}(x)$.
\item $\forall F\nu_{1},...,\nu_{i}\in\mathsf{At}\left\llbracket F\nu_{1},...,\nu_{i}\right\rrbracket ^{\mathfrak{v}}=1$
iff $\left\langle \left\llbracket \nu_{1}\right\rrbracket ^{\mathfrak{v}},...,\left\llbracket \nu_{i}\right\rrbracket ^{\mathfrak{v}}\right\rangle \in\left\llbracket F\right\rrbracket ^{\mathfrak{v}}$.
\item $\left\llbracket \phi\wedge\psi\right\rrbracket ^{\mathfrak{v}}=1$
iff $\left\llbracket \phi\right\rrbracket ^{\mathfrak{v}}=1$ and
$\left\llbracket \psi\right\rrbracket ^{\mathfrak{v}}=1$.
\item $\left\llbracket \neg\phi\right\rrbracket ^{\mathfrak{v}}=1$ iff
$\left\llbracket \phi\right\rrbracket ^{\mathfrak{v}}=0$.
\item $\left\llbracket \forall x_{i}\phi\right\rrbracket ^{\mathfrak{v}}=1$
iff for every $\mathsf{o}\in\mathbb{O}$ $\left\llbracket \phi\right\rrbracket ^{[x_{i}/\mathsf{o}]\mathfrak{v}}=1$.
\end{itemize}

A sentence is true in a model if it is a closed formula that is satisfied
by the empty variable assignment together with that model. This is,
of course, a formulation of classical first-order logic (without identity
or functors, treating the conditional, disjunction, and the existential
quantifier as defined): $\Gamma\models_{FOL}\Delta$ iff, in every
Tarskian model in which every sentence in $\Gamma$ is true, at least
one of the sentences in $\Delta$ is true.

It will prove useful for our purposes to characterize truth-in-a-model
in a somewhat unfamiliar way. To do so, I introduce the notion of
states in Tarskian models.\footnote{One can think of this as a way to bring our formulation of Tarskian
model theory closer to truth-maker theory, which is useful because
implication-space semantics has known connections to truth-maker theory.
For reasons of space, I will not make these underlying connections
explicit here.}

\begin{defn}[States]
 Atomic states: For all $\mathsf{P}\in\mathbb{P}^{n}$ and $\overrightarrow{\mathsf{o}}\in\mathbb{O}^{n}$,
the pair $\left\langle \mathsf{P},\overrightarrow{\mathsf{o}}\right\rangle $
is an atomic state. Complex states: if $\Phi$ and $\Psi$ are states,
then so are $\sim\Phi$, and $\Phi\varcurlywedge\Psi$, and $\prod(\mathsf{o}_{i})\Phi$.
\end{defn}

Thus, the atomic states of a Tarskian model are the bearers of the
implication space that is build on the same objects and properties.
I do not specify what kinds of objects complex states are. As we will
see below, the only thing that will matter about such states is that
they can be partitioned in a certain way, and for that it does not
matter what kinds of objects they are.

We can substitute objects for each other in states. For atomic states,
the substitution of objects is the same as for bearers. For complex
states, substitution ``sees through'' the composition of the states,
in the following sense.

\begin{defn}[{Substitution of objects in states, $[\cdotp/\cdotp]\cdotp$}]
 For any atomic state $\mathsf{P}\overrightarrow{\mathsf{o}}$ and
objects $\mathtt{o}_{j}$ and $\mathtt{o}_{k}$, $[\mathtt{o}_{j}/\mathtt{o}_{k}]\mathsf{P}\overrightarrow{\mathsf{o}}$
is the state like $\mathsf{P}\overrightarrow{\mathsf{o}}$ except
that $\mathtt{o}_{k}$ is replaced by $\mathtt{o}_{j}$ everywhere
in $\overrightarrow{\mathsf{o}}$. 

If $\Phi$ is a complex state, then (i) $[\mathtt{o}_{j}/\mathtt{o}_{k}](\sim\Phi)=\sim[\mathtt{o}_{j}/\mathtt{o}_{k}]\Phi$,
and (ii) $[\mathtt{o}_{j}/\mathtt{o}_{k}](\Phi\varcurlywedge\Psi)=[\mathtt{o}_{j}/\mathtt{o}_{k}]\Phi\varcurlywedge[\mathtt{o}_{j}/\mathtt{o}_{k}]\Psi$,
and (iii) $[\mathtt{o}_{j}/\mathtt{o}_{k}](\prod(\mathsf{o}_{i})\Phi)=\prod(\mathsf{o}_{i})([\mathtt{o}_{j}/\mathtt{o}_{k}]\Phi)$
if $k\neq i$, and $[\mathtt{o}_{j}/\mathtt{o}_{k}](\prod(\mathsf{o}_{i})\Phi)=(\prod(\mathsf{o}_{i})\Phi)$
if $k=i$.

For a set of states $\mathsf{A}$, $[\mathtt{o}_{j}/\mathtt{o}_{k}]\mathsf{A}$
is the set of $[\mathtt{o}_{j}/\mathtt{o}_{k}]\Phi$ for all $\Phi\in\mathsf{A}$.
\end{defn}

We can now recursively define a partition of these states into two
sets, which I call $\hat{T}$ and $\hat{F}$. Intuitively, $\hat{T}$
is the set of sentences that are true in a model, and $\hat{F}$ is
the set of sentences that are false in a model.

\begin{defn}[Partition of states, $\left\langle \hat{T},\hat{F}\right\rangle $,
and its atomic basis, $\left\langle \hat{T_{0}},\hat{F_{0}}\right\rangle $ ]
 Atomic base partition: for atomic states, if $\overrightarrow{\mathsf{o}}\in\mathsf{P}$,
then $\left\langle \mathsf{P},\overrightarrow{\mathsf{o}}\right\rangle \in\hat{T_{0}}$.
If $\overrightarrow{\mathsf{o}}\not\in\mathsf{P}$, then $\left\langle \mathsf{P},\overrightarrow{\mathsf{o}}\right\rangle \in\hat{F_{0}}$.
For complex states, $\hat{T_{0}}\subset\hat{T}$, $\hat{F_{0}}\subset\hat{F}$,
and: (i) $\sim\Phi\in\hat{T}$ if $\Phi\in\hat{F}$; and $\sim\Phi\in\hat{F}$
if $\Phi\in\hat{T}$. (ii) $\Phi\varcurlywedge\text{\ensuremath{\Psi}}\in\hat{T}$
iff $\Phi\in\hat{T}$ and $\text{\ensuremath{\Psi}}\in\hat{T}$; and
$\Phi\varcurlywedge\text{\ensuremath{\Psi}}\ensuremath{\in}\hat{F}$
iff $\Phi\in\hat{F}$ or $\text{\ensuremath{\Psi}}\ensuremath{\in}\hat{F}$.
(iii) $\prod(\mathsf{o}_{i})\Phi\in\hat{T}$ if $\forall\mathsf{o}\in\mathbb{O}([\mathtt{o}/\mathtt{o}_{i}]\Phi\in\hat{T})$;
and $\prod(\mathsf{o}_{i})\Phi\in\hat{F}$ if $\exists\mathsf{o}\in\mathbb{O}([\mathtt{o}/\mathtt{o}_{i}]\Phi\in\hat{F})$.
\end{defn}

We can extend our interpretation function (without variable assignment)
to complex sentences as follows:
\begin{itemize}
\item $\forall Fa_{1},...,a_{i}\in\mathsf{At_{Sent}}\left\llbracket Fa_{1},...,a_{i}\right\rrbracket =\left\langle \left\llbracket F\right\rrbracket ,\left\langle \left\llbracket a_{1}\right\rrbracket ,...,\left\llbracket a_{i}\right\rrbracket \right\rangle \right\rangle $.
\item $\left\llbracket \phi\wedge\psi\right\rrbracket =\left\llbracket \phi\right\rrbracket \varcurlywedge\left\llbracket \psi\right\rrbracket $.
\item $\left\llbracket \neg\phi\right\rrbracket =\sim\left\llbracket \phi\right\rrbracket $.
\item $\left\llbracket \forall x_{i}[x_{i}/a]\phi\right\rrbracket =\prod(\left\llbracket a\right\rrbracket )\left\llbracket \phi\right\rrbracket $.
\end{itemize}
This allows us to characterize truth in a Tarskian model as follows:

\begin{lem}
$\chi$ is true in a Tarskian model, $\mathcal{M}_{T}$, iff $\left\llbracket \chi\right\rrbracket ^{\mathcal{M}_{T}}\in\hat{T}$
in that model.\label{lem:T-and-F-track-truth-in-T-models}
\end{lem}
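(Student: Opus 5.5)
I will prove the lemma by induction on the complexity of the sentence $\chi$, where complexity is the number of occurrences of $\neg$, $\wedge$ and $\forall$. Since the quantifier clause replaces a name by an object rather than by another name, the induction hypothesis must be strong enough to cover instances of $\phi$ with arbitrary objects in the place of $\left\llbracket a\right\rrbracket$, including objects that no name denotes. So I will first strengthen the claim to formulae and assignments.

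\textbf{Strengthened claim.} Fix a variable assignment $\mathfrak{v}$. For a formula $\phi$ with free variables among $x_{1},\dots,x_{k}$, let its state $\left\llbracket \phi\right\rrbracket^{\mathfrak{v}}_{st}$ be defined by the same clauses as for sentences, except that each free variable $x_{j}$ is sent to $\mathfrak{v}(x_{j})$. The claim is that $\left\llbracket \phi\right\rrbracket^{\mathfrak{v}}=1$ iff $\left\llbracket \phi\right\rrbracket^{\mathfrak{v}}_{st}\in\hat{T}$, and $\left\llbracket \phi\right\rrbracket^{\mathfrak{v}}=0$ iff $\left\llbracket \phi\right\rrbracket^{\mathfrak{v}}_{st}\in\hat{F}$. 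Both directions are needed because of negation. The lemma is the special case of sentences, for which $\mathfrak{v}$ is irrelevant.

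\textbf{Preliminary fact.} I first check that every state lands in exactly one of $\hat{T}$ and $\hat{F}$, by a routine induction on states. For atomic states this is the base partition. The $\sim$ and $\varcurlywedge$ clauses preserve exclusivity and exhaustiveness. For $\prod$, either every instance is in $\hat{T}$ or some instance is in $\hat{F}$, and by the induction hypothesis on the instances these alternatives are exclusive and exhaustive. Reading the definition as the least partition closed under the clauses, these are then iff's.

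\textbf{Induction on formulae.}
\begin{itemize}
\item \emph{Atomic case.} $\left\langle \left\llbracket \nu_{1}\right\rrbracket^{\mathfrak{v}},\dots\right\rangle \in\left\llbracket F\right\rrbracket$ iff the atomic state lies in $\hat{T_{0}}\subseteq\hat{T}$, and otherwise it lies in $\hat{F_{0}}\subseteq\hat{F}$.
\item \emph{Negation and conjunction.} These follow directly from clauses (i) and (ii) together with the induction hypothesis.
\end{itemize}

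\textbf{Quantifier case (main obstacle).} Write $\forall x_{i}\phi$ as $\forall x_{i}[x_{i}/a]\phi'$, where $a$ is a fresh name not occurring in $\phi$, so that $\phi'=[a/x_{i}]\phi$. Interpret $a$ by some object $\mathsf{o}_{a}$ chosen outside the objects in play; if $\mathbb{O}$ offers no such object, I would instead treat $a$ as a placeholder.
\begin{enumerate}
\item I will prove a substitution lemma: the state of $\phi'$ with $\mathsf{o}$ substituted for $\left\llbracket a\right\rrbracket$, that is $[\mathsf{o}/\left\llbracket a\right\rrbracket]\left\llbracket \phi'\right\rrbracket$, equals the state of $\phi$ under $[x_{i}/\mathsf{o}]\mathfrak{v}$.
\item This is a separate induction on $\phi$. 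It uses the clause that substitution passes through $\sim$ and $\varcurlywedge$. At a nested $\prod$ it uses the rule that substitution is blocked when the bound object coincides with the substituted one. This is where freshness of $a$ and the variable-capture conventions must be handled carefully.
\item With the substitution lemma in hand, the $\forall$ clause of Tarskian satisfaction quantifies over all $\mathsf{o}\in\mathbb{O}$. Clause (iii) of the partition quantifies over the same range. The induction hypothesis applied to $\phi$ under each $[x_{i}/\mathsf{o}]\mathfrak{v}$ matches the two clauses exactly, in both the $\hat{T}$ and the $\hat{F}$ direction.
\end{enumerate}

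I expect the substitution lemma, and in particular the bookkeeping of freshness and object coincidence, to be the only delicate part of the argument.
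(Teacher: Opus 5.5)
You are right that the paper's proof skips a step. It inducts on sentences under the empty assignment, yet in the $\forall$ case it applies the hypothesis to the open formula $\phi$ under $[x_i/\mathsf{o}]\mathfrak{v}$, i.e.\ to states $[\mathtt{o}/\llbracket a_j\rrbracket]\llbracket\bar\phi\rrbracket$ that need not be the state of any sentence. But your repair has a genuine gap in the nested-$\prod$ case of your substitution lemma, and it is not the coincidence problem your freshness device addresses. The blocking rule in object substitution covers only the case where the object being \emph{replaced} is the binder. Nothing stops the \emph{replacing} object from being the binder. Clause (iii) of the partition instantiates every $\mathsf{o}\in\mathbb{O}$, and every binder $\llbracket b\rrbracket$ lies in $\mathbb{O}$, so capture is unavoidable.

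Take $\chi=\forall x\forall y\,Rxy$ with fresh names $a,b$ and $\mathsf{o}_a=\llbracket a\rrbracket\neq\llbracket b\rrbracket=\mathsf{o}_b$. The state is $\prod(\mathsf{o}_a)\prod(\mathsf{o}_b)\langle R,\langle\mathsf{o}_a,\mathsf{o}_b\rangle\rangle$. Its instance at $\mathsf{o}=\mathsf{o}_b$ is $\prod(\mathsf{o}_b)\langle R,\langle\mathsf{o}_b,\mathsf{o}_b\rangle\rangle$, which tests $\forall y\,Ryy$ instead of $\forall y\,R\mathsf{o}_b y$. This breaks your argument either way:
\begin{itemize}
\item If the inner binder is fixed, your strengthened claim already fails for $\forall y\,Rxy$ under $\mathfrak{v}(x)=\mathsf{o}_b$.
\item If you pick the inner binder afresh for each assignment, the identity in your step 1 fails at $\mathsf{o}=\mathsf{o}_b$, and it is the left-hand side that clause (iii) actually evaluates.
\end{itemize}
If $\mathsf{o}_a=\mathsf{o}_b$, blocking makes the state test $\forall x\,Rxx$. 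Also, you cannot choose $\llbracket a\rrbracket$ outside the objects in play: it is fixed by $\mathcal{M}_T$. So take $\mathbb{O}=\{\mathsf{o}_1,\mathsf{o}_2\}$, every name denoting $\mathsf{o}_1$, and $R$ the diagonal. Then $\chi$ is false while its state lies in $\hat{T}$. Under the paper's literal definitions the lemma fails, and the paper's own proof has the same hole.

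The fix is to make your placeholder fallback the rule rather than the exception. Bind each occurrence of $\prod$ by its own placeholder object outside $\mathbb{O}$; in effect, give states genuine bound variables. Then no instantiating $\mathsf{o}\in\mathbb{O}$ can coincide with a binder, and no two quantifiers share one. With that change, three parts of your plan go through: the strengthened induction over formulae and assignments, the substitution lemma, and the exclusivity/exhaustiveness fact (stated for closed states). Together they give a rigorous version of the paper's argument. What you then prove, however, is the lemma for a corrected state assignment, not for $\llbracket\forall x_i[x_i/a]\phi\rrbracket=\prod(\llbracket a\rrbracket)\llbracket\phi\rrbracket$ with $\llbracket a\rrbracket\in\mathbb{O}$.
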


\begin{proof}
It suffices to show that the empty variable assignment satisfies $\chi$
iff $\left\llbracket \chi\right\rrbracket \in\hat{T}$ and otherwise
$\left\llbracket \chi\right\rrbracket \in\hat{F}$. We argue by induction
on the complexity of $\chi$. The base case in which $\chi$ is an
atomic sentence is immediate.

If $\chi$ is $\neg\phi$, then $\sim\left\llbracket \phi\right\rrbracket \in\hat{T}$
iff $\left\llbracket \phi\right\rrbracket \in\hat{F}$. By our induction
hypothesis, this means that $\left\llbracket \phi\right\rrbracket ^{\emptyset}=0$
and so $\left\llbracket \neg\phi\right\rrbracket ^{\emptyset}=1$.
If $\chi$ is $\phi\wedge\psi$, then $\left\llbracket \phi\wedge\psi\right\rrbracket \in\hat{T}$
iff $\left\llbracket \phi\right\rrbracket \in\hat{T}$ and $\left\llbracket \psi\right\rrbracket \in\hat{T}$.
By our induction hypothesis, this happens iff $\left\llbracket \phi\right\rrbracket ^{\emptyset}=1$
and $\left\llbracket \psi\right\rrbracket ^{\emptyset}=1$, which
happens iff $\left\llbracket \phi\wedge\psi\right\rrbracket ^{\emptyset}=1$.
If $\chi$ is $\forall x_{i}\phi$, take a name $a_{j}$ that does
not occur in $\chi$. Then $[a_{j}/x_{i}]\phi=_{df.}\bar{\phi}$ is
like $\phi$ except that it is a sentence in which all unbound occurrences
of $x_{i}$ in $\phi$ have been replaced by the name $a_{j}$. We
must show that $\left\llbracket \forall x_{i}\phi\right\rrbracket ^{\emptyset}=1$
iff $\left\llbracket \forall x_{i}[x_{i}/a_{j}]\bar{\phi}\right\rrbracket \in\hat{T}$.
Now, $\left\llbracket \forall x_{i}\phi\right\rrbracket ^{\emptyset}=1$
iff for every $\mathsf{o}\in\mathbb{O}$ $\left\llbracket \phi\right\rrbracket ^{[x_{i}/\mathsf{o}]\mathfrak{v}}=1$.
By our induction hypothesis this happens iff $\forall\mathsf{o}\in\mathbb{O}([\mathtt{o}/\left\llbracket a_{j}\right\rrbracket ]\left\llbracket \bar{\phi}\right\rrbracket \in\hat{T})$,
i.e., iff $\prod(\left\llbracket a_{j}\right\rrbracket )\left\llbracket \bar{\phi}\right\rrbracket \in\hat{T}$,
which happens iff $\left\llbracket \forall x_{i}[x_{i}/a_{j}]\bar{\phi}\right\rrbracket \in\hat{T}$.
\end{proof}

This way of characterizing truth-in-a-model may seem unnecessarily
complicated. It will, however, prove useful in mapping Tarskian models
into implication-space models, which I will do in the next subsection.

\subsection{Mapping Tarskian Models into Implication-Space Models}

In this section, I introduce a mapping of Tarskian models into implication-space
models that preserves to which inferences a model is a counterexample.
In this way each Tarskian model is mapped to an implication-space
model that I will call its ``Ersatz Tarskian model.''

Recall that Tarskian models and implication-space models are both
build up from a set of objects and \emph{n}-ary properties (identified
with sets of \emph{n}-tupels of objects in Tarskian models) as well
as an interpretation function. We can exploit this commonality and
define Ersatz Tarskian models as follows:

\begin{defn}[Ersatz Tarskian Models]
 Given a Tarskian model, $\mathcal{M}_{T}$, its Ersatz Tarskian
implication-space model, $\mathcal{M}_{I}$, is the implication-space
model such that (i) the two models share the same objects $\mathbb{O}$
and properties $\mathbb{P}^{n}$ for all $n$, and (ii) $\mathbb{I}=\{\left\langle x,y\right\rangle \mid x\not\subseteq\hat{T_{0}}\thinspace\lor\thinspace y\not\subseteq\hat{F_{0}}\}$,
and (iii) their interpretations are parallel, in the sense that, for
all atomic sentences $\phi$, $\left\llbracket \phi\right\rrbracket ^{\mathcal{M}_{I}}=\mathcal{R}(\left\llbracket \phi\right\rrbracket ^{\mathcal{M}_{T}})$.

The following lemma will be useful. The intuitive idea behind the
lemma is that true sentences counterexample on the left and false
sentences counterexample on the right.
\end{defn}

\begin{lem}
Let $\mathcal{M}$ be the Ersatz Tarskian implication-space model
of the Tarskian model $\mathcal{M}_{T}$, then: if $\left\llbracket \phi\right\rrbracket ^{\mathcal{M}_{T}}\in\hat{T}$,
then $\left\llbracket \Gamma\cup\{\phi\}\right\rrbracket ^{\mathcal{M}}\not\dttstile{}{}\left\llbracket \Delta\right\rrbracket ^{\mathcal{M}}$
iff $\left\llbracket \Gamma\right\rrbracket ^{\mathcal{M}}\not\dttstile{}{}\left\llbracket \Delta\right\rrbracket ^{\mathcal{M}}$;
and if $\left\llbracket \phi\right\rrbracket ^{\mathcal{M}_{T}}\in\hat{F}$,
then $\left\llbracket \Gamma\right\rrbracket ^{\mathcal{M}}\not\dttstile{}{}\left\llbracket \Delta\cup\{\phi\}\right\rrbracket ^{\mathcal{M}}$
iff $\left\llbracket \Gamma\right\rrbracket ^{\mathcal{M}}\not\dttstile{}{}\left\llbracket \Delta\right\rrbracket ^{\mathcal{M}}$.\label{lem:T-counterexamples-on-left-and-F-on-right}
\end{lem}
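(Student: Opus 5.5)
The plan is to first compute what implicational roles look like in an Ersatz Tarskian frame, and then prove by induction on sentences a stronger claim from which the lemma drops out at once.

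Call a candidate implication $\langle x,y\rangle$ \emph{clean} iff $x\subseteq\hat{T_{0}}$ and $y\subseteq\hat{F_{0}}$. By clause (ii) of the definition of Ersatz Tarskian models, $\langle x,y\rangle\in\mathbb{I}$ iff it is not clean. Since unions of clean pairs are clean, I will show:
\begin{itemize}
\item if $\mathsf{H}$ contains a clean pair, then $\mathsf{RSR}(\mathsf{H})$ is exactly the set of non-clean pairs, which is $\mathsf{RSR}(\{\langle\emptyset,\emptyset\rangle\})$;
\item if $\mathsf{H}$ contains no clean pair, then $\mathsf{RSR}(\mathsf{H})=\mathbb{S}$.
\end{itemize}
So there are only two roles. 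Call them $\mathtt{N}=\mathcal{R}(\langle\emptyset,\emptyset\rangle)$ (``neutral'') and $\mathtt{Top}$, the role of sets with no clean member. Moreover, $\bigcup\mathtt{X}\subseteq\mathbb{I}$ iff $\mathtt{X}=\mathtt{Top}$. I then read off the operations:
\begin{itemize}
\item adjunction: $\mathtt{N}\sqcup\mathtt{N}=\mathtt{N}$, and anything adjoined with $\mathtt{Top}$ is $\mathtt{Top}$;
\item symjunction: it is $\mathtt{N}$ iff some argument is $\mathtt{N}$;
\item power-symjunction of a set $\mathtt{R}$: it is $\mathtt{N}$ iff some member of $\mathtt{R}$ is $\mathtt{N}$, since singleton subsets occur among the adjunctions.
\end{itemize}
For the degenerate empty-domain case, I take the empty adjunction to be $\mathtt{N}$ and the empty power-symjunction to be $\mathcal{R}(\emptyset)=\mathtt{Top}$. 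Consequently $\mathtt{G}\dttstile{}{}\mathtt{D}$ holds iff at least one of the premisory roles of $\mathtt{G}$ or conclusory roles of $\mathtt{D}$ is $\mathtt{Top}$.

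The core claim, proved by induction on the complexity of $\phi$, is the following. If $\left\llbracket \phi\right\rrbracket ^{\mathcal{M}_{T}}\in\hat{T}$, then $\left\llbracket \phi\right\rrbracket ^{+}=\mathtt{N}$ and $\left\llbracket \phi\right\rrbracket ^{-}=\mathtt{Top}$. If $\left\llbracket \phi\right\rrbracket ^{\mathcal{M}_{T}}\in\hat{F}$, then $\left\llbracket \phi\right\rrbracket ^{+}=\mathtt{Top}$ and $\left\llbracket \phi\right\rrbracket ^{-}=\mathtt{N}$. The cases go as follows.
\begin{itemize}
\item Atoms: a true bearer $\mathsf{a}\in\hat{T_{0}}$ makes $\langle\{\mathsf{a}\},\emptyset\rangle$ clean and $\langle\emptyset,\{\mathsf{a}\}\rangle$ unclean. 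The false case is dual.
\item Negation: it swaps both the roles and the $\hat{T}/\hat{F}$ membership.
\item Conjunction: the premisory role is an adjunction, so it is $\mathtt{N}$ iff both conjuncts' premisory roles are $\mathtt{N}$, i.e.\ iff both conjuncts are in $\hat{T}$. The conclusory role is a power-symjunction, so it is $\mathtt{N}$ iff some conjunct's conclusory role is $\mathtt{N}$, i.e.\ iff some conjunct is in $\hat{F}$. This matches clause (ii) of the partition.
\item Universal quantifier: the same computation with the family indexed by $\mathsf{o}\in\mathbb{O}$ matches clause (iii). Here I use Lemma~\ref{lem:T-and-F-track-truth-in-T-models} and the state substitution $[\mathtt{o}/\left\llbracket a\right\rrbracket ]$.
\end{itemize}

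Given the claim, the lemma is immediate. Adding a true $\phi$ to the premises adjoins $\mathtt{N}$, which does not change whether the total adjunction is $\mathtt{Top}$. Adding a false $\phi$ to the conclusions likewise adjoins $\mathtt{N}$. In both cases the status of $\left\llbracket \Gamma\right\rrbracket \dttstile{}{}\left\llbracket \Delta\right\rrbracket $ is preserved.

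The main obstacle is the quantifier step. There the instances $[\mathtt{o}/\left\llbracket a\right\rrbracket ]\left\llbracket \phi\right\rrbracket ^{\pm}$ are not interpretations of sentences in $\mathcal{M}$ itself, and substitution acts on bearers rather than on roles. I would handle this by strengthening the induction to range over all Ersatz models that share the frame and differ only in the denotations of names. I would choose $a$ fresh, so that replacing $\left\llbracket a\right\rrbracket $ by $\mathsf{o}$ in the bearers underlying $\left\llbracket \phi\right\rrbracket $ coincides with interpreting $\phi$ in the variant model where $a$ denotes $\mathsf{o}$. That variant is the Ersatz model of the corresponding Tarskian variant, so the induction hypothesis applies to it. To avoid clashes with other names that denote the same object, I would read the substitution as acting at the argument positions contributed by $a$; I expect this to require care but no new ideas.
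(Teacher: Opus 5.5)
Your argument is correct in substance, but it is organized differently from the paper's, and the difference matters at the quantifier step. Your role $\mathtt{N}$ is exactly the paper's condition that $\bigcup$ of a role contains a pair $\langle\mathsf{C},\mathsf{D}\rangle$ with $\mathsf{C}\subseteq\hat{T_{0}}$ and $\mathsf{D}\subseteq\hat{F_{0}}$. The paper proves only half of your core claim: a true sentence has such a pair in its premisory role, and a false one has such a pair in its conclusory role. It takes the other direction of each biconditional from the monotonicity of $\mathbb{I}$, without spelling out how that transfers to $\dttstile{}{}$. Your preliminary computation that an Ersatz frame has only two roles buys more. The role operations become independent of representatives, you get an exact criterion for $\dttstile{}{}$, and the monotonicity half is proved rather than asserted. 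Your $\neg$ and $\wedge$ steps are then the paper's computations in different notation.

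At the quantifier step the paper inducts on the complexity of \emph{states}, not sentences. Each instance $[\mathsf{o}/\llbracket a\rrbracket]\llbracket\eta\rrbracket^{\mathcal{M}_{T}}$ is itself a state of lower complexity. The same literal object substitution acts on that state and on the associated role, so the induction hypothesis applies directly. No variant models are needed, and no appeal to Lemma~\ref{lem:T-and-F-track-truth-in-T-models}.

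Your route genuinely depends on the positional reading you propose. The paper's definition replaces $\llbracket a\rrbracket$ \emph{everywhere}. Under that definition, substituting into the bearers of $\phi$ does not yield $\phi$'s interpretation in the $a$-variant whenever another name in $\phi$ co-denotes with $a$. Choosing $a$ fresh cannot always prevent this, e.g.\ when the names in $\phi$ already denote every object. Lemma~\ref{lem:T-and-F-track-truth-in-T-models}, which you also invoke, holds only under the positional reading for the same reason. So, as written, you prove the lemma for a repaired semantics. To get it for the paper's literal clauses, keep your two-role analysis but run the induction over states, or over compositionally built contents, as the paper does. Your co-denotation worry is nonetheless well founded: it is Lemma~\ref{lem:T-and-F-track-truth-in-T-models}, and hence supraclassicality, that actually need the positional reading.
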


\begin{proof}
Since $\mathbb{I}=\{\left\langle x,y\right\rangle \mid x\not\subseteq\hat{T_{0}}\thinspace\lor\thinspace y\not\subseteq\hat{F_{0}}\}$,
Ersatz Tarskian models are monotonic. So if $\left\llbracket \Gamma\cup\{\phi\}\right\rrbracket ^{\mathcal{M}}\not\dttstile{}{}\left\llbracket \Delta\right\rrbracket ^{\mathcal{M}}$
or $\left\llbracket \Gamma\right\rrbracket ^{\mathcal{M}}\not\dttstile{}{}\left\llbracket \Delta\cup\{\phi\}\right\rrbracket ^{\mathcal{M}}$,
then $\left\llbracket \Gamma\right\rrbracket ^{\mathcal{M}}\not\dttstile{}{}\left\llbracket \Delta\right\rrbracket ^{\mathcal{M}}$.
It hence suffices to show that if $\left\llbracket \Gamma\right\rrbracket ^{\mathcal{M}}\not\dttstile{}{}\left\llbracket \Delta\right\rrbracket ^{\mathcal{M}}$,
then (i) if $\left\llbracket \phi\right\rrbracket ^{\mathcal{M}_{T}}\in\hat{T}$,
then $\left\llbracket \Gamma\cup\{\phi\}\right\rrbracket ^{\mathcal{M}}\not\dttstile{}{}\left\llbracket \Delta\right\rrbracket ^{\mathcal{M}}$
and (ii) if $\left\llbracket \phi\right\rrbracket ^{\mathcal{M}_{T}}\in\hat{F}$,
then $\left\llbracket \Gamma\right\rrbracket ^{\mathcal{M}}\not\dttstile{}{}\left\llbracket \Delta\cup\{\phi\}\right\rrbracket ^{\mathcal{M}}$.
So let us suppose that $\left\llbracket \Gamma\right\rrbracket ^{\mathcal{M}}\not\dttstile{}{}\left\llbracket \Delta\right\rrbracket ^{\mathcal{M}}$,
i.e., $\bigcup(\stackrel[i=0]{n}{\sqcup}\mathtt{g}_{i}^{+}\thinspace\sqcup\thinspace\stackrel[j=0]{m}{\sqcup}\mathtt{d}_{j}^{-})\not\subseteq\mathbb{I}$
with $\left\llbracket \Gamma\right\rrbracket ^{\mathcal{M}}=\mathtt{G}=\{\mathtt{g}_{0},...,\mathtt{g}_{n}\}$
and $\left\llbracket \Delta\right\rrbracket ^{\mathcal{M}}=\mathtt{D}=\{\mathtt{d}_{0},...,\mathtt{d}_{m}\}$.
Since $\mathbb{I}=\{\left\langle x,y\right\rangle \mid x\not\subseteq\hat{T_{0}}\thinspace\lor\thinspace y\not\subseteq\hat{F_{0}}\}$,
we know that $\mathsf{A}_{i}\subseteq\hat{T_{0}}$ and $\mathsf{B}_{i}\subseteq\hat{F_{0}}$,
for all $\left\langle \mathsf{A}_{i},\mathsf{B}_{i}\right\rangle \in\bigcup(\stackrel[i=0]{n}{\sqcup}\mathtt{g}_{i}^{+}\thinspace\sqcup\thinspace\stackrel[j=0]{m}{\sqcup}\mathtt{d}_{j}^{-})$.

Recall that we can use the union of a role as our canonical representative
of the role. Hence, it suffices to show that for all $\left\langle \mathsf{A}_{i},\mathsf{B}_{i}\right\rangle \in\bigcup(\stackrel[i=0]{n}{\sqcup}\mathtt{g}_{i}^{+}\thinspace\sqcup\thinspace\stackrel[j=0]{m}{\sqcup}\mathtt{d}_{j}^{-})$,
(i) if $\left\llbracket \phi\right\rrbracket ^{\mathcal{M}_{T}}\in\hat{T}$
then $\{\left\langle \mathsf{A}_{i}\cup\mathsf{C},\mathsf{B}_{i}\cup\mathsf{D}\right\rangle \mid\left\langle \mathsf{C},\mathsf{D}\right\rangle \in\bigcup\left\llbracket \phi\right\rrbracket ^{\mathcal{M}_{I}+}\}\not\subseteq\mathbb{I}$
and (ii) if $\left\llbracket \phi\right\rrbracket ^{\mathcal{M}_{T}}\in\hat{F}$
then $\{\left\langle \mathsf{A}_{i}\cup\mathsf{C},\mathsf{B}_{i}\cup\mathsf{D}\right\rangle \mid\left\langle \mathsf{C},\mathsf{D}\right\rangle \in\bigcup\left\llbracket \phi\right\rrbracket ^{\mathcal{M}_{I}-}\}\not\subseteq\mathbb{I}$.
Since $\mathbb{I}=\{\left\langle x,y\right\rangle \mid x\not\subseteq\hat{T_{0}}\thinspace\lor\thinspace y\not\subseteq\hat{F_{0}}\}$,
this means that (i), whenever $\left\llbracket \phi\right\rrbracket ^{\mathcal{M}_{T}}\in\hat{T}$,
there is some $\left\langle \mathsf{C},\mathsf{D}\right\rangle \in\bigcup\left\llbracket \phi\right\rrbracket ^{\mathcal{M}_{I}+}$
such that $\mathsf{C}\subseteq\hat{T_{0}}$ and $\mathsf{D}\subseteq\hat{F_{0}}$;
and (ii), whenever $\left\llbracket \phi\right\rrbracket ^{\mathcal{M}_{T}}\in\hat{F}$,
there is some $\left\langle \mathsf{C},\mathsf{D}\right\rangle \in\bigcup\left\llbracket \phi\right\rrbracket ^{\mathcal{M}_{I}-}$
such that $\mathsf{C}\subseteq\hat{T_{0}}$ and $\mathsf{D}\subseteq\hat{F_{0}}$.
We now argue for this by induction on the complexity of $\left\llbracket \phi\right\rrbracket ^{\mathcal{M}_{T}}$.
Note that this is not the complexity of a sentence but the complexity
of a state; note also, however, that the complexity of a sentence
is equal to the complexity of the state that interprets it, according
to our new way of looking at truth in a Tarskian model.

\emph{Base case}: We know that, for all atomic states $\left\llbracket \phi\right\rrbracket ^{\mathcal{M}_{T}}$
and, hence, atomic sentences $\phi$ that $\left\llbracket \phi\right\rrbracket ^{\mathcal{M}_{I}}=\mathcal{R}(\left\llbracket \phi\right\rrbracket ^{\mathcal{M}_{T}})$
so that $\left\llbracket \phi\right\rrbracket ^{\mathcal{M}_{I}+}=\mathcal{R}\left\langle \{\left\llbracket \phi\right\rrbracket ^{\mathcal{M}_{T}}\},\emptyset\right\rangle $
and $\left\llbracket \phi\right\rrbracket ^{\mathcal{M}_{I}-}=\mathcal{R}\left\langle \emptyset,\{\left\llbracket \phi\right\rrbracket ^{\mathcal{M}_{T}}\}\right\rangle $.
Since $\left\langle \{\left\llbracket \phi\right\rrbracket ^{\mathcal{M}_{T}}\},\emptyset\right\rangle \in\mathcal{R}\left\langle \{\left\llbracket \phi\right\rrbracket ^{\mathcal{M}_{T}}\},\emptyset\right\rangle $,
we know that if $\left\llbracket \phi\right\rrbracket ^{\mathcal{M}_{T}}\in\hat{T}$,
then $\exists\left\langle \mathsf{C},\mathsf{D}\right\rangle \in\bigcup\left\llbracket \phi\right\rrbracket ^{\mathcal{M}_{I}+}$
such that $\mathsf{C}\subseteq\hat{T_{0}}$ and $\mathsf{D}\subseteq\hat{F_{0}}$.
After all, $\{\left\llbracket \phi\right\rrbracket ^{\mathcal{M}_{T}}\}\subseteq\hat{T_{0}}$
and $\emptyset\subseteq\hat{F_{0}}$. Similarly, if $\left\llbracket \phi\right\rrbracket ^{\mathcal{M}_{T}}\in\hat{F}$,
then $\exists\left\langle \mathsf{C},\mathsf{D}\right\rangle \in\bigcup\left\llbracket \phi\right\rrbracket ^{\mathcal{M}_{I}-}$
such that $\mathsf{C}\subseteq\hat{T_{0}}$ and $\mathsf{D}\subseteq\hat{F_{0}}$.
After all, $\emptyset\subseteq\hat{T_{0}}$ and $\{\left\llbracket \phi\right\rrbracket ^{\mathcal{M}_{T}}\}\subseteq\hat{F_{0}}$.

\emph{Induction step}: Our state $\left\llbracket \phi\right\rrbracket ^{\mathcal{M}_{T}}$
can be of the form $\sim\left\llbracket \eta\right\rrbracket ^{\mathcal{M}_{T}}$,
$\left\llbracket \eta\right\rrbracket ^{\mathcal{M}_{T}}\varcurlywedge\left\llbracket \theta\right\rrbracket ^{\mathcal{M}_{T}}$,
or $\prod(\mathsf{o})\left\llbracket \eta\right\rrbracket ^{\mathcal{M}_{T}}$.

State-Negation: If $\left\llbracket \phi\right\rrbracket ^{\mathcal{M}_{T}}=\thinspace\sim\left\llbracket \eta\right\rrbracket ^{\mathcal{M}_{T}}$,
there are sentences $\phi=\neg\eta$. Then $\left\llbracket \phi\right\rrbracket ^{\mathcal{M}_{T}}\in\hat{T}$
iff $\left\llbracket \eta\right\rrbracket ^{\mathcal{M}_{T}}\in\hat{F}$.
By our induction hypothesis, $\exists\left\langle \mathsf{C},\mathsf{D}\right\rangle \in\bigcup\left\llbracket \eta\right\rrbracket ^{\mathcal{M}_{I}+}$
such that $\mathsf{C}\subseteq\hat{T_{0}}$ and $\mathsf{D}\subseteq\hat{F_{0}}$
whenever $\left\llbracket \eta\right\rrbracket ^{\mathcal{M}_{T}}\in\hat{T}$;
and $\exists\left\langle \mathsf{C},\mathsf{D}\right\rangle \in\bigcup\left\llbracket \eta\right\rrbracket ^{\mathcal{M}_{I}-}$
such that $\mathsf{C}\subseteq\hat{T_{0}}$ and $\mathsf{D}\subseteq\hat{F_{0}}$
whenever $\left\llbracket \eta\right\rrbracket ^{\mathcal{M}_{T}}\in\hat{F}$.
But $\left\llbracket \eta\right\rrbracket ^{\mathcal{M}_{I}+}=\left\llbracket \phi\right\rrbracket ^{\mathcal{M}_{I}-}$
and $\left\llbracket \eta\right\rrbracket ^{\mathcal{M}_{I}-}=\left\llbracket \phi\right\rrbracket ^{\mathcal{M}_{I}+}$.
So, $\exists\left\langle \mathsf{C},\mathsf{D}\right\rangle \in\bigcup\left\llbracket \phi\right\rrbracket ^{\mathcal{M}_{I}-}$such
that $\mathsf{C}\subseteq\hat{T_{0}}$ and $\mathsf{D}\subseteq\hat{F_{0}}$
whenever $\left\llbracket \phi\right\rrbracket ^{\mathcal{M}_{T}}\in\hat{F}$;
and $\exists\left\langle \mathsf{C},\mathsf{D}\right\rangle \in\bigcup\left\llbracket \phi\right\rrbracket ^{\mathcal{M}_{I}+}$
such that $\mathsf{C}\subseteq\hat{T_{0}}$ and $\mathsf{D}\subseteq\hat{F_{0}}$
whenever $\left\llbracket \phi\right\rrbracket ^{\mathcal{M}_{T}}\in\hat{T}$.

State-Conjunction: If $\left\llbracket \phi\right\rrbracket ^{\mathcal{M}_{T}}=\thinspace\left\llbracket \eta\right\rrbracket ^{\mathcal{M}_{T}}\varcurlywedge\left\llbracket \theta\right\rrbracket ^{\mathcal{M}_{T}}$,
there are sentences $\phi=\eta\wedge\theta$. Then $\left\llbracket \phi\right\rrbracket ^{\mathcal{M}_{T}}\in\hat{T}$
iff $\left\llbracket \eta\right\rrbracket ^{\mathcal{M}_{T}}\in\hat{T}$
and $\left\llbracket \theta\right\rrbracket ^{\mathcal{M}_{T}}\in\hat{T}$,
and $\left\llbracket \phi\right\rrbracket ^{\mathcal{M}_{T}}\in\hat{F}$
otherwise. By our induction hypothesis, $\exists\left\langle \mathsf{C},\mathsf{D}\right\rangle \in\bigcup\left\llbracket \eta\right\rrbracket ^{\mathcal{M}_{I}+}$
such that $\mathsf{C}\subseteq\hat{T_{0}}$ and $\mathsf{D}\subseteq\hat{F_{0}}$
whenever $\left\llbracket \eta\right\rrbracket ^{\mathcal{M}_{T}}\in\hat{T}$;
and (ii) $\exists\left\langle \mathsf{C},\mathsf{D}\right\rangle \in\bigcup\left\llbracket \eta\right\rrbracket ^{\mathcal{M}_{I}-}$
such that $\mathsf{C}\subseteq\hat{T_{0}}$ and $\mathsf{D}\subseteq\hat{F_{0}}$
whenever $\left\llbracket \eta\right\rrbracket ^{\mathcal{M}_{T}}\in\hat{F}$;
and the corresponding claim holds for $\theta$. Now, $\left\llbracket \eta\wedge\theta\right\rrbracket ^{\mathcal{M}_{I}+}=\left\llbracket \eta\right\rrbracket ^{\mathcal{M}_{I}+}\sqcup\left\llbracket \theta\right\rrbracket ^{\mathcal{M}_{I}+}$
and $\left\llbracket \eta\wedge\theta\right\rrbracket ^{\mathcal{M}_{I}-}=$
$\left\llbracket \eta\right\rrbracket ^{\mathcal{M}_{I}-}\sqcap\left\llbracket \theta\right\rrbracket ^{\mathcal{M}_{I}-}\sqcap(\left\llbracket \eta\right\rrbracket ^{\mathcal{M}_{I}-}\sqcup\left\llbracket \theta\right\rrbracket ^{\mathcal{M}_{I}-})$.
Note that, by the definition of symjunction, $\bigcup\left\llbracket \eta\right\rrbracket ^{\mathcal{M}_{I}-}\subseteq\bigcup\left\llbracket \eta\wedge\theta\right\rrbracket ^{\mathcal{M}_{I}-}$.
Case (i): Suppose that $\left\llbracket \phi\right\rrbracket ^{\mathcal{M}_{T}}\in\hat{T}$.
Then $\left\llbracket \eta\right\rrbracket ^{\mathcal{M}_{T}}\in\hat{T}$
and $\left\llbracket \theta\right\rrbracket ^{\mathcal{M}_{T}}\in\hat{T}$.
Hence, there is a pair $\left\langle \mathsf{C},\mathsf{D}\right\rangle \in\bigcup\left\llbracket \eta\right\rrbracket ^{\mathcal{M}_{I}+}$
such that $\mathsf{C}\subseteq\hat{T_{0}}$ and $\mathsf{D}\subseteq\hat{F_{0}}$
and there is a pair $\left\langle \mathsf{E},\mathsf{F}\right\rangle \in\bigcup\left\llbracket \theta\right\rrbracket ^{\mathcal{M}_{I}+}$
such that $\mathsf{E}\subseteq\hat{T_{0}}$ and $\mathsf{F}\subseteq\hat{F_{0}}$.
It follows that there is a pair $\left\langle \mathsf{C}\cup\mathsf{E},\mathsf{D}\cup\mathsf{F}\right\rangle \in\bigcup(\left\llbracket \eta\right\rrbracket ^{\mathcal{M}_{I}+}\sqcup\left\llbracket \theta\right\rrbracket ^{\mathcal{M}_{I}+})=\bigcup\left\llbracket \eta\wedge\theta\right\rrbracket ^{\mathcal{M}_{I}+}$
such $\mathsf{C}\cup\mathsf{E}\subseteq\hat{T_{0}}$ and $\mathsf{D}\cup\mathsf{F}\subseteq\hat{F_{0}}$.
Case (ii): Suppose $\left\llbracket \phi\right\rrbracket ^{\mathcal{M}_{T}}\in\hat{F}$.
Then either $\left\llbracket \eta\right\rrbracket ^{\mathcal{M}_{T}}\in\hat{F}$
or $\left\llbracket \theta\right\rrbracket ^{\mathcal{M}_{T}}\in\hat{F}$.
Suppose without loss of generality that $\left\llbracket \eta\right\rrbracket ^{\mathcal{M}_{T}}\in\hat{F}$.
By our induction hypothesis, $\exists\left\langle \mathsf{C},\mathsf{D}\right\rangle \in\bigcup\left\llbracket \eta\right\rrbracket ^{\mathcal{M}_{I}-}$
such that $\mathsf{C}\subseteq\hat{T_{0}}$ and $\mathsf{D}\subseteq\hat{F_{0}}$.
Since $\bigcup\left\llbracket \eta\right\rrbracket ^{\mathcal{M}_{I}-}\subseteq\bigcup\left\llbracket \eta\wedge\theta\right\rrbracket ^{\mathcal{M}_{I}-}$,
it follows that $\exists\left\langle \mathsf{C},\mathsf{D}\right\rangle \in\bigcup\left\llbracket \eta\wedge\theta\right\rrbracket ^{\mathcal{M}_{I}-}$
such that $\mathsf{C}\subseteq\hat{T_{0}}$ and $\mathsf{D}\subseteq\hat{F_{0}}$.

State-Universal-Generalization: If $\left\llbracket \phi\right\rrbracket ^{\mathcal{M}_{T}}=\thinspace\prod(\mathsf{o})\left\llbracket \eta\right\rrbracket ^{\mathcal{M}_{T}}$,
there are sentences $\phi$ and $\eta$ such that $\phi=\forall x_{i}[x_{i}/a]\eta$.
Then $\left\llbracket \phi\right\rrbracket ^{\mathcal{M}_{T}}\in\hat{T}$
iff $\forall\mathsf{o}\in\mathbb{O}([\mathsf{o}/\left\llbracket a\right\rrbracket ^{\mathcal{M}_{T}}]\left\llbracket \eta\right\rrbracket ^{\mathcal{M}_{T}}\in\hat{T})$;
and $\left\llbracket \phi\right\rrbracket ^{\mathcal{M}_{T}}\in\hat{F}$
iff $\exists\mathsf{o}\in\mathbb{O}([\mathsf{o}/\left\llbracket a\right\rrbracket ^{\mathcal{M}_{T}}]\left\llbracket \eta\right\rrbracket ^{\mathcal{M}_{T}}\in\hat{F})$.
By our induction hypothesis, for all $[\mathsf{o}/\left\llbracket a\right\rrbracket ^{\mathcal{M}_{T}}]\left\llbracket \eta\right\rrbracket ^{\mathcal{M}_{T}}$,
we have (i) $\exists\left\langle \mathsf{C},\mathsf{D}\right\rangle \in\bigcup[\mathsf{o}/\left\llbracket a\right\rrbracket ^{\mathcal{M}_{T}}]\left\llbracket \eta\right\rrbracket ^{\mathcal{M}_{T}+}$
such that $\mathsf{C}\subseteq\hat{T_{0}}$ and $\mathsf{D}\subseteq\hat{F_{0}}$
whenever $[\mathsf{o}/\left\llbracket a\right\rrbracket ^{\mathcal{M}_{T}}]\left\llbracket \eta\right\rrbracket ^{\mathcal{M}_{T}}\in\hat{T}$;
and (ii) $\exists\left\langle \mathsf{C},\mathsf{D}\right\rangle \in\bigcup[\mathsf{o}/\left\llbracket a\right\rrbracket ^{\mathcal{M}_{T}}]\left\llbracket \eta\right\rrbracket ^{\mathcal{M}_{T}-}$
such that $\mathsf{C}\subseteq\hat{T_{0}}$ and $\mathsf{D}\subseteq\hat{F_{0}}$
whenever $[\mathsf{o}/\left\llbracket a\right\rrbracket ^{\mathcal{M}_{T}}]\left\llbracket \eta\right\rrbracket ^{\mathcal{M}_{T}}\in\hat{F}$.
Moreover, $\left\llbracket \forall x_{i}[x_{i}/a]\eta\right\rrbracket ^{\mathcal{M}_{I}+}$
is $\bigsqcup\{[\mathsf{o}/\left\llbracket a\right\rrbracket ^{\mathcal{M}_{T}}]\left\llbracket \eta\right\rrbracket ^{\mathcal{M}_{I}+}\mid\mathsf{o}\in\mathbb{O}\}$;
and $\left\llbracket \forall x_{i}[x_{i}/a]\eta\right\rrbracket ^{\mathcal{M}_{I}-}$
is $\APLdown\{[\mathsf{o}/\left\llbracket a\right\rrbracket ^{\mathcal{M}_{T}}]\left\llbracket \eta\right\rrbracket ^{\mathcal{M}_{I}-}\mid\mathsf{o}\in\mathbb{O}\}$.

Case (i): Suppose that $\left\llbracket \phi\right\rrbracket ^{\mathcal{M}_{T}}\in\hat{T}$.
Then $\forall\mathsf{o}\in\mathbb{O}([\mathsf{o}/\left\llbracket a\right\rrbracket ^{\mathcal{M}_{T}}]\left\llbracket \eta\right\rrbracket ^{\mathcal{M}_{T}}\in\hat{T})$.
So, for each of the corresponding roles, $\exists\left\langle \mathsf{C},\mathsf{D}\right\rangle \in\bigcup[\mathsf{o}/\left\llbracket a\right\rrbracket ^{\mathcal{M}_{T}}]\left\llbracket \eta\right\rrbracket ^{\mathcal{M}_{T}+}$
such that $\mathsf{C}\subseteq\hat{T_{0}}$ and $\mathsf{D}\subseteq\hat{F_{0}}$.
Call this the counterexampleing pair, $\left\langle \mathsf{C}_{\mathsf{o}},\mathsf{D}_{\mathsf{o}}\right\rangle $,
for $[\mathsf{o}/\left\llbracket a\right\rrbracket ^{\mathcal{M}_{T}}]\left\llbracket \eta\right\rrbracket ^{\mathcal{M}_{T}+}$.
Now $\bigcup\bigsqcup\{[\mathsf{o}/\left\llbracket a\right\rrbracket ^{\mathcal{M}_{T}}]\left\llbracket \eta\right\rrbracket ^{\mathcal{M}_{I}+}\mid\mathsf{o}\in\mathbb{O}\}$
contains a pair $\left\langle \mathsf{C}*,\mathsf{D}*\right\rangle =\left\langle \bigcup\{\mathsf{C}_{\mathsf{o}}\mid\mathsf{o}\in\mathbb{O}\},\bigcup\{\mathsf{D}_{\mathsf{o}}\mid\mathsf{o}\in\mathbb{O}\}\right\rangle $,
i.e., a pair such that $\mathsf{C}*$ is the union of all the first
elements of a counterexampling pair for $[\mathsf{o}/\left\llbracket a\right\rrbracket ^{\mathcal{M}_{T}}]\left\llbracket \eta\right\rrbracket ^{\mathcal{M}_{T}+}$
for each object and $\mathsf{D}*$ is the union of all the second
elements of a counterexampling pair for $[\mathsf{o}/\left\llbracket a\right\rrbracket ^{\mathcal{M}_{T}}]\left\llbracket \eta\right\rrbracket ^{\mathcal{M}_{T}+}$
for each object. It follows $\mathsf{C}*\subseteq\hat{T_{0}}$ and
$\mathsf{D}*\subseteq\hat{F_{0}}$. Therefore, $\exists\left\langle \mathsf{C},\mathsf{D}\right\rangle \in\bigcup\left\llbracket \phi\right\rrbracket ^{\mathcal{M}_{I}+}$
such that $\mathsf{C}\subseteq\hat{T_{0}}$ and $\mathsf{D}\subseteq\hat{F_{0}}$.

Case (ii): Suppose $\left\llbracket \phi\right\rrbracket ^{\mathcal{M}_{T}}\in\hat{F}$.
Then $\exists\mathsf{o}\in\mathbb{O}([\mathsf{o}/\left\llbracket a\right\rrbracket ^{\mathcal{M}_{T}}]\left\llbracket \eta\right\rrbracket ^{\mathcal{M}_{T}}\in\hat{F})$.
Suppose without loss of generality that $[\mathsf{o}/\left\llbracket a\right\rrbracket ^{\mathcal{M}_{T}}]\left\llbracket \eta\right\rrbracket ^{\mathcal{M}_{T}}\in\hat{F}$.
By our induction hypothesis, $\exists\left\langle \mathsf{C},\mathsf{D}\right\rangle \in\bigcup[\mathsf{o}/\left\llbracket a\right\rrbracket ^{\mathcal{M}_{T}}]\left\llbracket \eta\right\rrbracket ^{\mathcal{M}_{I}-}$
such that $\mathsf{C}\subseteq\hat{T_{0}}$ and $\mathsf{D}\subseteq\hat{F_{0}}$.
Since $\bigcup[\mathsf{o}/\left\llbracket a\right\rrbracket ^{\mathcal{M}_{T}}]\left\llbracket \eta\right\rrbracket ^{\mathcal{M}_{I}-}\subseteq\bigcup\left\llbracket \forall x_{i}[x_{i}/a]\eta\right\rrbracket ^{\mathcal{M}_{I}-}$,
it follows that $\exists\left\langle \mathsf{C},\mathsf{D}\right\rangle \in\bigcup\left\llbracket \forall x_{i}[x_{i}/a]\eta\right\rrbracket ^{\mathcal{M}_{I}-}$
such that $\mathsf{C}\subseteq\hat{T_{0}}$ and $\mathsf{D}\subseteq\hat{F_{0}}$.
\end{proof}

\begin{prop}
Each Tarskian model, $\mathcal{M}_{T}$, and its Ersatz Tarskian implication-space
model, $\mathcal{M}_{I}$, are counterexamples to the same inferences.
\end{prop}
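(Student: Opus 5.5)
The plan is to prove both directions of the biconditional for an arbitrary inference $\Gamma\sttstile{}{}\Delta$. I read ``$\mathcal{M}_{T}$ is a counterexample'' as: every sentence in $\Gamma$ is true and every sentence in $\Delta$ is false in $\mathcal{M}_{T}$. I read ``$\mathcal{M}_{I}$ is a counterexample'' as $\left\llbracket \Gamma\right\rrbracket ^{\mathcal{M}_{I}}\not\dttstile{}{}\left\llbracket \Delta\right\rrbracket ^{\mathcal{M}_{I}}$. By Lemma~\ref{lem:T-and-F-track-truth-in-T-models}, truth and falsity in $\mathcal{M}_{T}$ can be replaced throughout by membership of $\left\llbracket \cdot\right\rrbracket ^{\mathcal{M}_{T}}$ in $\hat{T}$ and $\hat{F}$, respectively.

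\emph{Left to right.} I start from the empty sequent. The adjunction of no roles is $\mathcal{R}(\{\left\langle \emptyset,\emptyset\right\rangle \})$, and $\left\langle \emptyset,\emptyset\right\rangle \notin\mathbb{I}$ because $\emptyset\subseteq\hat{T_{0}}$ and $\emptyset\subseteq\hat{F_{0}}$. Hence $\emptyset\not\dttstile{}{}\emptyset$ in $\mathcal{M}_{I}$. Now suppose all of $\Gamma$ is in $\hat{T}$ and all of $\Delta$ is in $\hat{F}$. I add the premises one at a time, each time using the first half of Lemma~\ref{lem:T-counterexamples-on-left-and-F-on-right}, and then add the conclusions one at a time using its second half. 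Each step preserves non-entailment, so $\left\llbracket \Gamma\right\rrbracket \not\dttstile{}{}\left\llbracket \Delta\right\rrbracket $. This is immediate for finite $\Gamma,\Delta$, which is the only case the definition of $\dttstile{}{}$ (with its finite index sets) really handles. If infinite sets must be covered, I would rerun the lemma's argument directly: take the union of the counterexampling pairs for all members at once, exactly as in the $\prod$-case of that proof.

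\emph{Right to left.} Suppose some $\gamma\in\Gamma$ is in $\hat{F}$ or some $\delta\in\Delta$ is in $\hat{T}$; I must show that entailment holds in $\mathcal{M}_{I}$. For this I prove a dual of Lemma~\ref{lem:T-counterexamples-on-left-and-F-on-right}. Call a pair $\left\langle \mathsf{C},\mathsf{D}\right\rangle $ \emph{spoiled} if $\mathsf{C}\not\subseteq\hat{T_{0}}$ or $\mathsf{D}\not\subseteq\hat{F_{0}}$; these are exactly the members of $\mathbb{I}$. The dual claim is:
\begin{itemize}
\item if $\left\llbracket \phi\right\rrbracket ^{\mathcal{M}_{T}}\in\hat{F}$, then every pair in $\bigcup\left\llbracket \phi\right\rrbracket ^{\mathcal{M}_{I}+}$ is spoiled;
\item if $\left\llbracket \phi\right\rrbracket ^{\mathcal{M}_{T}}\in\hat{T}$, then every pair in $\bigcup\left\llbracket \phi\right\rrbracket ^{\mathcal{M}_{I}-}$ is spoiled.
\end{itemize}
Given the claim, any pair in the big adjunction contains a component drawn from the offending sentence's role. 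Spoiledness is preserved under pairwise union with anything, because $\mathbb{I}$ is upward closed. So every pair in the adjunction lies in $\mathbb{I}$, and entailment holds.

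The claim is proved by induction on the complexity of states, mirroring the proof of Lemma~\ref{lem:T-counterexamples-on-left-and-F-on-right}.
\begin{itemize}
\item Atoms: the generating pair $\left\langle \{\mathsf{P}\overrightarrow{\mathsf{o}}\},\emptyset\right\rangle $ (or its mirror image) is spoiled by the atomic base partition.
\item Negation: swap the two clauses.
\item A false conjunction on the left: the premisory role is an adjunction, and every pair in it unions in a pair from the false conjunct, so it is spoiled.
\item A true conjunction on the right: the conclusory role is the power-symjunction. Every adjunction over a nonempty subset includes a component from a conjunct, and all conjuncts are true, so every generating pair is spoiled.
\item The $\prod$ case: identical to the conjunction case, with all instances $[\mathsf{o}/\left\llbracket a\right\rrbracket ]$ playing the role of the conjuncts.
\end{itemize}

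The main obstacle is that these arguments reason about \emph{generating} sets $\mathsf{H}$, whereas the claim concerns $\bigcup\mathcal{R}(\mathsf{H})$, which may contain additional pairs. I would handle this with a role-invariant reformulation. Because $\mathbb{I}$ is upward closed, $\mathsf{H}\subseteq\mathbb{I}$ holds iff $\left\langle \emptyset,\emptyset\right\rangle \in\mathsf{RSR}(\mathsf{H})$. The latter is a property of the role, not of the representative, so every member of $\mathcal{R}(\mathsf{H})$, and hence their union, is also contained in $\mathbb{I}$. The same observation lets me pass freely between representatives at each induction step, and it also justifies the use of canonical representatives in the left-to-right direction.
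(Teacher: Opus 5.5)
Your left-to-right half is the paper's whole proof. The paper argues ``by induction on the number of sentences'' from Lemmas~\ref{lem:T-and-F-track-truth-in-T-models} and~\ref{lem:T-counterexamples-on-left-and-F-on-right}, which is exactly your adding of true premises and false conclusions one at a time. You also make explicit the base case $\emptyset\not\dttstile{}{}\emptyset$, which the paper leaves implicit.

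The genuine difference is the converse. Lemma~\ref{lem:T-counterexamples-on-left-and-F-on-right} only shows that true premises and false conclusions never destroy a counterexample. It says nothing about a false premise or a true conclusion, so it cannot show that a counterexample in $\mathcal{M}_{I}$ forces all of $\Gamma$ to be true and all of $\Delta$ false. The paper simply asserts this direction, although it is the direction Theorem~\ref{thm:FOL-maps-to-Ersatz-models} needs for supraclassicality.

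Your dual lemma supplies exactly what is missing. It is proved by the same induction on states, together with upward closure of $\mathbb{I}$. Your passage from generating sets to $\bigcup\mathcal{R}(\mathsf{H})$ via $\langle\emptyset,\emptyset\rangle\in\mathsf{RSR}(\mathsf{H})$ is also sound. Two small corrections to how you justify it:
\begin{itemize}
\item That equivalence holds by the definition of $\mathsf{RSR}$ for any $\mathbb{I}$. Upward closure is needed only to keep spoiled pairs spoiled under unions.
\item The negation and $\prod$ steps of your dual induction use the converses of the ``if'' clauses defining $\langle\hat{T},\hat{F}\rangle$. These are available because $\hat{T}$ and $\hat{F}$ partition the states.
\end{itemize}

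In short, your route costs one extra lemma but proves both directions. The paper's argument as written covers only the direction from Tarskian counterexamples to Ersatz counterexamples.
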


\begin{proof}
$\mathcal{M}_{T}$ is a counterexample to $\Gamma\vdash\Delta$ if
every sentence in $\Gamma$ is true in $\mathcal{M}_{T}$ and every
sentence in $\Delta$ is not true in $\mathcal{M}_{T}$. By induction
on the number of sentences in $\Gamma\vdash\Delta$, it follows from
Lemmas~\ref{lem:T-and-F-track-truth-in-T-models} and \ref{lem:T-counterexamples-on-left-and-F-on-right}
that both, $\mathcal{M}_{T}$ and $\mathcal{M}_{I}$, counterexample
an inference $\Gamma\vdash\Delta$ iff $\left\llbracket \Gamma\right\rrbracket ^{\mathcal{M}_{T}}\in\hat{T}$
and $\left\llbracket \Delta\right\rrbracket ^{\mathcal{M}_{T}}\in\hat{F}$.
\end{proof}

This means that mapping each standard Tarskian model to its Ersatz
Tarskian model preserves the set of inferences that are counterexampled
by the set of models. So, if we take the Ersatz Tarskian models of
all the standard Tarskian models, then we arrive at a set of implication-space
models that include a counterexample to all and only the inferences
that are invalid in classical first-order logic.

\begin{thm}
An inference is valid in classical first-order logic iff the implication
holds in every Ersatz Tarskian implication-space model.\label{thm:FOL-maps-to-Ersatz-models}
\end{thm}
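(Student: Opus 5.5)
The plan is to derive Theorem~\ref{thm:FOL-maps-to-Ersatz-models} directly from the immediately preceding Proposition, which says that each Tarskian model $\mathcal{M}_{T}$ and its Ersatz Tarskian model $\mathcal{M}_{I}$ are counterexamples to the same inferences. Let $m_{E}$ be the set of all Ersatz Tarskian implication-space models, that is, the image of the map $\mathcal{M}_{T}\mapsto\mathcal{M}_{I}$ over all Tarskian models for $\mathcal{L}$. Then ``the implication holds in every Ersatz Tarskian model'' just means $\Gamma\sttstile{}{m_{E}}\Delta$. Both directions will be proved by contraposition, translating counterexamples back and forth along this map.

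For the left-to-right direction, suppose $\Gamma\sttstile{}{m_{E}}\Delta$ fails. Then some $\mathcal{M}\in m_{E}$ has $\left\llbracket \Gamma\right\rrbracket ^{\mathcal{M}}\not\dttstile{}{}\left\llbracket \Delta\right\rrbracket ^{\mathcal{M}}$. By definition of $m_{E}$, $\mathcal{M}=\mathcal{M}_{I}$ for some Tarskian $\mathcal{M}_{T}$. By the Proposition, $\mathcal{M}_{T}$ is then a counterexample to $\Gamma\vdash\Delta$: every $\gamma\in\Gamma$ is true in it and no $\delta\in\Delta$ is. Hence $\Gamma\not\models_{FOL}\Delta$.

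For the right-to-left direction, suppose $\Gamma\not\models_{FOL}\Delta$. Pick a Tarskian $\mathcal{M}_{T}$ making all of $\Gamma$ true and all of $\Delta$ untrue. By the Proposition, its Ersatz model $\mathcal{M}_{I}\in m_{E}$ satisfies $\left\llbracket \Gamma\right\rrbracket ^{\mathcal{M}_{I}}\not\dttstile{}{}\left\llbracket \Delta\right\rrbracket ^{\mathcal{M}_{I}}$, so $\Gamma\sttstile{}{m_{E}}\Delta$ fails.

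The real work sits in the Proposition and the lemmas behind it. The step needing the most care is checking that the Proposition really covers arbitrary (possibly infinite) $\Gamma,\Delta$, since it was argued ``by induction on the number of sentences''. I would handle this in one of two ways. The first option is to restrict attention to finite sequents, which is how the content-entailment definition indexes $\mathtt{G},\mathtt{D}$, and then appeal to compactness of classical logic to extend to arbitrary sets. The second option is to argue directly: when every premise is in $\hat{T}$ and every conclusion is in $\hat{F}$, choose a witnessing pair $\left\langle \mathsf{C},\mathsf{D}\right\rangle $ with $\mathsf{C}\subseteq\hat{T_{0}}$ and $\mathsf{D}\subseteq\hat{F_{0}}$ for each sentence, as in Lemma~\ref{lem:T-counterexamples-on-left-and-F-on-right}, and take unions, exactly as in the universal-quantifier case there. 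Either route gives a pair in the canonical representative outside $\mathbb{I}$, which finishes the argument.
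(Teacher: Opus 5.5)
Your argument is the paper's: the paper proves Theorem~\ref{thm:FOL-maps-to-Ersatz-models} in one line by reading it off the preceding Proposition, and you do the same, only spelling out both directions by transporting counterexamples along $\mathcal{M}_{T}\mapsto\mathcal{M}_{I}$. The worry about infinite $\Gamma,\Delta$ does not arise, because content entailment is stated only for finite $\mathtt{G},\mathtt{D}$. If you did extend it to infinite sets, compactness of classical logic alone would only give the ``valid $\Rightarrow$ holds in every Ersatz model'' half, and the converse half would need your direct union-of-witnesses argument.
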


\begin{proof}
The set of Ersatz Tarskian implication-space models are all and only
models such that there is a Tarskian model that counterexamples exactly
the same inferences.
\end{proof}

We can thus recover classical first-order logic in implication-space
semantics by letting the models over which we define implication be
the Ersatz Tarskian models. One may wonder, however, whether we can
characterize this set of models without appealing to Tarskian model
theory.

As it turns out, the Ersatz Tarskian models are counterexamples to
all and only the inferences that are also counterexampled by the models
that obey Gentzen's structural rules in the implication relation of
their frames, which we may call the structural models. We can define
these models thus:

\begin{defn}[Structural models]
 An implication-space model is structural iff
\begin{itemize}
\item (CO, containment / reflexivity {[}with contexts{]}) for all $\mathsf{G},\mathsf{D}\subseteq\mathbb{B}$,
if $\mathsf{G}\cap\mathsf{D}\neq\emptyset$, then $\left\langle \mathsf{G},\mathsf{D}\right\rangle \in\mathbb{I}$
and\vspace{-6pt}
\item (MO, monotonicity) for all $\mathsf{G},\mathsf{D},\mathsf{E},\mathsf{F}\subseteq\mathbb{B}$,
if $\left\langle \mathsf{G},\mathsf{D}\right\rangle \in\mathbb{I}$,
then $\left\langle \mathsf{G}\cup\mathsf{E},\mathsf{D}\cup\mathsf{F}\right\rangle \in\mathbb{I}$
and\vspace{-6pt}
\item (CUT, transitivity) for all $\mathsf{G},\mathsf{D}\subseteq\mathbb{B}$
and $\mathsf{a}\in\mathbb{B}$, if $\left\langle \mathsf{G},\mathsf{D}\right\rangle \not\in\mathbb{I}$,
then $\left\langle \mathsf{G}\cup\{\mathsf{a}\},\mathsf{D}\right\rangle \not\in\mathbb{I}$
or $\left\langle \mathsf{G},\mathsf{D\cup\{\mathsf{a}\}}\right\rangle \not\in\mathbb{I}$.
\end{itemize}
\end{defn}

I will now show that if $m$ is the set of all structural implication
space models, then $\Gamma\sttstile{}{m}\Delta$ iff $\Gamma\models_{FOL}\Delta$.
It suffices to show that (a) each structural model is equivalent to
a set of Ersatz Tarskian models, in its power to provide counterexamples,
and (b) all Ersatz Tarskian models are structural models. The second
part is easy to show.

\begin{lem}
All Ersatz Tarskian implication-space models are structural models.\label{lem:Ersatz-models-are-structural}
\end{lem}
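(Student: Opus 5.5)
We verify the three structural conditions directly for the implication relation of an Ersatz Tarskian model, $\mathbb{I}=\{\left\langle x,y\right\rangle \mid x\not\subseteq\hat{T_{0}}\lor y\not\subseteq\hat{F_{0}}\}$. The key fact is that $\hat{T_{0}}$ and $\hat{F_{0}}$ partition $\mathbb{B}$: every bearer $\left\langle \mathsf{P},\overrightarrow{\mathsf{o}}\right\rangle$ is an atomic state, and by the atomic base partition it lies in $\hat{T_{0}}$ if $\overrightarrow{\mathsf{o}}\in\mathsf{P}$ and in $\hat{F_{0}}$ otherwise, never in both. Each structural condition then reduces to an elementary set-theoretic remark about this partition.

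For (CO), suppose $\mathsf{a}\in\mathsf{G}\cap\mathsf{D}$. If $\mathsf{a}\in\hat{T_{0}}$, then $\mathsf{a}\notin\hat{F_{0}}$, so $\mathsf{D}\not\subseteq\hat{F_{0}}$. If instead $\mathsf{a}\in\hat{F_{0}}$, then $\mathsf{a}\notin\hat{T_{0}}$, so $\mathsf{G}\not\subseteq\hat{T_{0}}$. Either way $\left\langle \mathsf{G},\mathsf{D}\right\rangle \in\mathbb{I}$.

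For (MO), note that failure of inclusion is upward persistent. If $\mathsf{G}\not\subseteq\hat{T_{0}}$, then $\mathsf{G}\cup\mathsf{E}\not\subseteq\hat{T_{0}}$, and likewise $\mathsf{D}\not\subseteq\hat{F_{0}}$ gives $\mathsf{D}\cup\mathsf{F}\not\subseteq\hat{F_{0}}$. This is the monotonicity already used in the proof of Lemma~\ref{lem:T-counterexamples-on-left-and-F-on-right}.

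For (CUT), suppose $\left\langle \mathsf{G},\mathsf{D}\right\rangle \notin\mathbb{I}$, so that $\mathsf{G}\subseteq\hat{T_{0}}$ and $\mathsf{D}\subseteq\hat{F_{0}}$. Take any $\mathsf{a}\in\mathbb{B}$. By the partition, either $\mathsf{a}\in\hat{T_{0}}$ or $\mathsf{a}\in\hat{F_{0}}$. In the first case $\mathsf{G}\cup\{\mathsf{a}\}\subseteq\hat{T_{0}}$ and $\mathsf{D}\subseteq\hat{F_{0}}$, so $\left\langle \mathsf{G}\cup\{\mathsf{a}\},\mathsf{D}\right\rangle \notin\mathbb{I}$. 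In the second case $\left\langle \mathsf{G},\mathsf{D}\cup\{\mathsf{a}\}\right\rangle \notin\mathbb{I}$ by the symmetric argument.

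The only step needing care is the exhaustiveness half of the partition, which (CUT) relies on. It requires that the bearers of the Ersatz frame be exactly the atomic states of $\mathcal{M}_{T}$. Clause (i) of the definition of Ersatz Tarskian models guarantees this, since the two models share the same objects and properties. The disjointness half, used in (CO), is immediate from the definition of the atomic base partition.
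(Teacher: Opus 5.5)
Your proof is correct and follows essentially the same route as the paper's: a direct check of (CO), (MO), and (CUT) using only the fact that the bearers, which are exactly the atomic states, are partitioned into true and false ones. The only difference is that you work with $\hat{T_{0}}$ and $\hat{F_{0}}$, as in the definition of Ersatz Tarskian models, whereas the paper's proof writes $\hat{T}$ and $\hat{F}$; these coincide with $\hat{T_{0}}$ and $\hat{F_{0}}$ on bearers.
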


\begin{proof}
By definition, Ersatz Tarskian models are such that $\mathbb{I}=\{\left\langle x,y\right\rangle \mid x\not\subseteq\hat{T}\thinspace\lor\thinspace y\not\subseteq\hat{F}\}$.
Equivalently, $\left\langle x,y\right\rangle \not\in\mathbb{I}$ iff
$x\subseteq\hat{T}$ and $y\subseteq\hat{F}$. In every Tarskian model
$\hat{T}$ and $\hat{F}$ partition the states of affairs.

CO: If $\mathsf{G}\cap\mathsf{D}\neq\emptyset$, then it cannot be
that $\mathsf{G}\subseteq\hat{T}$ and $\mathsf{D}\subseteq\hat{F}$
because $\hat{T}$ and $\hat{F}$ are disjoint. So $\left\langle \mathsf{G},\mathsf{D}\right\rangle \in\mathbb{I}$.

MO: If $\left\langle \mathsf{G},\mathsf{D}\right\rangle \in\mathbb{I}$,
then $\mathsf{G}\not\subseteq\hat{T}$ or $\mathsf{D}\not\subseteq\hat{F}$.
But then $\mathsf{G}\cup\mathsf{E}\not\subseteq\hat{T}$ or $\mathsf{D}\cup\mathsf{F}\not\subseteq\hat{F}$.
So $\left\langle \mathsf{G}\cup\mathsf{E},\mathsf{D}\cup\mathsf{F}\right\rangle \in\mathbb{I}$.

CUT: If $\left\langle \mathsf{G},\mathsf{D}\right\rangle \not\in\mathbb{I}$,
then $\mathsf{G}\subseteq\hat{T}$ and $\mathsf{D}\subseteq\hat{F}$.
Since $\hat{T}$ and $\hat{F}$ partition the states of affairs and
$\mathsf{a}\in\mathbb{B}$ (i.e., $\mathsf{a}$ is a state of affairs),
either $\mathsf{a}\in\hat{T}$ or $\mathsf{a}\in\hat{F}$. Therefore,
either $\mathsf{G}\cup\{\mathsf{a}\}\subseteq\hat{T}$ or $\mathsf{D}\cup\{\mathsf{a}\}\subseteq\hat{F}$.
Hence, $\left\langle \mathsf{G}\cup\{\mathsf{a}\},\mathsf{D}\right\rangle \not\in\mathbb{I}$
or $\left\langle \mathsf{G},\mathsf{D\cup\{\mathsf{a}\}}\right\rangle \not\in\mathbb{I}$.
\end{proof}

Establishing the connection in the other direction requires more work
because some structural models correspond not to one but to a whole
set of Ersatz Tarskian models. For readers at home in proof theory,
to understand the idea behind the following proof, it may be helpful
to realize that when we construct a Tarskian countermodel to an invalid
inference based on a root-first proof search in a sequent calculus,
we find an invalid leaf, containing only atomic sentences, of the
``would-be'' proof-tree and we ensure that all the atomic sentences
on the left are true in the model we are constructing, while all the
atomic sentences on the right are false. Cut then ensures that we
can extend this truth-value assignment to cover all atomic sentences.
In this way, a Tarskian model corresponds to a leafs of a ``would-be''
proof-tree of an underivable sequent. In implication-space semantics,
we do not have to pick one of these leafs; rather, we can construct
a model that is a counterexample to all such leafs simultaneously.
The tree of partitions in the construction below, in effect, mirrors
the structure of the branches of a ``would-be'' proof-tree with
invalid leafs.

\begin{lem}
For every structural implication-space model, there is a set of Ersatz
Tarskian models such that the implication-space model is a counterexample
to all and only the inferences to which some of the Ersatz Tarskian
model in that set is a counterexample.\label{lem:Structural-models-are-sets-of-ersatz-models}
\end{lem}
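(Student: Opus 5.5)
Given a structural model $\mathcal{M}=\langle\langle\mathbb{B},\mathbb{I}\rangle,\llbracket\cdot\rrbracket\rangle$, I would take as the set of Ersatz Tarskian models exactly those induced by the \emph{maximal counterexamples} of $\mathbb{I}$. Call a partition $\langle T,F\rangle$ of $\mathbb{B}$ (with $T\cup F=\mathbb{B}$, $T\cap F=\emptyset$) \emph{admissible} if $\langle\mathsf{G},\mathsf{D}\rangle\notin\mathbb{I}$ for all $\mathsf{G}\subseteq T$, $\mathsf{D}\subseteq F$. Each admissible partition defines a Tarskian model $\mathcal{M}_T^{\langle T,F\rangle}$ on the same objects: interpret each $n$-ary property $\mathsf{P}$ as the set of tuples $\overrightarrow{\mathsf{o}}$ with $\mathsf{P}\overrightarrow{\mathsf{o}}\in T$, and interpret names and predicates as in $\mathcal{M}$. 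Then $\hat{T_0}=T$ and $\hat{F_0}=F$. The candidate set is $\{\mathcal{M}_I^{\langle T,F\rangle}\}$, the Ersatz models of these Tarskian models.

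The first key step is a representation of $\mathbb{I}$: $\langle\mathsf{G},\mathsf{D}\rangle\notin\mathbb{I}$ iff some admissible $\langle T,F\rangle$ has $\mathsf{G}\subseteq T$ and $\mathsf{D}\subseteq F$. The right-to-left direction is immediate from admissibility. For left-to-right, I extend $\langle\mathsf{G},\mathsf{D}\rangle$ one bearer at a time using CUT: well-order $\mathbb{B}$ and at each stage put $\mathsf{a}$ on whichever side keeps the pair outside $\mathbb{I}$. CO guarantees $T\cap F=\emptyset$, since a bearer on both sides would force membership in $\mathbb{I}$.

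This step is the main obstacle. At limit stages and for the final partition, we need every \emph{subpair} of the union to lie outside $\mathbb{I}$. MO gives downward closure of non-membership, so each stage is fine, but an infinite union may still fall into $\mathbb{I}$ unless $\mathbb{I}$ is compact. I would first try to handle this by defining admissibility only for finite subsets $\mathsf{G}\subseteq T$, $\mathsf{D}\subseteq F$, and using Zorn's lemma on pairs all of whose finite subpairs are outside $\mathbb{I}$; CUT then applies to show maximal pairs are total. If infinite sets matter, I would instead work directly with the sets $T,F$ themselves and use Zorn on pairs $\langle\mathsf{G},\mathsf{D}\rangle\notin\mathbb{I}$, ordered by inclusion. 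Here chains might fail to have upper bounds, so I would have to argue, or flag as a needed assumption, that CUT applied transfinitely suffices.

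Given the representation, I show that $\mathbb{I}=\bigcap_{\langle T,F\rangle}\mathbb{I}^{\langle T,F\rangle}$, where $\mathbb{I}^{\langle T,F\rangle}=\{\langle x,y\rangle\mid x\not\subseteq T\lor y\not\subseteq F\}$ is the implication relation of the corresponding Ersatz model. Next I lift this from bearers to sentences. The key claim, proved by induction on sentences, is that $\bigcup\llbracket\phi\rrbracket^{\mathcal{M}\pm}$ contains a pair lying inside some $\langle T,F\rangle$ exactly when the corresponding sentence role in $\mathcal{M}_I^{\langle T,F\rangle}$ does. Because roles are abstracted from different $\mathbb{I}$'s, the cleaner route is to compare counterexampling pairs directly. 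Using the canonical-union representative, $\llbracket\Gamma\rrbracket\not\dttstile{}{}\llbracket\Delta\rrbracket$ in $\mathcal{M}$ iff the adjunction contains a pair $\langle\mathsf{A},\mathsf{B}\rangle\notin\mathbb{I}$. By the representation, that holds iff for some admissible partition, $\mathsf{A}\subseteq T$ and $\mathsf{B}\subseteq F$. Running the induction of Lemma~\ref{lem:T-counterexamples-on-left-and-F-on-right} in both directions (negation swaps, adjunction unions, symjunction and power-symjunction give existential choice, and quantifiers are handled objectwise), this holds iff $\mathcal{M}_T^{\langle T,F\rangle}$ makes all of $\Gamma$ true and all of $\Delta$ false. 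The preceding Proposition then says this is exactly when $\mathcal{M}_I^{\langle T,F\rangle}$ counterexamples $\Gamma\vdash\Delta$, which yields the lemma.
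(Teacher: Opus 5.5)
\paragraph{The limit step is a genuine gap, and it cannot be closed.} The lemma as stated is false for structural models whose $\mathbb{I}$ is not compact, so neither of your remedies can work in general.

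Take $\mathbb{O}=\mathbb{N}$, interpret $F$ by a unary property $\mathsf{F}$, and let $\langle\mathsf{G},\mathsf{D}\rangle\in\mathbb{I}$ iff $\mathsf{G}\cap\mathsf{D}\neq\emptyset$ or $\mathsf{G}\cup\mathsf{D}$ is infinite.
\begin{itemize}
\item The model is structural. CO and MO are immediate. CUT holds because adding one bearer to the appropriate side of a finite disjoint pair keeps it finite and disjoint.
\item The model counterexamples $\vdash Fa$, since $\langle\emptyset,\{\mathsf{F}\llbracket a\rrbracket\}\rangle\notin\mathbb{I}$.
\item The model does not counterexample $\forall x(Fx\lor\neg Fx)\vdash Fa$. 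Every pair that the $\forall$-clause generates for the premisory role of $\forall x(Fx\lor\neg Fx)$ is a union, over all $\mathsf{o}\in\mathbb{N}$, of one of $\langle\{\mathsf{Fo}\},\emptyset\rangle$, $\langle\emptyset,\{\mathsf{Fo}\}\rangle$, $\langle\{\mathsf{Fo}\},\{\mathsf{Fo}\}\rangle$. Each such pair is therefore infinite and lies in $\mathbb{I}$.
\item Any Ersatz model that counterexamples $\vdash Fa$ makes $Fa$ false and $\forall x(Fx\lor\neg Fx)$ true. So it counterexamples $\forall x(Fx\lor\neg Fx)\vdash Fa$.
\end{itemize}
Hence no set of Ersatz models matches this structural model. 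Indeed, it has no admissible partition in your sense at all. So ``CUT applied transfinitely'' cannot suffice. The paper's own stage-by-stage construction has the same hole: it asserts that each final $\langle\hat{T}_k,\hat{F}_k\rangle$ lies outside $\mathbb{I}$, which fails at the limit in exactly this example.

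\paragraph{What your first remedy does establish.} Zorn on pairs all of whose finite subpairs lie outside $\mathbb{I}$ is sound.
\begin{itemize}
\item Chains are bounded by their unions.
\item A maximal pair $\langle T,F\rangle$ is total. Otherwise some bearer $\mathsf{a}$ would give finite $\langle\mathsf{G}_1\cup\{\mathsf{a}\},\mathsf{D}_1\rangle\in\mathbb{I}$ and $\langle\mathsf{G}_2,\mathsf{D}_2\cup\{\mathsf{a}\}\rangle\in\mathbb{I}$ with $\mathsf{G}_i\subseteq T$ and $\mathsf{D}_i\subseteq F$. Then MO and CUT would put the finite subpair $\langle\mathsf{G}_1\cup\mathsf{G}_2,\mathsf{D}_1\cup\mathsf{D}_2\rangle$ in $\mathbb{I}$, a contradiction.
\item CO gives disjointness of $T$ and $F$.
\item Every pair outside $\mathbb{I}$, even an infinite one, extends to such a partition.
\end{itemize}
That yields one half of the lemma: a counterexample in the structural model gives a counterexample in some Ersatz model. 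This half is all that Theorem~\ref{thm:FOL-is-structural-imp-space} uses, so that theorem survives.

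The converse half needs the pairs that actually occur in sentence roles to lie outside $\mathbb{I}$. The $\forall$-clause over an infinite domain produces infinite such pairs, which is exactly where it breaks. It goes through if you add a compactness hypothesis: whenever $\langle\mathsf{G},\mathsf{D}\rangle\in\mathbb{I}$, some finite subpair is in $\mathbb{I}$. Restricting to finite $\mathbb{O}$ also suffices. Under either assumption your argument, including the lifting to sentences, proves the lemma.
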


\begin{proof}
We must show that, for every structural implication-space model, $\mathcal{M}$,
there is a set of partitions of $\mathbb{B}$ into two sets, $\hat{T}_{k}$
and $\hat{F}_{k}$, such that $\left\langle x,y\right\rangle \not\in\mathbb{I}$
iff $x\subseteq\hat{T_{k}}$ and $y\subseteq\hat{F_{k}}$ for some
Ersatz Tarskian implication-space model $\mathcal{M}_{k}$. (If we
want to include a trivial model in which $\left\langle \emptyset,\emptyset\right\rangle \in\mathbb{I}^{\mathcal{M}}$,
then by MO every candidate implication is good. So we can put all
bearers into$\hat{T}$ (or all into $\hat{F}$) and that will give
us the desired partition.)

Construction of partitions: We order our bearers (thus using the axiom
of choice) and let the collection of all such orderings be $\mathfrak{U}$.
For each such ordering, $\mathfrak{U}_{i}$, we construct a set of
partitions, $\mathfrak{P}_{i}$, of $\mathbb{B}$ of the desired kind
and these partitions form a tree in the following way. Since we know
that $\left\langle \emptyset,\emptyset\right\rangle \not\in\mathbb{I}^{\mathcal{M}}$,
this is our starting point $\mathfrak{P}_{i}=\{\left\langle \emptyset,\emptyset\right\rangle \}$
at stage 0. We run through all of the bearers (always taking the least
element of $\mathfrak{U}_{i}$ minus the bearers that we have already
added to our partitions). At stage $n$, for all $\left\langle \hat{T_{j}},\hat{F_{j}}\right\rangle \in\mathfrak{P}_{i}$
we have $\left\langle \hat{T_{j}},\hat{F_{j}}\right\rangle \not\in\mathbb{\mathbb{I}^{\mathcal{M}}}$.
For the next bearer $\mathsf{b}_{n+1}$, in $\mathfrak{U}_{i}$, we
know from CUT that, for all $\left\langle \hat{T_{j}},\hat{F_{j}}\right\rangle \in\mathfrak{P}_{i}$,
either $\left\langle \{\mathsf{b}_{n+1}\}\cup\hat{T_{j}},\hat{F_{j}}\right\rangle \not\in\mathbb{I}^{\mathcal{M}}$
or $\left\langle \hat{T_{j}},\{\mathsf{b}_{n+1}\}\cup\hat{F_{j}}\right\rangle \not\in\mathbb{I}^{\mathcal{M}}$
or both. In the first case, $\left\langle \hat{T_{j}},\hat{F_{j}}\right\rangle $
at stage $n+1$ is $\left\langle \{\mathsf{b}_{n+1}\}\cup\hat{T_{j}},\hat{F_{j}}\right\rangle $;
in the second case, it is $\left\langle \hat{T_{j}},\{\mathsf{b}_{n+1}\}\cup\hat{F_{j}}\right\rangle $.
In the third case, $\left\langle \hat{T_{j}},\hat{F_{j}}\right\rangle $
splits into two pairs and $\mathfrak{P}_{i}$ at stage $n+1$ contains
$\left\langle \{\mathsf{b}_{n+1}\}\cup\hat{T_{j}},\hat{F_{j}}\right\rangle $
and also $\left\langle \hat{T_{j}},\{\mathsf{b}_{n+1}\}\cup\hat{F_{j}}\right\rangle $.
The result of this construction after running through all the bearers
in the ordering $\mathfrak{U}_{i}$ is the set of partitions $\mathfrak{P}_{i}$.
The union of all these sets of partitions for all orderings $\mathfrak{U}$
is $\mathfrak{P}$.

From partitions to Ersatz Tarskian models: For all $\left\langle \hat{T_{k}},\hat{F_{k}}\right\rangle \in\mathfrak{P}$,
since we added every bearer either to $\hat{T_{k}}$ or to $\hat{F_{k}}$
at some point, we know that our sets jointly exhaust $\mathbb{B}$.
And since our construction ensures that $\left\langle \hat{T_{k}},\hat{F_{k}}\right\rangle \not\in\mathbb{I}^{\mathcal{M}}$,
CO implies that $\hat{T_{k}}\cap\hat{F_{k}}=\emptyset$. So, we know
that every $\left\langle \hat{T_{k}},\hat{F_{k}}\right\rangle \in\mathfrak{P}$
is a partition of $\mathbb{B}$. The model that is like $\mathcal{M}$
except that $\mathbb{I}^{\mathcal{M}_{k}}=\{\left\langle x,y\right\rangle \mid x\not\subseteq\hat{T_{k}}\thinspace\lor\thinspace y\not\subseteq\hat{F_{k}}\}$
is an Ersatz Tarskian model such that $\hat{T_{k}}$ are the atomic
states that are true in the model and $\hat{F_{k}}$ are the atomic
states that are not true in the model. Hence, for every $\left\langle \hat{T_{k}},\hat{F_{k}}\right\rangle \in\mathfrak{P}$,
we have an Ersatz Tarskian model $\mathcal{M}_{k}$.

Notice that $\left\langle x,y\right\rangle \not\in\mathbb{I}^{\mathcal{M}}$
iff there is some $\left\langle \hat{T_{k}},\hat{F_{k}}\right\rangle \in\mathfrak{P}$
such that $x\subseteq\hat{T_{k}}$ and $y\subseteq\hat{F_{k}}$. After
all, if $\left\langle x,y\right\rangle \not\in\mathbb{I}^{\mathcal{M}}$,
then our construction will add all the elements of $x$ and $y$ to
some pair in $\mathfrak{P}$ at some point. Hence, if our model $\mathcal{M}$
is a counterexample to some implication $\left\langle x,y\right\rangle $,
then one of the Ersatz Tarskian models $\mathcal{M}_{k}$ is also
a counterexample to $\left\langle x,y\right\rangle $. It follows
that $\mathcal{M}$ is a counterexample to an inference iff there
is a model $\mathcal{M}_{k}$ in our construction that is a counterexample
to that inference.
\end{proof}

If we put these two lemmas together, we can see that the inferences
that hold in all structural models are exactly the inferences that
are valid in first-order logic.

\begin{thm}
An inference is valid in classical first-order logic iff there is
no structural implication-space model that is a counterexample to
the inference. In other words, with $m$ being the set of all structural
implication space models, $\Gamma\sttstile{}{m}\Delta$ iff $\Gamma\models_{FOL}\Delta$.\label{thm:FOL-is-structural-imp-space}
\end{thm}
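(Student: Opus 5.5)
The plan is to combine Theorem~\ref{thm:FOL-maps-to-Ersatz-models} with Lemmas~\ref{lem:Ersatz-models-are-structural} and \ref{lem:Structural-models-are-sets-of-ersatz-models}, arguing each direction of the biconditional by contraposition. Throughout, let $m$ be the set of all structural models and $e$ the set of all Ersatz Tarskian models. By Theorem~\ref{thm:FOL-maps-to-Ersatz-models}, $\Gamma\models_{FOL}\Delta$ iff $\Gamma\sttstile{}{e}\Delta$. So it suffices to show that $m$ and $e$ counterexample exactly the same inferences.

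For the direction from $\Gamma\sttstile{}{m}\Delta$ to $\Gamma\models_{FOL}\Delta$, I use Lemma~\ref{lem:Ersatz-models-are-structural}, which gives $e\subseteq m$. Suppose $\Gamma\not\models_{FOL}\Delta$. Then some Ersatz Tarskian model counterexamples $\Gamma\vdash\Delta$. That model is structural, so $\Gamma\not\sttstile{}{m}\Delta$.

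For the converse, suppose some structural model $\mathcal{M}$ counterexamples $\Gamma\vdash\Delta$, i.e.\ $\left\llbracket \Gamma\right\rrbracket ^{\mathcal{M}}\not\dttstile{}{}\left\llbracket \Delta\right\rrbracket ^{\mathcal{M}}$. Lemma~\ref{lem:Structural-models-are-sets-of-ersatz-models} provides a set of Ersatz Tarskian models $\{\mathcal{M}_{k}\}$ such that $\mathcal{M}$ counterexamples an inference iff some $\mathcal{M}_{k}$ does. Hence some $\mathcal{M}_{k}$ counterexamples $\Gamma\vdash\Delta$. By the preceding Proposition, which says that a Tarskian model and its Ersatz model counterexample the same inferences, the Tarskian model underlying $\mathcal{M}_{k}$ counterexamples it too. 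So $\Gamma\not\models_{FOL}\Delta$.

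The main obstacle is making sure Lemma~\ref{lem:Structural-models-are-sets-of-ersatz-models} really delivers counterexamples at the level of sentences, not merely at the level of pairs of bearer sets, since its proof only checks membership in $\mathbb{I}$. I would close this gap as follows. Each $\mathcal{M}_{k}$ shares $\mathcal{M}$'s objects, properties and atomic interpretation. A sentential counterexample in $\mathcal{M}$ means that $\bigcup$ of the relevant adjoined role contains a pair $\left\langle x,y\right\rangle \notin\mathbb{I}^{\mathcal{M}}$. The lemma puts that pair into some partition $\left\langle \hat{T_{k}},\hat{F_{k}}\right\rangle$, so $\left\langle x,y\right\rangle \notin\mathbb{I}^{\mathcal{M}_{k}}$.

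The step that needs care is showing that $\left\langle x,y\right\rangle$ also lies in the corresponding role computed in $\mathcal{M}_{k}$, because roles depend on $\mathbb{I}$. Here I would use $\mathbb{I}^{\mathcal{M}}=\bigcap_{k}\mathbb{I}^{\mathcal{M}_{k}}$ together with monotonicity. This monotonicity holds in both models, given MO and the form of $\mathbb{I}^{\mathcal{M}_{k}}$. In monotonic frames the union of a role computed from generating pairs is upward closed, which lets me choose a generating pair below $\left\langle x,y\right\rangle$. If this fails in general, I would fall back on an induction on sentence complexity mirroring Lemma~\ref{lem:T-counterexamples-on-left-and-F-on-right}, showing that in a structural model counterexamples decompose into atomic counterexamples exactly as in the Ersatz case.
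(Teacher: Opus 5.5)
Your core argument is the paper's proof. Theorem~\ref{thm:FOL-maps-to-Ersatz-models} reduces classical validity to validity over Ersatz Tarskian models. Lemma~\ref{lem:Ersatz-models-are-structural} gives the direction from $\Gamma\sttstile{}{m}\Delta$ to $\Gamma\models_{FOL}\Delta$, and Lemma~\ref{lem:Structural-models-are-sets-of-ersatz-models} gives the converse; if you take that lemma at face value, you are done.

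The trouble is the repair you propose for the worry you rightly raise about that lemma, namely that its proof only reasons about pairs of bearer sets. Upward closure of $\bigcup\mathcal{R}(\mathsf{K})$ in a monotonic frame says that pairs lying above a generator belong to the role. It does not give you a generator below an arbitrary witness, and that step can fail. Take the structural frame in which $\langle\mathsf{G},\mathsf{D}\rangle\in\mathbb{I}$ iff $\mathsf{G}\cap\mathsf{D}\neq\emptyset$. Let $\mathsf{K}=\{\langle\{\mathsf{a},\mathsf{b}\},\emptyset\rangle,\langle\{\mathsf{a}\},\{\mathsf{b}\}\rangle\}$, which up to one overlapping pair is the compositional representative of the premisory role of $Fa\wedge(Fb\lor\neg Fb)$. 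Then $\mathsf{RSR}(\mathsf{K})\subseteq\mathsf{RSR}(\{\langle\{\mathsf{a}\},\emptyset\rangle\})$, so $\langle\{\mathsf{a}\},\emptyset\rangle\in\bigcup\mathcal{R}(\mathsf{K})$. It is a legitimate witness outside $\mathbb{I}$, yet it lies above no element of $\mathsf{K}$.

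The fix is easier than the route you sketch. For any representative $\mathsf{K}$, $\bigcup\mathcal{R}(\mathsf{K})\subseteq\mathbb{I}$ iff $\mathsf{K}\subseteq\mathbb{I}$, since both amount to $\langle\emptyset,\emptyset\rangle\in\mathsf{RSR}(\mathsf{K})$. On the compositional reading of the clauses, the representative for $\Gamma\vdash\Delta$ depends only on $\mathbb{O}$ and the interpretation of names and predicates, not on $\mathbb{I}$. (The quantifier clause needs that reading anyway, since substitution acts on bearers, not roles.) So you can pick the witness inside $\mathsf{K}$, and no monotonicity is needed.

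Better still, you can avoid relying on $\mathbb{I}^{\mathcal{M}}=\bigcap_{k}\mathbb{I}^{\mathcal{M}_{k}}$ at all. This matters because that identity can fail when $\mathbb{B}$ is infinite; the tree construction breaks at limit stages. Consider the frame with $\langle\mathsf{G},\mathsf{D}\rangle\in\mathbb{I}$ iff $\mathsf{G}\cap\mathsf{D}\neq\emptyset$ or $\mathbb{B}\setminus(\mathsf{G}\cup\mathsf{D})$ is finite. It satisfies CO, MO and CUT, and it contains every total partition while omitting $\langle\emptyset,\emptyset\rangle$.

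Instead, use CO directly. A generator $\langle\mathsf{C},\mathsf{D}\rangle\in\mathsf{K}\setminus\mathbb{I}^{\mathcal{M}}$ must have $\mathsf{C}\cap\mathsf{D}=\emptyset$. So the single partition $\langle\mathsf{C},\mathbb{B}\setminus\mathsf{C}\rangle$ gives an Ersatz model with the same $\mathsf{K}$ in which that generator still fails. That is all the direction from $\Gamma\models_{FOL}\Delta$ to $\Gamma\sttstile{}{m}\Delta$ requires.
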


\begin{proof}
By Theorem~\ref{thm:FOL-maps-to-Ersatz-models}, an inference is
valid in classical first-order logic iff there is no Ersatz Tarskian
implication-space model that is a counterexample. By Lemma~\ref{lem:Ersatz-models-are-structural},
if there is an Ersatz Tarskian model that is such a counterexample,
then there is a structural model that is a counterexample. Hence,
if the inference is classically invalid, then there is a structural
counterexample. For the other direction, by Lemma~\ref{lem:Structural-models-are-sets-of-ersatz-models},
if there is a structural model that is a counterexample to the inference,
then there is an Ersatz Tarskian model that is a counterexample. So,
if there is a structural counterexample, then the inference is classically
invalid.
\end{proof}

This allows us to characterize the implication-space models that yield
first-order classical logic without appealing to the models of Tarskian
model theory. The set of structural models will do the job. Since
these models provide counterexamples to all material inferences, we
must restrict ourselves to a subset of these structural models in
order to validate any material inferences. Moreover, we will have
to include some models that are not structural but in which monotonicity
fails in order to validate defeasible, material inferences. Thus,
the question arises how we can validate inferences that allow us to
meet desiderata like MOF and the others from above, while keeping
our consequence relation supraclassical.

A general answer to this question starts by noting that the consequence
relation of a set of implication-space models is the intersection
of the consequence relations of all these models, i.e., the singletons
of these models. Hence, the consequence relation of a set of models
is supraclassical iff the consequence relation of each of these models
is supraclassical. Besides all the classically valid inferences, a
model may validate further inferences; and with respect to these further
inferences monotonicity and transitivity can fail.

In Tarskian model theory, no analogous situation can arise because
there isn't any Tarskian model that is a counterexample to all classically
invalid inferences. In implication-space semantics, however, there
is such a model.

\subsection{One Model is Enough for FOL}

There is one implication-space model, for a given language, that counterexamples
all and only the inferences that are invalid in classical first-order
logic. I call it the canonical FOL model.

\begin{defn}[Canonical FOL implication-space model, $\mathcal{M}_{C}$]
 The canonical implication space model for a language, $\mathfrak{L}$,
is the implication-space model in which $\mathbb{O}$ are the natural
numbers assigned to names in their alphabetical order, $\mathbb{P}$
are sets of \emph{n}-tuples of natural numbers assigned to \emph{n}-ary
predicates in their alphabetical order (so that atomic sentences are
mapped to roles of individual bearers), and $\left\langle \mathsf{G},\mathsf{D}\right\rangle \in\mathbb{I}$
iff $\mathsf{G}\cap\mathsf{D}\neq\emptyset$.
\end{defn}

\begin{thm}
$\Gamma\models_{FOL}\Delta$ iff $\left\llbracket \Gamma\right\rrbracket ^{\mathcal{M}_{C}}\dttstile{}{}\left\llbracket \Delta\right\rrbracket ^{\mathcal{M}_{C}}$.\label{thm:Canonical-model-is-FOL}
\end{thm}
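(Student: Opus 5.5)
The plan is to reduce the claim to Theorem~\ref{thm:FOL-is-structural-imp-space} and Lemma~\ref{lem:Structural-models-are-sets-of-ersatz-models}. First I will check that the canonical model $\mathcal{M}_{C}$ is structural. CO holds by definition, since $\left\langle \mathsf{G},\mathsf{D}\right\rangle \in\mathbb{I}$ iff $\mathsf{G}\cap\mathsf{D}\neq\emptyset$. MO is immediate, because enlarging both sides preserves a nonempty intersection. For CUT, suppose $\mathsf{G}\cap\mathsf{D}=\emptyset$ and let $\mathsf{a}$ be any bearer. Then $\mathsf{a}$ cannot lie in both $\mathsf{G}$ and $\mathsf{D}$, so at least one of $\left\langle \mathsf{G}\cup\{\mathsf{a}\},\mathsf{D}\right\rangle $ and $\left\langle \mathsf{G},\mathsf{D}\cup\{\mathsf{a}\}\right\rangle $ still has empty intersection. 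Given structurality, Theorem~\ref{thm:FOL-is-structural-imp-space} yields the left-to-right direction at once: if $\Gamma\models_{FOL}\Delta$, no structural model is a counterexample, so in particular $\left\llbracket \Gamma\right\rrbracket ^{\mathcal{M}_{C}}\dttstile{}{}\left\llbracket \Delta\right\rrbracket ^{\mathcal{M}_{C}}$.

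For the right-to-left direction, I will assume $\Gamma\not\models_{FOL}\Delta$ and show that $\mathcal{M}_{C}$ is a counterexample. Take a Tarskian countermodel. By the L\"owenheim--Skolem theorem, together with the usual trick of reinterpreting names, I expect to obtain a countermodel whose domain is the natural numbers and whose names denote as in $\mathcal{M}_{C}$. This is plausible because every object should be named in a term model built from a Henkin-style construction. The main obstacle is exactly this step. The quantifier clause of $\mathcal{M}_{C}$ ranges over all of $\mathbb{O}=\mathbb{N}$, with the fixed denotations of names, while an arbitrary countermodel may have a different domain or may identify denotations. So I must ensure that the countermodel's atomic truths can be encoded as a partition $\left\langle \hat{T}_{k},\hat{F}_{k}\right\rangle $ of the bearers of $\mathcal{M}_{C}$ itself.

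If that works, the resulting partition $\left\langle \hat{T}_{k},\hat{F}_{k}\right\rangle $ satisfies $\left\langle \hat{T}_{k},\hat{F}_{k}\right\rangle \not\in\mathbb{I}$, because the two sets are disjoint. Lemma~\ref{lem:Structural-models-are-sets-of-ersatz-models} then shows that the set of Ersatz Tarskian models attached to $\mathcal{M}_{C}$ includes every partition of $\mathbb{B}$. Concretely, I will take the orderings in the lemma's construction and observe that CUT always gives the ``both'' case, since $\mathbb{I}$ contains only overlapping pairs. The tree therefore branches fully, and $\mathfrak{P}$ consists of all partitions of $\mathbb{B}$. Hence $\mathcal{M}_{C}$ counterexamples every inference that is counterexampled by the Ersatz Tarskian model of any Tarskian model on domain $\mathbb{N}$ with the canonical name assignment. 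The Proposition that a Tarskian model and its Ersatz model counterexample the same inferences then transfers the Tarskian counterexample to $\mathcal{M}_{C}$.

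To prepare for the obstacle, I would first try to prove a substitution fact: the value of a sentence in an Ersatz model depends only on which atomic states lie in $\hat{T}$. That fact would let me pass from an arbitrary countably-domained countermodel to one living on $\mathcal{M}_{C}$'s fixed objects and properties.
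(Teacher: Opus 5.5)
Your structurality check for $\mathcal{M}_{C}$ is correct and matches the paper. So is your derivation of $\llbracket\Gamma\rrbracket^{\mathcal{M}_{C}}\dttstile{}{}\llbracket\Delta\rrbracket^{\mathcal{M}_{C}}$ from $\Gamma\models_{FOL}\Delta$ via Theorem~\ref{thm:FOL-is-structural-imp-space}. The converse, however, is not proved. Its only non-routine step is producing a Tarskian countermodel whose domain is $\mathbb{N}$, with every object denoted by exactly one name as in $\mathcal{M}_{C}$. That is the step you call the ``main obstacle'' and leave at ``I expect''. The tools you mention do not close it:
\begin{itemize}
\item L\"owenheim--Skolem only makes the domain countable. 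It neither names every object nor separates coreferring names, and the names occurring in $\Gamma\cup\Delta$ cannot simply be reinterpreted.
\item Your substitution fact concerns which atomic states lie in $\hat{T}$. The values of quantified sentences also depend on the domain and on what the names denote, and that is exactly what has to be matched.
\end{itemize}
Two ingredients are indispensable here, and both are absent from your plan. The first is that $\Gamma$ and $\Delta$ are finite, as the definition of $\dttstile{}{}$ presupposes. Were infinite sets allowed, the claim would fail: every object of $\mathcal{M}_{C}$ is named, so $\{Fa_{1},Fa_{2},\ldots\}$ entails $\forall xFx$ in $\mathcal{M}_{C}$ although not in classical logic. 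The second is that $\mathcal{L}$ has no identity.

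With both ingredients, the step is short. Take a countable countermodel with domain $D$. Choose a map $f$ from the names onto $D$ that agrees with the given denotations on the finitely many names in $\Gamma\cup\Delta$ and sends the infinitely many unused names onto $D$. Let the new model have the set of names (relabelled as $\mathbb{N}$ as in $\mathcal{M}_{C}$) as its domain, each name denoting itself, with $F$ interpreted as $\{\langle b_{1},\ldots,b_{n}\rangle\mid\langle f(b_{1}),\ldots,f(b_{n})\rangle\in\llbracket F\rrbracket\}$. Then $f$ is a surjective strong homomorphism, and an induction on identity-free formulas shows that all sentences in $\Gamma\cup\Delta$ keep their truth values. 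Equivalently, you can use the identity-free Henkin term model with the unused names as witnesses.

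From there your detour through the tree of Lemma~\ref{lem:Structural-models-are-sets-of-ersatz-models} works, since in $\mathcal{M}_{C}$ every branching point is a ``both'' case, but it is more than you need. The paper argues more directly. It takes an arbitrary structural counterexample supplied by Theorem~\ref{thm:FOL-is-structural-imp-space} and reletters it onto $\mathcal{M}_{C}$'s objects and properties. It then observes that a witness $\langle\mathsf{A},\mathsf{B}\rangle\notin\mathbb{I}$ must be disjoint by CO, and is therefore bad in $\mathcal{M}_{C}$ too. In effect, $\mathcal{M}_{C}$ carries the least implication relation satisfying CO. Applying that observation to the Ersatz model of your normalized countermodel would finish your argument. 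The paper's relettering step tacitly relies on the same normalization (names injective and covering the domain), so you have located the real crux. As written, though, your proposal does not discharge it.
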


\begin{proof}
By Theorem~\ref{thm:FOL-is-structural-imp-space} we know that
$\Gamma\not\models_{FOL}\Delta$ iff there is a structural implication-space
model that counterexamples the inference. Hence, it suffices to show
that $\left\llbracket \Gamma\right\rrbracket ^{\mathcal{M}_{C}}\not\dttstile{}{}\left\llbracket \Delta\right\rrbracket ^{\mathcal{M}_{C}}$
iff there is a structural implication-space model, $\mathcal{M}$,
such that $\left\llbracket \Gamma\right\rrbracket ^{\mathcal{M}}\not\dttstile{}{}\left\llbracket \Delta\right\rrbracket ^{\mathcal{M}}$.

Right-to-left: It suffices to show that the canonical model is structural.
(CO) is immediate from the definition of the canonical model: for
all $\mathsf{G},\mathsf{D}\subseteq\mathbb{B}$, if $\mathsf{G}\cap\mathsf{D}\neq\emptyset$,
then $\left\langle \mathsf{G},\mathsf{D}\right\rangle \in\mathbb{I}$.
For (MO), note that, for all $\mathsf{G},\mathsf{D},\mathsf{E},\mathsf{F}\subseteq\mathbb{B}$,
if $\mathsf{G}\cap\mathsf{D}\neq\emptyset$, $(\mathsf{G}\cup\mathsf{E})\cap(\mathsf{D}\cup\mathsf{F})\neq\emptyset$.
And for (CUT), note that, for all $\mathsf{G},\mathsf{D}\subseteq\mathbb{B}$
and $\mathsf{a}\in\mathbb{B}$, if $\mathsf{G}\cap\mathsf{D}=\emptyset$,
then $(\mathsf{G}\cup\{\mathsf{a}\})\cap\mathsf{D}=\emptyset$ or
$\mathsf{G}\cap(\mathsf{D}\cup\{\mathsf{a}\})=\emptyset$. 

Left-to-right: Suppose that there is a structural implication-space
model, $\mathcal{M}$, that maps all atomic sentences to roles of
individual bearers such that $\left\llbracket \Gamma\right\rrbracket ^{\mathcal{M}}\not\dttstile{}{}\left\llbracket \Delta\right\rrbracket ^{\mathcal{M}}$.
Let $\mathcal{M}'$ be like $\mathcal{M}$ except that the objects
are natural numbers and the properties are n-tuples of natural numbers
such that $\left\llbracket a\right\rrbracket ^{\mathcal{M}'}=\left\llbracket a\right\rrbracket ^{\mathcal{M}_{C}}$
for every name in $\mathfrak{L}$ and $\left\llbracket P\right\rrbracket ^{\mathcal{M}'}=\left\llbracket P\right\rrbracket ^{\mathcal{M}_{C}}$
for every predicate in $\mathfrak{L}$. Since $\mathbb{I}$ remains
constant up to ``relettering'' of the elements, $\mathcal{M}'$
and $\mathcal{M}$ counterexample the exact same inferences. By definition,
$\left\llbracket \Gamma\right\rrbracket ^{\mathcal{M}'}\not\dttstile{}{}\left\llbracket \Delta\right\rrbracket ^{\mathcal{M}'}$
means that $\bigcup(\stackrel[i=0]{n}{\sqcup}\mathtt{g}_{i}^{+}\thinspace\sqcup\stackrel[j=0]{m}{\sqcup}\mathtt{d}_{j}^{-})\not\subseteq\mathbb{I}$,
in $\mathcal{M}'$, for $\mathtt{g}_{i}^{+}$ being the premisory
roles for the elements of $\Gamma$ and $\mathtt{d}_{j}^{-}$ being
the premisory roles for the elements of $\Delta$. Let $\left\langle \mathsf{A},\mathsf{B}\right\rangle \not\in\mathbb{I}$
be a witness of $\bigcup(\stackrel[i=0]{n}{\sqcup}\mathtt{g}_{i}^{+}\thinspace\sqcup\stackrel[j=0]{m}{\sqcup}\mathtt{d}_{j}^{-})\not\subseteq\mathbb{I}$
in $\mathcal{M}'$. By (CO) for structural models, we know that, in
$\mathcal{M}'$, for all $\mathsf{G},\mathsf{D}\subseteq\mathbb{B}$,
if $\mathsf{G}\cap\mathsf{D}\neq\emptyset$, then $\left\langle \mathsf{G},\mathsf{D}\right\rangle \in\mathbb{I}$.
So $\mathsf{A}\cap\mathsf{B}=\emptyset$. Since $\left\llbracket p\right\rrbracket ^{\mathcal{M}'}=\left\llbracket p\right\rrbracket ^{\mathcal{M}_{C}}$
for all the bearers in the two models, $\mathsf{A}\cap\mathsf{B}=\emptyset$
also holds in $\mathcal{M}_{C}$. Therefore, $\left\langle \mathsf{A},\mathsf{B}\right\rangle \not\in\mathbb{I}$
in $\mathcal{M}_{C}$. Moreover, since the interpretations of complex
sentences are composed in the same way out of the same interpretations
of atomic sentences in both models, we know that $\left\langle \mathsf{A},\mathsf{B}\right\rangle \in\bigcup(\stackrel[i=0]{n}{\sqcup}\mathtt{g}_{i}^{+}\thinspace\sqcup\stackrel[j=0]{m}{\sqcup}\mathtt{d}_{j}^{-})$
in $\mathcal{M}_{C}$. So, $\bigcup(\stackrel[i=0]{n}{\sqcup}\mathtt{g}_{i}^{+}\thinspace\sqcup\stackrel[j=0]{m}{\sqcup}\mathtt{d}_{j}^{-})\not\subseteq\mathbb{I}$
in $\mathcal{M}_{C}$. Therefore, $\left\llbracket \Gamma\right\rrbracket ^{\mathcal{M}_{C}}\not\dttstile{}{}\left\llbracket \Delta\right\rrbracket ^{\mathcal{M}_{C}}$.
\end{proof}

This means that if all the models that we use to define consequence
have implication relations that are supersets of the implication relation
of the canonical FOL model, then our consequence relation is supraclassical.
Note that we can add arbitrary further implications to the implication
relation of the canonical FOL model. These further implications can
be chosen in the ways specified above to ensure that our theory obeys
the constraints MOF, DDT, DS, LC and doesn't underwrite the rules
$\forall R$ and $\forall L$.

\begin{thm}
There is a set of implications-space models, $m$, such that $\sttstile{}{m}$
meets all of the constraints: MOF, SCL, DDT, DS, LC and it provides
counterexamples to $\forall R$ and $\forall L$.
\end{thm}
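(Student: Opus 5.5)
The plan is to exhibit one suitable set $m$, built by enlarging the implication relation of the canonical FOL model $\mathcal{M}_{C}$. Two facts do most of the work. First, DDT, DS and LC hold in every set of implication-space models by Proposition~\ref{prop:DDT-DS-LC-hold}, so they need no further argument. Second, SCL holds for any $m$ all of whose members have implication relations extending that of $\mathcal{M}_{C}$. It then remains to choose the added implications so that MOF holds and $\forall R$ and $\forall L$ fail.

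For SCL I first check a monotonicity-in-$\mathbb{I}$ fact. Let two models share bearers and interpretations, with $\mathbb{I}\subseteq\mathbb{I}'$. Then $\mathtt{G}\dttstile{}{}\mathtt{D}$ in the first implies $\mathtt{G}\dttstile{}{}\mathtt{D}$ in the second.

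This step is the main obstacle. Roles are equivalence classes under $\mathsf{RSR}$, and enlarging $\mathbb{I}$ changes $\mathsf{RSR}$, so the canonical representatives $\bigcup\mathcal{R}(\cdot)$ can change as well. I would not compare roles across models. Instead I would work with the generating sets, as the proof of Theorem~\ref{thm:Canonical-model-is-FOL} implicitly does. Each clause builds a generating set from atomic pairs by pairwise unions (adjunction) and unions (symjunction, power-symjunction). Entailment can then be tested on the generating set $\mathsf{H}$ of $\bigsqcup\mathtt{g}_{i}^{+}\sqcup\bigsqcup\mathtt{d}_{j}^{-}$. By the argument of Proposition~\ref{prop:Frames-interpret-themselves}, $\bigcup\mathcal{R}(\mathsf{H})\subseteq\mathbb{I}$ iff $\langle\emptyset,\emptyset\rangle\in\mathsf{RSR}(\mathsf{H})$ iff $\mathsf{H}\subseteq\mathbb{I}$.

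The generating set $\mathsf{H}$ depends only on the bearers and the clauses, not on $\mathbb{I}$. I would verify this by induction on sentences, taking representative-independence of $\sqcup$ and $\sqcap$ as given by the paper. Given that, $\mathsf{H}\subseteq\mathbb{I}\subseteq\mathbb{I}'$ yields the claim. Combined with Theorem~\ref{thm:Canonical-model-is-FOL}, it shows that $\Gamma\models_{FOL}\Delta$ implies $\Gamma\sttstile{}{m}\Delta$.

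For the remaining constraints, take the canonical bearers and add finitely many implications, one model per feature, and let $m$ be the set of these models, possibly together with $\mathcal{M}_{C}$ itself.

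\begin{itemize}
\item[(1)] \emph{MOF.} Choose distinct atoms with bearers $\mathsf{a},\mathsf{b},\mathsf{d}$. Add $\langle\{\mathsf{a}\},\{\mathsf{b}\}\rangle$ to $\mathbb{I}$ but not $\langle\{\mathsf{a},\mathsf{d}\},\{\mathsf{b}\}\rangle$. The latter is not in the canonical $\mathbb{I}$ because the sets are disjoint, so the reasoning of Proposition~\ref{prop:MO-fails} applies.
\item[(2)] \emph{$\forall R$.} Add $\langle\emptyset,\{\mathsf{Fo_{1}}\}\rangle$ but not $\langle\emptyset,\{\mathsf{Fo_{2}}\}\rangle$. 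As in Proposition~\ref{prop:quantifier-rules-fail}, this gives $\sttstile{}{m}Fa_{1}$ but not $\sttstile{}{m}\forall xFx$.
\item[(3)] \emph{$\forall L$.} Add $\langle\{\mathsf{Fo_{1}}\},\{\mathsf{b}\}\rangle$ but not $\langle\{\mathsf{Fo_{1}},\mathsf{Fo_{2}}\},\{\mathsf{b}\}\rangle$, with $\mathsf{b}$ a bearer distinct from the $\mathsf{F}$-bearers.
\end{itemize}

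Using a single model that contains all these additions also works, since the added pairs and the withheld pairs do not conflict.

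Two points need checking. The positive implications must hold in every member of $m$, so either use a single model or add the same positive pairs to all members. The failures need only one counterexample model, which exists because the withheld pairs have disjoint components and so remain outside $\mathbb{I}$. For the quantifier cases there is one more detail. The name $a_{1}$ with $\llbracket a_{1}\rrbracket=\mathsf{o}_{1}$ must not occur in $\Gamma,\Delta$; this holds with $\Gamma=\emptyset$ and $\Delta$ an atom built from another name. Finally, I confirm that the adjunction generating $\llbracket\forall xFx\rrbracket^{+}$ contains the pair $\langle\{\mathsf{Fo}\mid\mathsf{o}\in\mathbb{O}\},\emptyset\rangle$, which is not in $\mathbb{I}$ together with $\{\mathsf{b}\}$. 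For $\forall R$, the power-symjunction contains $\langle\emptyset,\{\mathsf{Fo_{2}}\}\rangle\notin\mathbb{I}$.
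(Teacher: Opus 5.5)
Your proposal is correct and follows the paper's route. You enlarge $\mathbb{I}$ of $\mathcal{M}_{C}$ and get SCL from Theorem~\ref{thm:Canonical-model-is-FOL}, since enlarging $\mathbb{I}$ cannot create counterexamples; the paper merely asserts this step, and you justify it properly by testing entailment on the $\mathbb{I}$-independent generating sets. You take DDT, DS and LC from Proposition~\ref{prop:DDT-DS-LC-hold}, and witness MOF and the quantifier failures with added pairs; the only difference is that the paper lets the single pair $\langle\{\mathsf{Fa}\},\{\mathsf{Gb}\}\rangle$ do all three jobs via $Fa,Fc\not\sttstile{}{m}Gb$, $\forall xFx\not\sttstile{}{m}Gb$ and $Fa\not\sttstile{}{m}\forall xGx$.

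Do commit to the single combined model from your last paragraph, though. The ``one model per feature'' and ``together with $\mathcal{M}_{C}$'' variants you float earlier fail, because each added positive pair is an FOL-invalid atomic sequent that $\mathcal{M}_{C}$, and any member of $m$ lacking that pair, counterexamples.
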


\begin{proof}
Consider the model that is like $\mathcal{M}_{C}$ but in which we
add to the implication relation of the implication-frame the pair
$\left\langle \{\mathsf{F}\mathsf{a}\},\{\mathsf{Gb}\}\right\rangle \in\mathbb{I}$,
for some distinct objects $\mathsf{a,b,c,d}$ and distinct properties
$\mathsf{F,G}$. To be explicit, we add nothing more; in particular,
we add neither $\left\langle \{\mathsf{Fa},\mathsf{Fc}\},\{\mathsf{Gb}\}\right\rangle $
nor $\left\langle \{\mathsf{Fa}\},\{\mathsf{Gd}\}\right\rangle $.
Let $\left\llbracket Fa\right\rrbracket =\mathtt{Fa}=\mathcal{R}(\mathsf{Fa})$,
and analogously for $\mathsf{Fc},\mathsf{Gb},\mathsf{Gd}$. Let $m$
be the singleton of that model.

By Proposition~\ref{prop:Frames-interpret-themselves}, $\mathtt{Fa}\dttstile{}{}\mathtt{Gb}$
but neither $\mathtt{Fa},\mathtt{Fc}\not\dttstile{}{}\mathtt{Gb}$
nor $\mathtt{Fa}\not\dttstile{}{}\mathtt{Gd}$. Hence, $Fa\sttstile{}{m}Gb$
but neither $Fa,Fc\sttstile{}{m}Gb$ nor $Fa\sttstile{}{m}Gd$. Hence,
implications in $m$ are nonmonotonic and, thus, MOF holds. Since
we merely increased the extension of $\mathbb{I}$, if our model is
a counterexample to an inference, then the canonical FOL implication-space
model is also a counterexample to that inference. Hence, given Theorem~\ref{thm:Canonical-model-is-FOL},
if $\Gamma\models_{FOL}\Delta$, then $\Gamma\sttstile{}{m}\Delta$.
So, the implications in $m$ are supraclassical and, thus, SCL holds.
By Proposition~\ref{prop:DDT-DS-LC-hold}, DDT, DS, and LC hold in
all implication-space models.

As in Proposition~\ref{prop:quantifier-rules-fail} above, it follows
from $Fa,Fc\not\sttstile{}{m}Gb$ that $\forall xFx\not\sttstile{}{m}Gb$.
Hence, we have a counterexample to $\forall L$. Similarly, from $Fa\not\sttstile{}{m}Gd$
it follows that $Fa\not\sttstile{}{m}\forall xGx$. So, we have a
counterexample to $\forall R$.
\end{proof}

We have now seen that the implication-space semantics formulated above
can meet all of our desiderata. We thus have an inferentialist semantic
theory that can validate all inferences that are valid in classical
first-order logic but also include material, defeasible inferences.
That was the goal of the paper.

\section{Conclusion}

In this paper, I have shown how to extend implication-space semantics
to include a first-order universal quantifier. In this framework,
we can model consequence relations that are nonmonotonic (MOF), supraclassical
(SCL), obey the deduction-detachment theorem (DDT), disjunction simplification
(DS), and in which conjunction behaves in a multiplicative way on
the left (LC). Since such a theory obeys MOF, SCL, and DDT, multiplicative
cut must fail in it. And since it obeys MOF, DS, and LC, it must be
hyper-intensional. The version of implication-space semantics that
I have formulated has all these properties (if one chooses one's set
of models appropriately).

To the best of my knowledge, there is no extant theory that has this
combination of properties; and these are the properties that advocates
of implication-space semantics require on philosophical grounds. I
have not considered the question whether advocates of implication-space
semantics are correct in accepting the philosophical views that call
for such a combination of properties in a semantic theory. Rather,
I have provided a a semantic theory that has these properties, and
I hope that it can help us to understand and evaluate the philosophical
motivations behind it more clearly.

\bibliographystyle{apalike}
\bibliography{Hlobil-QISS-bibliography}

\end{document}